\renewcommand{\\}{\vspace{3mm}}
\providecommand{\aut}{\mathop{\rm Aut \,}\nolimits}
\providecommand{\sym}{\mathop{\rm Sym \,}\nolimits}
\providecommand{\Wr}{\mathop{\rm Wr}\nolimits}
\newcommand{\Fix}{\mathop{\rm Fix}\nolimits}
\newcommand{\N}{\mathbb{N}}
\newcommand{\Path}{\mathcal{P}}
\newcommand{\gpa}{M} 
\newcommand{\gpb}{N} 
\newcommand{\sta}{X} 
\newcommand{\stb}{Y} 
\newcommand{\pta}{x} 
\newcommand{\ptb}{y} 
\newcommand{\cda}{m} 
\newcommand{\cdb}{n} 
\newcommand{\T}{T} 
\newcommand{\col}{\mathcal L} 
\newcommand{\altcol}{\mathcal \col'} 
\newcommand{\VV}{V}
\newcommand{\VVsubset}{\Phi}
\newcommand{\VValt}{\Delta}
\newcommand{\va}{p}
\newcommand{\vb}{q}
\newcommand{\ArcsFrom}{A}
\newcommand{\ColouringInducedPerm}[2]{\theta(#1, #2)}
\newcommand{\Universal}[3]{\mathcal{U}_{#3}(#1, #2)}
\newcommand{\Boxproduct}[3]{#1 \boxtimes_{#3} #2}
\newcommand{\Va}{V_{\sta}}
\newcommand{\Vb}{V_{\stb}}
\theoremstyle{plain}
\newtheorem{theorem}{Theorem}
\newtheorem*{theorem*}{Theorem}
\newtheorem{lemma}[theorem]{Lemma}
\newtheorem{corollary}[theorem]{Corollary}
\newtheorem*{corollary*}{Corollary}
\newtheorem{proposition}[theorem]{Proposition}
\theoremstyle{definition}
\newtheorem{definition}[theorem]{Definition}
\newtheorem*{definition*}{Definition}
\newtheorem{remark}[theorem]{Remark}
\newtheorem*{example*}{Example}
\newtheorem*{remark*}{Remark}
\title{\bf A product for permutation groups and topological groups}
\author{Simon M.~Smith}
\address{School of Mathematics and Physics, University of Lincoln, Lincoln, Lincolnshire, U.K.}
\email{SiSmith@lincoln.ac.uk}
\thanks{This research was partially supported by an Australian Research Council Discovery Early Career Researcher Award (project number DE130101521)}
\begin{document}
\maketitle
\markboth{\textsc{A product for permutation groups}}{\textsc{A product for permutation groups}}

\begin{tikzpicture}[remember picture,overlay]
\node[anchor=north east] at (current page.north east) {\parbox{3in}{\flushright Published in: Duke Math. J. Volume 166, Number 15 (2017), 2965-2999.\\ doi:10.1215/00127094-2017-0022}};
\end{tikzpicture}

\begin{abstract}
We introduce a new product for permutation groups. It takes as input two permutation groups, $\gpa$ and $\gpb$, and produces an infinite group $\Boxproduct{\gpa}{\gpb}{}$ which carries many of the permutational
properties of $\gpa$. Under mild conditions on $\gpa$ and $\gpb$ the group $\Boxproduct{\gpa}{\gpb}{}$ is simple.

As a permutational product, its most significant property is the following: $\Boxproduct{\gpa}{\gpb}{}$ is primitive if and only if $\gpa$ is primitive but not regular, and $\gpb$ is transitive.
Despite this remarkable similarity with the wreath product in product action, $\Boxproduct{\gpa}{\gpb}{}$ and $\gpa \Wr \gpb$ are thoroughly dissimilar.

The product provides a general way to build exotic examples of non-discrete, simple, totally disconnected, locally compact, compactly generated topological groups from discrete groups.

We use this to obtain the first construction of uncountably many pairwise non-isomorphic simple topological groups that are totally disconnected, locally compact, compactly generated and non-discrete. The groups we construct all contain the same compact open subgroup.
The analogous result for discrete groups was proved in 1953 by Ruth Camm.

To build the product, we describe a group $\Universal{\gpa}{\gpb}{}$ that acts on a biregular tree $T$. This group has a natural universal property and is 
a generalisation of
the iconic universal group construction of
Marc Burger and Shahar Mozes
for locally finite regular trees.
\end{abstract}

%
%
\section{Introduction}

We introduce a new product for permutation groups, which we call the {\em box product}, and describe some of its striking properties.
Our new product is non-associative.
It takes two permutation groups $\gpa \leq \sym(\sta)$ and $\gpb \leq \sym(\stb)$ such that $|\sta|, |\stb| > 1$, and yields an infinite permutation group $\Boxproduct{\gpa}{\gpb}{}$.
The group $\Boxproduct{\gpa}{\gpb}{}$ is simple under very mild conditions on $\gpa$ and $\gpb$ (see Theorem~\ref{theorem:simplicity}), and it enjoys a universal property (see Theorem~\ref{theorem:product_as_aut_T}(\ref{item:universal})).

The
group $\Boxproduct{\gpa}{\gpb}{}$ inherits  permutational properties from $\gpa$.
For example, consider the important permutational property of {\em primitivity}. Because primitive actions are minimal, products typically do not preserve primitivity. The iconic exception to this is the wreath product in its product action:

\begin{itemize}
\item
	{\em $\gpa \Wr \gpb$ is primitive in its product action on $\sta^{\stb}$ if and only if $\gpa$ is primitive and not regular on $\sta$, and $\gpb$ is transitive and finite.}
\end{itemize}

\noindent Because of this property, the wreath product is the primary tool for building new primitive groups from other primitive groups. It is fundamental to the classification of the finite primitive permutation groups (the O'Nan--Scott Theorem). Compare the above with the following
remarkable
result for the box product:

\begin{itemize}
\item
	{\em $\Boxproduct{\gpa}{\gpb}{}$ is primitive in its natural action if and only if $\gpa$ is primitive and not regular on $\sta$, and $\gpb$ is transitive} (see Theorem~\ref{thm:PermutationalProperties}).
\end{itemize}

\noindent The groups $\gpa \Wr \gpb$ and $\Boxproduct{\gpa}{\gpb}{}$  are thoroughly dissimilar (for example, $S_3 \Wr S_2$ has order $72$ and $\Boxproduct{S_3}{S_2}{}$ has order $2^{\aleph_0}$).\\

In their groundbreaking work on locally compact groups acting on trees \cite{BurgerMozes}, Marc Burger and Shahar Mozes introduce a construction that is now commonly referred to as a {\em universal group with prescribed local action}. Given $F \leq \sym(X)$, a regular tree $T$ whose vertices all have valency $|X|$, and a subgroup $H \leq \aut T$, 
we say that the {\em local action} of $H$ is $F$, or that $H$ is {\em locally-$F$}, if for every vertex $v$ the permutation group induced by the stabiliser $H_v$ on the vertices adjacent to $v$ in $T$ is permutation isomorphic to $F$.

Burger and Mozes' construction (see \cite[Section 3.2]{BurgerMozes}) is a group $U(F) \leq \aut T$ that has local action $F$. Moreover, if $F$ is transitive, then the group $U(F)$ contains a permutationally isomorphic copy of every vertex transitive locally-$F$ subgroup of $\aut T$ (see \cite[Proposition 3.2.2]{BurgerMozes}), and so it is the {\em universal locally-$F$ group}. The group $U(F)$ is
a vertex transitive subgroup of $\aut T$, which is a totally disconnected group under the topology of pointwise convergence. Burger and Mozes' paper only considers the case where $F$ has finite degree (that is, when $\sta$ is finite),
but the above also holds if $F$ is infinite.
If the degree of $F$ is finite then $T$ is locally finite and $U(F)$ is closed and locally compact, and $U(F)$ has a simple subgroup of index $2$ whenever $F$ is transitive and generated by point stabilisers (\cite[Proposition 3.2.1]{BurgerMozes}). These simple subgroups form an important family of examples of compactly generated, totally disconnected, locally compact, simple, non-discrete groups.

To prove the existence of the box product, 
we generalise Burger and Mozes' iconic universal group construction, from regular trees to biregular trees.
We construct a group $\Universal{\gpa}{\gpb}{}$ which acts on a biregular tree $T$ which need not have finite valency.
The action of $\Universal{\gpa}{\gpb}{}$ on $T$ is {\em locally-($\gpa, \gpb$)}; that is, the stabiliser of any vertex $v$ induces either $\gpa$ or $\gpb$ on the neighbours of $v$. When $\gpa$ and $\gpb$ are transitive, the group $\Universal{\gpa}{\gpb}{}$ contains
a permutationally 
isomorphic copy of every other locally-($\gpa, \gpb$) group, and so it is
the {\em universal locally-($\gpa, \gpb$) group}. The groups $U(F)$ then arise as a special case of this construction, with $U(F)$ and $\Universal{F}{\sym(2)}{}$ isomorphic as topological groups
(see Remark~\ref{rem:topo_iso}).
Our results then imply that for $F$ nontrivial and closed (in its permutation topology), $U(F)$ is locally compact (in its permutation topology) if and only if every point stabiliser in $F$ is compact.
Moreover, if these conditions hold, then $U(F)$ is compactly generated whenever $F$ is compactly generated and transitive.\\

The group $\Boxproduct{\gpa}{\gpb}{}$ is the permutation group induced by $\Universal{\gpa}{\gpb}{}$ in its (faithful) action on one of the parts of the bipartition of the vertices of the biregular tree $T$. As topological groups under the permutation topology, $\Boxproduct{\gpa}{\gpb}{}$ is topologically isomorphic to $\Universal{\gpa}{\gpb}{}$. If $\gpa$ and $\gpb$ are also thought of as topological groups under their respective permutation topologies, the box product of $\gpa$ and $\gpb$ inherits topological properties from $\gpa$ but, importantly, it does not inherit discreteness. Its main topological properties 
are established in
Theorems~\ref{theorem:product_as_aut_T}, \ref{theorem:simplicity} and \ref{thm:TopoProperties2}--\ref{thm:discrete}, and can be summarised as follows.

\begin{theorem} \label{thm:summary} Suppose $\gpa \leq \sym(\sta)$ and $\gpb \leq \sym(\stb)$ are permutation groups 
such that $|\sta|, |\stb| > 1$,
with $\gpa$ or $\gpb$ nontrivial. Then the following hold.
\begin{enumerate}
\item
	$\Universal{\gpa}{\gpb}{} \leq \aut T$ is locally-($\gpa$,$\gpb$);
\item
	If $\gpa$ and $\gpb$ are transitive and $H \leq \aut T$ is locally-($\gpa$,$\gpb$), then $H$ is conjugate in $\aut T$ to some subgroup of $\Universal{\gpa}{\gpb}{}$;
\item
	If $\gpa$ and $\gpb$ are closed then $\Universal{\gpa}{\gpb}{}$ is closed;
\item
	If $\gpa$ and $\gpb$ are generated by point stabilisers, then $\Universal{\gpa}{\gpb}{}$ is simple if and only if $\gpa$ or $\gpb$ is transitive;
\item
	If $\gpa$ and $\gpb$ are closed then $\Universal{\gpa}{\gpb}{}$ is locally compact if and only if all point stabilisers in $\gpa$ and $\gpb$ are compact;
\item
	If $\gpa$ and $\gpb$ are closed and compactly generated with compact point stabilisers and only finitely many orbits, and $\gpa$ or $\gpb$ is transitive, then $\Universal{\gpa}{\gpb}{}$ is compactly generated;
\item
	$\Universal{\gpa}{\gpb}{}$ is discrete if and only if $\gpa$ and $\gpb$ are semi-regular.
\end{enumerate}
\end{theorem}

In the final section of the paper, we turn our attention to the class $\mathcal{S}$ of non-discrete, totally disconnected, locally compact, topologically simple 
groups.\footnote{
The definition of $\mathcal{S}$ given here corresponds to that given in \cite{CapraceDeMedts}. Readers should note that some authors use $\mathcal{S}$ to denote the class of {\em compactly generated}, non-discrete, totally disconnected, locally compact, topologically simple groups.
}

We use the box product to construct a family $\mathcal{F}$ of 
abstractly simple groups.
This family contains $2^{\aleph_0}$ pairwise non-isomorphic examples of 
compactly generated groups in $\mathcal{S}$,
and hence gives an affirmative answer to the following question:
{\em 
do there exist uncountably many pairwise non-isomorphic compactly generated groups in $\mathcal{S}$?
}

This question is highlighted in a paper by Pierre-Emmanuel Caprace and Tom De Medts (\cite{CapraceDeMedts}).
The analogous result for discrete groups is due to Ruth Camm (\cite{Cam53}), who proved
in 1953
that there is a continuum of non-isomorphic simple 2-generated groups.

The set of compactly generated groups in $\mathcal{S}$ is known to contain at most $2^{\aleph_0}$ topological group isomorphism types (because all groups in $\mathcal{S}$ are Polish groups), and so it follows that there are precisely $2^{\aleph_0}$ topological group isomorphism types of compactly generated groups in $\mathcal{S}$.

In \cite{CapraceDeMedts}, Pierre-Emmanuel Caprace and Tom De Medts introduce a concept called {\em Lie-reminiscent}. Two groups are {\em locally isomorphic} if they contain isomorphic compact open subgroups, and a group $G$ in $\mathcal{S}$ is {\em Lie-reminiscent} if, for any topologically simple group $H$ in $\mathcal{S}$ which is locally isomorphic to $G$, we have that $H$ is isomorphic to $G$. In \cite{CapraceDeMedts}, the authors find some sufficient conditions for $G \in \mathcal{S}$ to fail to be Lie-reminiscent. This article demonstrates a dramatic failure of the Lie-reminiscent property: $\mathcal{F}$ contains $2^{\aleph_0}$ groups in $\mathcal{S}$ that all share a common compact open subgroup.\\

In Remark~\ref{remark:construction} we outline a general method for using the box product to construct exotic examples of 
compactly generated groups in $\mathcal{S}$.
For example, one can take $\gpa$ to be a finitely generated simple group (with some additional properties) and $\gpb$ to be finite, and
the resulting box product of $\gpa$ and $\gpb$
inherits many of the properties of $\gpa$ but is not discrete. Since the class of finitely generated simple groups is known to contain many unusual groups (see  \cite[Theorem C]{obraztsov96}, \cite[Theorem 28.7]{olshanski} or \cite[Theorem 1.1]{osin10} for example) we see, for the first time, that the class of simple, totally disconnected, locally compact, compactly generated and non-discrete groups is similarly broad.

The results in this paper demonstrate that, for a systematic study of $\mathcal{S}$, the isomorphism relation is too fine. Perhaps the relation of local isomorphism is more suitable.
The following question arises naturally at this point: Do there exist uncountably many local isomorphism types of compactly generated groups in $\mathcal{S}$?

%
%
\section{Preliminaries}
\label{section:preliminaries} 

\subsection{Permutation groups} Let $\VV$ be a non-empty set, and suppose $G$ acts on $\VV$. If $\pta \in \VV$ and $g \in G$, we denote the image of $\pta$ under $g$ by $g\pta$, thus following the convention that our permutations act from the left. This notation extends naturally to sets, so if $\VVsubset \subseteq \VV$ then $g\VVsubset$ is the image $\{g\pta : \pta \in \VVsubset\}$. The {\em setwise stabiliser} of $\VVsubset$ is the group $G_{\{\VVsubset\}}:=\{g \in G : g\VVsubset = \VVsubset\}$, and the {\em pointwise stabiliser} is the group $G_{(\VVsubset)} := \{g \in G : g\pta = \pta,  \forall \pta \in \VVsubset\}$. If $\Lambda$ consists of a single element $\pta$, then the setwise and pointwise stabilisers of $\Lambda$ coincide; this group is called the {\em stabiliser} of $\pta$ and is denoted by $G_{\pta}$. The set $G\pta := \{g\pta : g \in G\}$ denotes the {\em orbit} of $\pta$ (under $G$), and $G$ is said to be {\em transitive} if $\VV$ consists of a single orbit. If $\VVsubset$ is an orbit of $G$, then $G$ induces a subgroup of $\sym(\VVsubset)$ which we denote by $G \big|_{\VVsubset}$. The orbits of point stabilisers in $G$ are called {\em suborbits} of $G$.

The action of $G$ gives rise to a homomorphism from $G$ to the group $\sym(\VV)$ of all permutations of $\VV$. If this homomorphism is injective, then $G$ is said to be acting {\em faithfully}; when this occurs we will often consider $G$ to be a subgroup of $\sym(\VV)$, identifying $G$ with its image in $\sym(\VV)$. Two permutation groups $G \leq \sym(\VV)$ and $H \leq \sym(\VValt)$ are {\em permutation isomorphic} if there exists a bijection $\phi: \VV \rightarrow \VValt$ such that the map $g \mapsto \phi g \phi^{-1}$ is an isomorphism from $G$ to $H$.
The {\em degree} of $G \leq \sym(\VV)$ is $|\VV|$; we say that  $G$ is of {\em nontrivial degree} if $|\VV| > 1$.

A transitive group $G \leq \sym(\VV)$ is {\em primitive} on $\VV$ if the only $G$-invariant equivalence relations on $\VV$ are the trivial relation (each element in $\VV$ is related only to itself) and the universal relation (each element in $\VV$ is related to every element in $\VV$). If $G$ is transitive, then it is primitive if and only if every point-stabiliser $G_{\pta}$ is a maximal subgroup of $G$. A permutation group is {\em semi-regular} if every point stabiliser is trivial; a  {\em regular} permutation group is transitive and semi-regular.

\subsection{Graphs} In this paper a graph $\Gamma$ consists of a set $V\Gamma$ and a set $E\Gamma$ of two-element subsets of $V\Gamma$. The elements in $V\Gamma$ are called the {\em vertices} of $\Gamma$, and the elements of $E\Gamma$ the {\em edges}. The graph is {\em nontrivial} if $E\Gamma$ is non-empty. If two distinct vertices $v$ and $w$ belong to the same edge, they are said to be {\em adjacent}. An {\em arc} in $\Gamma$ is an ordered pair of adjacent vertices, and the set of all arcs is denoted by $A\Gamma$. Thus, our graphs contain no loops or multiple edges, and between any two adjacent 
vertices
there are two arcs, one in each direction. We denote the automorphism group of $\Gamma$ by $\aut \Gamma$.

If $a \in A\Gamma$, we denote by $o(a)$ and $t(a)$ the vertices such that $a = (o(a), t(a))$, and by $\overline{a}$ the arc $(t(a), o(a))$. We will sometimes write the edge $\{o(a), t(a)\}$ as $\{a, \overline{a}\}$. If $v$ is a vertex in $\Gamma$, then $\ArcsFrom(v) := \{a \in A\Gamma : o(a) = v\}$ denotes those arcs originating from $v$, while $\overline{\ArcsFrom}(v) :=  \{a \in A\Gamma : t(a) = v\}$ denotes those arcs that terminate at $v$. We denote the set of vertices adjacent to $v$ by $B(v)$. These notational conventions extend to sets of vertices, so for example if $W \subseteq V\Gamma$ then
$\ArcsFrom(W) := \{a \in A\Gamma : o(a) \in W\}$.

The {\em valency} of $v$ is the cardinal $|B(v)|$; if all valencies are finite the graph is {\em locally finite}. A
{\em graph path}
is a series of distinct vertices $v_0 v_1 \ldots v_n$ such that $v_i \in B(v_{i-1})$ for all integers $i$ satisfying $1 \leq i \leq n$; the {\em length} of this 
graph path is $n$. A graph path
that consists of a single vertex is called {\em trivial} or sometimes {\em empty}. Two vertices are connected if there is a
graph path
between them, and the {\em distance} between two connected vertices $v, w$, which we denote by $d(v, w)$, is the length of the shortest
graph path
between them; if two vertices are not connected then their distance is infinite.
A graph path between vertices $v, w$ that is of length $d(v,w)$ is called a {\em geodesic}.
A graph is {\em connected} if the distance between any two vertices is finite. A {\em cycle} is a series of vertices $v_0 v_1 \ldots v_n v_0$ such that $n > 1$ and $v_0 v_1 \ldots v_n$ and $v_1 \ldots v_n v_0$ are nontrivial
graph paths.
 A {\em tree} is a connected graph that contains no cycles. In a tree $T$, there is a unique
graph path between any two vertices $v, w \in VT$ which we denote by $[v, w]_T$. Note that a graph path in a tree is necessarily a geodesic. If we wish to exclude $w$, we write $[v, w)_T := [v, w]_T \setminus \{w\}$.

If $W$ is a set of vertices in a graph $\Gamma$, we denote by $\Gamma \setminus W$ the subgraph of $\Gamma$ induced on $V\Gamma \setminus W$. On the other hand, if $W$ is a set of edges of $\Gamma$, then $\Gamma \setminus W$ denotes the graph on $V\Gamma$ with edge set $E\Gamma \setminus W$. If $\Gamma$ is connected, and
$\Gamma \setminus \{v\}$ is disconnected for some $v \in V\Gamma$, then $\Gamma$ is said to have {\em connectivity one} and $v$ is called a {\em cut vertex}.
The connected subgraphs of $\Gamma$ that are maximal subject to the condition that they do not have connectivity one are called {\em lobes}.

If $\Gamma$ is vertex transitive, has connectivity one and is not a tree, then every vertex is a cut vertex and there is a tree $T_\Gamma$, called the {\em block-cut-vertex tree} of $\Gamma$, which determines the structure of $\Gamma$. The block-cut-vertex tree is defined as follows. Let $L$ be a set in bijective correspondence with the set of lobes of $\Gamma$. Since $\aut \Gamma$ acts on the set of lobes of $\Gamma$, there is an induced action of $\aut \Gamma$ on $L$. The vertex set of $T_\Gamma$ is the union $L \cup V\Gamma$, and $v \in VT_\Gamma$ and $\ell \in L$ are adjacent in $T_\Gamma$ if and only if $v$ lies in the lobe associated with $\ell$. The action of $\aut \Gamma$ on $V \cup L$ preserves this relation, and so $\aut \Gamma$ acts on $T_\Gamma$. This action is easily seen to be faithful, and we will frequently consider $\aut \Gamma$ to be a subgroup of $\aut T_\Gamma$. Figure~\ref{figure:HEmbeds} shows a graph with connectivity one and its block-cut-vertex tree.

The following result
is a special case of a theorem about directed graphs, and will be useful to us in Section~\ref{section:permutational}.

\begin{theorem}[{\cite[Theorem 2.5]{me:prim_directed_graphs}}]
\label{theorem:MyImprimitiveTreeAction}
Let $G$ be a vertex-transitive group of automorphisms of a connectivity-one graph $\Gamma$, whose lobes have at least three vertices, and let $T$ be the block-cut-vertex tree of $\Gamma$. If there exist distinct vertices
$v, w \in V\Gamma$ such that, for some vertex $x$ in the $T$-geodesic $(v, w)_T$,
\[G_{v, x} = G_{w, x},\]
then $G$ does not act primitively on $V\Gamma$.
\end{theorem}

A {\em ray} (also called a {\em half-line}) in $\Gamma$ is a sequence of distinct vertices $R = \{v_i\}_{i \in \N}$ such that for any $n \in \N$ we have $v_1 v_2 \ldots v_n$ is a
graph path
in $\Gamma$; thus a ray is a one-way infinite
graph path.
The vertex $v_1$ is often called {\em root} of $R$. The {\em ends} of $\Gamma$ are equivalence classes of rays: two rays $R_1$ and $R_2$ lie in the same {\em end} if there is a third ray $R_3$ that contains infinitely many vertices from both $R_1$ and $R_2$. In the special case of an infinite tree $T$, the ends of $T$ are particularly easy to picture: for a fixed vertex $x \in VT$, each end $\epsilon$ contains precisely one ray whose root is $x$, and we denote this ray by $[x, \epsilon)_{T}$.

Let $\cda, \cdb$ be non-zero cardinal numbers. A tree $T$ is {\em $\cda$-regular}, or sometimes {\em regular}, if the valency of every vertex equals $\cda$. There is a natural bipartition of any tree $T$, in which any pair of vertices whose distance is even lie in the same part of the partition. If all vertices in one part
of
this bipartition have valency $\cda$, and all vertices in the other part have valency $\cdb$, we say that $T$ is {\em $(\cda, \cdb)$-biregular}. If $a \in AT$ is an arc in $T$ and both connected components of $T \setminus \{a, \overline{a}\}$ are infinite, then each connected component is called a {\em half-tree} of $T$. We denote the half-tree of $T \setminus  \{a, \overline{a}\}$ containing $o(a)$ by $T_{a}$, so the other connected component of $T \setminus \{a, \overline{a}\}$ is $T_{\overline{a}}$.

Given a permutation group $G \leq \sym(V)$ and distinct elements $v, w \in V$, one can construct a graph whose vertex set is $V$ and whose edges are the elements of the orbit $G\{v, w\}$. Such a graph is called an {\em orbital graph} of $G$.

\subsection{Permutation groups as topological groups}
\label{subsection:perm_topology}

For a thorough introduction see \cite{moller:topgroups} and \cite{woess:topological_groups_and_infinite_graphs}.
If $\VV$ is any non-empty set, then $\sym(\VV)$ can be given a natural topology, that of {\em pointwise convergence}, under which $\sym(\VV)$ is a Hausdorff topological group. A basis of neighbourhoods of the identity is given by point stabilisers of finite subsets of $\VV$, and so an open set in  $\sym(\VV)$ is a union of cosets of point stabilisers of finite subsets of $\VV$. Under this topology, if $G \leq \sym(\VV)$ then a subgroup of $G$ is open in $G$ if and only if it contains the pointwise stabiliser (in $G$) of some finite subset of $\VV$. We will often refer to this topology as the {\em permutation topology}.

For any finite subset $\VVsubset$, the pointwise stabiliser $(\sym(\VV))_{(\VVsubset)}$ is both open and closed in $\sym(\VV)$, so with this topology $\sym(\VV)$ is totally disconnected.
Convergence in $\sym(\VV)$ is natural: a set $W \subseteq \sym(\VV)$ has a limit point $h \in \sym(\VV)$ if and only if, for all finite subsets $\VVsubset \subseteq \VV$, there exists $g \in W$ distinct from $h$ such that $gh^{-1} \in \sym(\VV)_{(\VVsubset)}$. 

Let $G$ be a subgroup of $\sym(\VV)$. The group $G$ is closed if and only if some point stabiliser
is closed, which holds if and only if all point stabilisers in $G$ are closed. The group $G$ is compact if and only if $G$ is closed and all $G$-orbits on $\VV$ are finite. A closed
subgroup 
$G$ is locally compact if and only if $G_{(\VVsubset)}$ is compact for some finite $\VVsubset \subseteq \VV$ (see \cite[Lemma 3.1]{evans97} for example).
In particular, if $\Gamma$ is a connected locally finite graph, then any closed subgroup $G$ of $\aut \Gamma$ will be totally disconnected and locally compact.

A topological space in which every subset is open is called {\em discrete}, and so the permutation topology is discrete on $G$ if and only if there is a finite subset $\VVsubset \subseteq \VV$ such that $G_{(\VVsubset)}$ is trivial. If the permutation topology is discrete on $G$ we say $G$ is a {\em discrete permutation group}.

%
%
\section{Groups acting on trees without inversion: a universal group}
\label{Section:Universal}

Throughout, $\sta$ and $\stb$ will be disjoint sets, each containing at least two elements, and $\gpa \leq \sym(\sta)$ and $\gpb \leq \sym(\stb)$ will be permutation groups.
Let $\T$ denote the $(|\sta|, |\stb|)$-biregular tree. 
Let $\Va$ and $\Vb$ denote the two parts of the natural bipartition of the vertices of $\T$, such that all vertices in $\Va$ have valency $|\sta|$ in $\T$, and all vertices in $\Vb$ have valency $|\stb|$ in $\T$.
A function $\col : A\T \rightarrow \sta \cup \stb$ is called a
{\em legal colouring with $\sta$ and $\stb$}
if it satisfies:
\begin{enumerate}
\item
	for all $v \in \Va$, the restriction $\col \big|_{\ArcsFrom(v)} : \ArcsFrom(v) \rightarrow \sta$ is a bijection;
\item
	for all $v \in \Vb$, the restriction $\col \big|_{\ArcsFrom(v)} : \ArcsFrom(v) \rightarrow \stb$ is a bijection; and
\item \label{def:thirdcondition}
	 for all $v \in VT$, the 
	 map $\col \big|_{\overline{\ArcsFrom}(v)}$ is constant.
\end{enumerate}
One may easily verify that it is always possible to construct a 
legal colouring with $\sta$ and $\stb$.

If $\gpa \leq \sym(\sta)$ and $\gpb \leq \sym(\stb)$, and $\col$ is a
legal colouring with $\sta$ and $\stb$,
define
\begin{equation*} \label{eq:definition}
\Universal{\gpa}{\gpb}{\col} := 
\left \{
	g \in \left (\aut \T \right)_{\{\Va\}} : \col \big|_{\ArcsFrom(gv)}   g \big|_{\ArcsFrom(v)}   \col \big|_{\ArcsFrom(v)}^{-1} \in
	\begin{cases}
		\gpa \text{ for all $v \in \Va$} \\
		\gpb \text{ for all $v \in \Vb$}
	\end{cases}
\right \}.
\end{equation*}
Verifying that $\Universal{\gpa}{\gpb}{\col}$ is a subgroup of $\aut \T$ is tedious but not difficult.
To simplify our exposition we introduce the following notation (which depends on the choice of a legal colouring function $\col$):
\[\ColouringInducedPerm{g}{v} := \col \big|_{\ArcsFrom(gv)}   g \big|_{\ArcsFrom(v)}   \col \big|_{\ArcsFrom(v)}^{-1}.\]

The first construction of the group $\Universal{\gpa}{\gpb}{\col}$ was inspired by \cite[Section 2.2]{me:jls}, and  $\Universal{\gpa}{\gpb}{\col}$ can be constructed using refinements of the arguments in \cite{me:jls} (which use relational structures), so long as $\gpa$ and $\gpb$ are closed (in their respective permutation topologies).
The group acts on a biregular tree that may have infinite valencies, and is 
a generalisation of
Burger and Mozes' universal group $U(F)$, which acts on a
regular tree.

Since $\Universal{\gpa}{\gpb}{\col}$ preserves the parts $\Va$ and $\Vb$, it induces a subgroup of $\sym(\Vb)$. This motivates the following definition.

\begin{definition} The {\em box product of $\gpa$ and $\gpb$}, denoted 
$\Boxproduct{\gpa}{\gpb}{\col}$, is the subgroup of $\sym(\Vb)$ that is induced by $\Universal{\gpa}{\gpb}{\col}$. As we shall see in Proposition~\ref{prop:different_col_are_conjugate}, for two legal colourings $\col, \col'$ 
the groups $\Boxproduct{\gpa}{\gpb}{\col}$ and $\Boxproduct{\gpa}{\gpb}{\col'}$  are permutationally isomorphic, and so we write $\Boxproduct{\gpa}{\gpb}{}$ instead of $\Boxproduct{\gpa}{\gpb}{\col}$ when there is no chance of ambiguity.
\end{definition}

The group $\aut T$ can be viewed as a permutation group in a number of natural ways. Throughout this paper, when $\aut T$ is referred to as a permutation group, we will always mean as a group of permutations of the vertices of $T$. This applies also to $\Universal{\gpa}{\gpb}{\col}$ and to the Burger--Mozes universal group $U(F)$.

\begin{remark} \label{rem:topo_iso} 
As permutation groups, $\Universal{\gpa}{\gpb}{\col}$ and $\Boxproduct{\gpa}{\gpb}{\col}$ 
are not permutation isomorphic in general.
However, when bestowed with their respective permutation topologies, the two groups are isomorphic as topological groups. This is not difficult to see, because for each finite subset $\Phi$ of $VT$, one can find finite subsets $\Phi_\sta \subseteq \Va$ and $\Phi_\stb \subseteq \Vb$ such that the pointwise stabilisers $(\aut T)_{(\Phi_\sta)}$ and $(\aut T)_{(\Phi_\stb)}$ fix $\Phi$ pointwise.

Although $\Universal{\gpa}{\gpb}{\col}$ and $\Boxproduct{\gpa}{\gpb}{\col}$ are isomorphic as topological groups, we will typically use the notation $\Universal{\gpa}{\gpb}{\col}$ in statements concerning the topological properties of the box product, and $\Boxproduct{\gpa}{\gpb}{\col}$ in statements concerning the permutational properties of the box product. This is because the topological properties of the box product are typically symmetric in $\gpa$ and $\gpb$, while the permutational properties are not.

Given a group $F$ of permutations of a set of cardinality
$n \geq 3$,
the Burger--Mozes universal group $U(F)$ acts on the regular tree $T_n$, and this induces an action on $T_{2,n}$, the barycentric subdivision of $T_n$. The universal group  $\Universal{S_2}{F}{}$ also acts on $T_{2,n}$ and (for an appropriate choice of legal colouring $\col$) it is not difficult to see that $U(F)$ and $\Boxproduct{S_2}{F}{\col}$ are permutation isomorphic, where $S_2$ denotes the symmetric group of degree $2$. Hence, if $\sim$ denotes the existence of a topological isomorphism we have: $U(F) \sim \Boxproduct{S_2}{F}{} \sim \Universal{S_2}{F}{} = \Universal{F}{S_2}{} \sim \Boxproduct{F}{S_2}{}$. It follows that our universal construction is a generalisation of the Burger--Mozes universal group.
\end{remark} 

\begin{remark} It is not difficult to see that
 $\Universal{\gpa}{\gpb}{\col}$ enjoys the following monotonicity property: if $\gpa_1 \leq \gpa_2$ and $\gpb_1 \leq \gpb_2$, then $\Universal{\gpa_1}{\gpb_1}{\col} \leq \Universal{\gpa_2}{\gpb_2}{\col}$ for any legal colouring $\col$.

At this juncture it might be instructive for the reader to consider that if $\gpa$ and $\gpb$ are both transitive, then $\Universal{ \langle 1 \rangle }{\gpb}{\col} \leq \Universal{\gpa}{\gpb}{\col}$ is transitive on $\Va$ and $\Universal{\gpa}{ \langle 1 \rangle }{\col} \leq \Universal{\gpa}{\gpb}{\col}$ is transitive on $\Vb$. In fact the orbits of $\Universal{\gpa}{\gpb}{\col}$ are completely determined by the orbits of its local actions $\gpa$ and $\gpb$, as we shall see in Proposition~\ref{prop:products_orbits}.
\end{remark}

Let us say that a group $H \leq \aut \T$ is {\em locally-$(\gpa, \gpb)$} if $H$ fixes
setwise the parts $\Va$ and $\Vb$,
and for all vertices $v$ of $\T$ the group $H_v \big|_{B(v)} \leq \sym (B(v))$ induced by the vertex stabiliser $H_v$ is permutation isomorphic to $\gpa$ if $v \in \Va$ and $\gpb$ if $v \in \Vb$. As a subgroup of $\aut \T$, the group $\Universal{\gpa}{\gpb}{\col}$ has the following properties.

\begin{theorem} \label{theorem:product_as_aut_T} Suppose $\sta$, $\stb$ are disjoint sets of cardinality at least two, and $T$ is the $(|\sta|, |\stb|)$-biregular tree. Given permutation groups $\gpa \leq \sym (\sta)$ and $\gpb \leq \sym (\stb)$, and a legal colouring $\col$ of $\T$,
\begin{enumerate}
\item \label{item:orbits_in_theorem:product_as_aut_T}
	$\Delta \subseteq \sta \cup \stb$ is an orbit of $\gpa$ or $\gpb$ if and only if $t \left ( \col^{-1}\Delta \right )$ is an orbit of $\Universal{\gpa}{\gpb}{\col}$;
\item
	given legal colourings $\col, \altcol$, the groups $\Universal{\gpa}{\gpb}{\col}$ and $\Universal{\gpa}{\gpb}{\altcol}$ are conjugate in $\aut \T$;
\item \label{item:universal}
	if $\gpa$ and $\gpb$ are transitive, and $H \leq \aut \T$ is locally-($\gpa, \gpb$), then $H$ is edge-transitive and for some legal colouring $\col$ we have
	\[H \leq \Universal{\gpa}{\gpb}{\col};\]
\item
	$\Universal{\gpa}{\gpb}{\col}$ is locally-($\gpa, \gpb$);
\item
	if $\gpa$ and $\gpb$ are closed, then $\Universal{\gpa}{\gpb}{\col}$ is a closed subgroup of $\aut \T$.
\end{enumerate}
\end{theorem}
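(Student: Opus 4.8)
The plan is to start by recasting a legal colouring in a form that trivialises condition (\ref{def:thirdcondition}). Since that condition forces $\col(a)$ to depend only on $t(a)$, a legal colouring is equivalent to a \emph{terminal colouring} $\tau\colon V\T\to\sta\cup\stb$ with $\tau(\Vb)\subseteq\sta$ and $\tau(\Va)\subseteq\stb$, whose restriction to each neighbourhood $B(v)$ is a bijection onto $\sta$ or $\stb$; one recovers $\col$ by $\col(a)=\tau(t(a))$. In this language the local action of $g$ at $v$ is the permutation $\sigma$ with $\sigma(\tau(z))=\tau(gz)$ for $z\in B(v)$, so membership in $\Universal{\gpa}{\gpb}{\col}$ is exactly the requirement that every such $\sigma$ lie in $\gpa$ (for $v\in\Va$) or $\gpb$ (for $v\in\Vb$). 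Three of the five claims are then short. The containment $\Universal{\gpa}{\gpb}{\col}_v|_{B(v)}\leq\gpa$ in (iv) is immediate, since $\tau|_{B(v)}$ conjugates the induced action of the stabiliser into $\gpa$. The forward direction of (i) is immediate too: if $gv=v'$ with $v\in\Vb$ and $u\in\Va$ is any neighbour of $v$, the local action $\sigma\in\gpa$ at $u$ gives $\tau(v')=\sigma(\tau(v))$, so $\tau(v)$ and $\tau(v')$ share a $\gpa$-orbit. For (v) I would note that the defining conditions are closed under pointwise convergence: preserving $\Va$ is closed, and for a limit $g$ of group elements the local action at any $v$ agrees on each finite set of colours with that of a nearby group element, hence lies in $\overline{\gpa}=\gpa$ (or $\gpb$); restricting to finite colour-sets is what accommodates infinite valency, and closedness of $\gpa,\gpb$ is used precisely here.

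The engine for the remaining directions is an extension principle built on the tree's acyclicity: rooting $\T$ at a vertex $v$, one may prescribe the image $gv$ and a local action at $v$, and then extend $g$ outward one sphere at a time, at each newly reached vertex choosing any local action in $\gpa$ or $\gpb$ compatible with the already-fixed image of its parent. The key point is that this compatibility is automatically satisfiable: the image of the parent was itself produced by applying a group element at the previous step, so the two relevant colours already lie in a common $\gpa$- or $\gpb$-orbit, and a suitable local action exists. This yields (iv)'s reverse inclusion by fixing $gv=v$, prescribing any $\alpha\in\gpa$ at $v$, and extending by identity local actions. It also yields the reverse direction of (i): given $v,v'\in\Vb$ with $\tau(v),\tau(v')$ in one $\gpa$-orbit, root at $v$, set $gv=v'$ with an arbitrary local action in $\gpb$ at $v$, and extend; the hypothesis on the colours is used to choose, at each neighbour $u\in\Va$ of $v$, a $\gpa$-element carrying $\tau(v)$ to $\tau(v')$. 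Combining the two directions of (i) shows that $\tau$ induces a bijection from $\Universal{\gpa}{\gpb}{\col}$-orbits on $\Vb$ to $\gpa$-orbits on $\sta$ (and symmetrically on $\Va$), which is the stated correspondence.

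For (ii) I would produce a single $h\in\aut\T$ with $\tau'\circ h=\tau$, where $\tau,\tau'$ are the terminal colourings of $\col,\col'$. Such an $h$ is built by a breadth-first matching: map a root to any vertex of the same part whose $\tau'$-colour equals the root's $\tau$-colour, then extend across each neighbourhood by pairing vertices of equal terminal colour, which is possible because both colourings are bijective on neighbourhoods; acyclicity of $\T$ means no consistency obstruction arises. A direct computation then shows $h\,\Universal{\gpa}{\gpb}{\col}\,h^{-1}=\Universal{\gpa}{\gpb}{\col'}$, since $\col'\circ h=\col$ makes the local action of $hgh^{-1}$ at $v$ with respect to $\col'$ equal to the local action of $g$ at $h^{-1}v$ with respect to $\col$.

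The main obstacle is (iii). Here $\gpa,\gpb$ are transitive, so local transitivity together with connectivity first gives that $H$ is transitive on each of $\Va,\Vb$. The task is to manufacture one legal colouring under which \emph{every} local action of \emph{every} element of $H$ lands in $\gpa$ or $\gpb$ --- not merely the local actions of vertex stabilisers. My plan is to transport a base colouring through $H$: fix $v_0\in\Va$ and a bijection $\tau_{v_0}\colon B(v_0)\to\sta$ realising the permutation isomorphism $H_{v_0}|_{B(v_0)}\cong\gpa$, choose for each $v\in\Va$ an element $h_v\in H$ with $h_v v_0=v$, and set $\tau|_{B(v)}=\tau_{v_0}\circ h_v^{-1}$ (and symmetrically for the colours of $\Va$-vertices, using a base vertex in $\Vb$ and $\gpb$). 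With such a definition the local action of any $g\in H$ at $v$ equals $\tau_{v_0}\,(h_{gv}^{-1}gh_v)|_{B(v_0)}\,\tau_{v_0}^{-1}$, and since $h_{gv}^{-1}gh_v$ fixes $v_0$ this lies in $\gpa$, as required. The delicate part --- and the crux of the whole theorem --- is that $\tau$ must be a well-defined vertex function: each $\Vb$-vertex lies in several $\Va$-stars and must receive a single colour. I would resolve this by choosing the sections $\{h_v\}$ coherently along the tree, using the transitivity of $\gpa$ (respectively $\gpb$) to adjust each $h_v$ within its coset $h_vH_{v_0}$ so that overlapping stars agree on shared vertices, acyclicity of $\T$ guaranteeing that this greedy matching never meets a conflicting earlier choice. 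Verifying this coherence, and that the two independent transports together give a legal colouring, is where the real work lies; everything else reduces to the extension principle and the terminal-colouring bookkeeping above.
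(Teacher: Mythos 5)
Your proposal is correct and takes essentially the same route as the paper: your ``extension principle'' is precisely the paper's key Lemma~\ref{lem:g_and_two_colourings}, your transport-with-coherent-sections plan for (iii) (adjusting each $h_v$ inside its coset so overlapping stars agree, with acyclicity preventing conflicts) is exactly the mechanism of Proposition~\ref{prop:H_embeds_in_some_product}, and your treatments of (i), (ii), (iv), (v) mirror Propositions~\ref{prop:products_orbits} and~\ref{prop:different_col_are_conjugate} and Lemmas~\ref{lem:product_locally_MN} and~\ref{lem:product_is_closed} (your closure-under-pointwise-limits argument is the contrapositive of the paper's open-complement argument). One small slip worth fixing: in (iv), extending by \emph{identity} local actions is incompatible once you reach the $\Va$-vertices at distance two from the root, since the parent's image was moved by $\alpha$ and so forces the local action $\alpha(\col(w))$ there---the correct pattern is $\alpha$ at every $\Va$-vertex and the identity at every $\Vb$-vertex---but since your extension principle only requires \emph{some} compatible choice at each step, the argument survives with that one-word repair.
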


Thus, when $\gpa$ and $\gpb$ are transitive, $\Universal{\gpa}{\gpb}{\col}$ is the universal locally-($\gpa, \gpb$) group, in that it contains a permutationally isomorphic copy of every locally-($\gpa, \gpb$) group acting on the biregular tree $\T$.
Theorem~\ref{theorem:product_as_aut_T} follows from Lemma~\ref{lem:g_and_two_colourings}, Propositions~\ref{prop:products_orbits}--\ref{prop:H_embeds_in_some_product} and Lemmas~\ref{lem:product_locally_MN}--\ref{lem:product_is_closed}.\\

Our arguments in this section rely on manipulating sequences of bijections and restrictions of functions.
Although these manipulations can look rather daunting, they are in fact very simple and involve only three basic steps which are given in the following lemma.

\begin{lemma} \label{lemma:ColourMapContinuous}
If $g, h \in (\aut T)_{\{\Va\}}$, then for all $w \in VT$,
\begin{enumerate}
\item \label{ItemBasicStepOne}
	$g \big|_{\ArcsFrom(w)}^{-1} = g^{-1} \big|_{\ArcsFrom(gw)}$; 
\item \label{ItemBasicStepTwo}
	$g \big|_{\ArcsFrom(hw)} h \big|_{\ArcsFrom(w)} g^{-1} \big|_{\ArcsFrom(gw)} = \left ( g h g^{-1} \right ) \big|_{\ArcsFrom(gw)}$; and
\item \label{ItemBasicStepThree}
	if $\col = \altcol  g$, then $\col \big|_{\ArcsFrom(w)} = \altcol \big|_{\ArcsFrom(gw)}   g \big|_{\ArcsFrom(w)}$ and  $\col \big|_{\ArcsFrom(w)}^{-1} = g^{-1} \big|_{\ArcsFrom(gw)} \altcol \big|_{\ArcsFrom(gw)}^{-1}$. 
\end{enumerate}
Moreover, for a fixed $w \in VT$, the map $g \mapsto \ColouringInducedPerm{g}{w}$ is continuous.
\end{lemma}

\begin{proof} Statements (\ref{ItemBasicStepOne})--(\ref{ItemBasicStepThree}) are obviously true. Suppose $w \in \Va$ and let $G := (\aut T)_{\{\Va\}}$. We will show that the map $\chi_w : G \rightarrow \sym(\sta)$ given by $g \mapsto \ColouringInducedPerm{g}{w}$ is continuous, where $G$ and $\sym(\sta)$ have their respective permutation topologies.

Clearly $\chi_w$ is surjective. Fix some open set $W \subseteq \sym(\sta)$ and choose any $g$ in the preimage $\chi_w^{-1}(W)$. Then $\chi_w(g)$ lies in the open set $W$ so there exists some finite subset $Z \subseteq \sta$ such that $\chi_w(g) \sym(\sta)_{(Z)} \subseteq W$. Since $\col \big|^{-1}_{\ArcsFrom(w)} (Z)$ is a finite set of arcs, we can find a finite set $\Phi$ of vertices such that $w \in \Phi$ and $G_{(\Phi)}$ fixes individually each arc in $\col \big|^{-1}_{\ArcsFrom(w)} (Z)$.

If $h \in G_{(\Phi)}$, then $\ArcsFrom(ghw) = \ArcsFrom(gw)$ and $\ArcsFrom(hw) = \ArcsFrom(w)$, and so:
\[
	\chi_w(gh) =
	\col \big|_{\ArcsFrom(gw)} g \big|_{\ArcsFrom(w)} \col \big|_{\ArcsFrom(w)}^{-1}
	\col \big|_{\ArcsFrom(w)} h \big|_{\ArcsFrom(w)} \col \big|_{\ArcsFrom(w)}^{-1}
	= \chi_w(g)\chi_w(h).
\]
Furthermore, one may easily verify that $\chi_w(h)$ fixes $Z$ pointwise.
Hence $\chi_w(gG_{(\Phi)}) \subseteq \chi_w(g) \sym(\sta)_{(Z)} \subseteq W$. Now $gG_{(\Phi)}$ contains $g$ and is open in $G$, and we have shown that $gG_{(\Phi)} \subseteq \chi_w^{-1}(W)$. Hence $\chi_w^{-1}(W)$ is open and $\chi_w$ must therefore be continuous.

If $w \in \Vb$, then a symmetric argument shows that the map $\chi_w : G \rightarrow \sym(\stb)$ is continuous.
\end{proof}

\begin{remark} \label{remark:DefnOfUinTermsOfTheta}
For a given legal colouring $\col$, the continuous maps $\chi_w : (\aut T)_{\{\Va\}} \rightarrow \sym(\sta)$ and $\chi_w : (\aut T)_{\{\Va\}} \rightarrow \sym(\stb)$ given by $g \mapsto \ColouringInducedPerm{g}{w}$ for, respectively, $w \in \Va$ and $w \in \Vb$,  provide an alternate description of $\Universal{\gpa}{\gpb}{\col}$  as the intersection over all vertices $w \in VT$ of the preimages of $\gpa$ and $\gpb$ under the maps $\ColouringInducedPerm{\cdot}{w}$. That is,
\[\Universal{\gpa}{\gpb}{\col} = \left ( \bigcap_{w \in \Va} \chi^{-1}_w(\gpa) \right ) \cap \left ( \bigcap_{w \in \Vb} \chi_w^{-1}(\gpb) \right ).\]
\end{remark}

In what follows, $\col$ and $\col'$ are arbitrary
legal colourings with $\sta$ and $\stb$.
To simplify notation, we  associate each constant function $\col \big|_{\overline{\ArcsFrom}(w)}$ with its image in $\sta \cup \stb$, so for all $w \in VT$ we have $\col \big|_{\overline{\ArcsFrom}(w)} \in \sta \cup \stb$. 
We now present a lemma that underpins most results in this section.

\begin{lemma} \label{lem:g_and_two_colourings} If the distance between $v, v' \in VT$ is even, then there exists $\sigma \in  \sym(\sta \cup \stb)_{\{\sta\}}$ satisfying $\sigma \altcol \big|_{\overline{\ArcsFrom}(v')} = \col \big|_{\overline{\ArcsFrom}(v)}$, and for all such $\sigma$ there exists a unique automorphism $g \in (\aut T)_{\{\Va\}}$ such that $g v = v'$ and $\col = \sigma \altcol g$.
\end{lemma}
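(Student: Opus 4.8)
The plan is to produce $\sigma$ by a transitivity argument and then to build $g$ by induction outward from $v$, letting the two colourings dictate the local action at every vertex. Throughout I write $\col(a)$ for the value of a colouring on an arc $a$, and I recall that after the stated identification $\col\big|_{\overline{\ArcsFrom}(w)}$ and $\altcol\big|_{\overline{\ArcsFrom}(w)}$ are elements of $\sta \cup \stb$.

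First, the existence of $\sigma$. Since $d(v,v')$ is even, $v$ and $v'$ lie in the same part of the bipartition, so $\col\big|_{\overline{\ArcsFrom}(v)}$ and $\altcol\big|_{\overline{\ArcsFrom}(v')}$ lie in the \emph{same} set: in $\stb$ if $v,v'\in\Va$ and in $\sta$ if $v,v'\in\Vb$. Because $|\sta|,|\stb|\geq 2$, the group $\sym(\sta\cup\stb)_{\{\sta\}}$ acts transitively on each of $\sta$ and $\stb$ separately, so some $\sigma$ with $\sigma\,\altcol\big|_{\overline{\ArcsFrom}(v')}=\col\big|_{\overline{\ArcsFrom}(v)}$ exists. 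Now fix such a $\sigma$ and define $g$ on $VT$ by induction on $d(v,\cdot)$: set $gv:=v'$, and given $w$ with $gw$ already defined and a neighbour $u$ of $w$ farther from $v$, let $gu$ be the unique neighbour of $gw$ for which the arc $a'=(gw,gu)$ satisfies $\altcol(a')=\sigma^{-1}\col\big((w,u)\big)$. This is well defined: each step moves to a neighbour, so $g$ preserves the bipartition relative to $v,v'$ and hence $w\in\Va\iff gw\in\Va$; then $\col\big((w,u)\big)$ and its $\sigma^{-1}$-image lie in $\sta$ or $\stb$ according to the part of $gw$, and conditions~(i) and~(ii) make $\altcol\big|_{\ArcsFrom(gw)}$ a bijection onto exactly that set, pinning down $gu$. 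Distinct neighbours of $w$ carry distinct colours, hence receive distinct images.

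The crucial step is the propagation identity $\col\big|_{\overline{\ArcsFrom}(w)}=\sigma\,\altcol\big|_{\overline{\ArcsFrom}(gw)}$ for \emph{every} $w\in VT$. For $w=v$ this is the defining property of $\sigma$. For $w\neq v$ with parent $p$, the arc $(p,w)$ lies in $\overline{\ArcsFrom}(w)$ and $(gp,gw)$ lies in $\overline{\ArcsFrom}(gw)$, so condition~(iii) identifies $\col\big|_{\overline{\ArcsFrom}(w)}$ with $\col\big((p,w)\big)$ and $\altcol\big|_{\overline{\ArcsFrom}(gw)}$ with $\altcol\big((gp,gw)\big)$; these are matched by $\sigma$ by the very formula defining $gw$. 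With this identity, the local map $B(w)\to B(gw)$, $u\mapsto gu$, agrees with $\altcol\big|_{\ArcsFrom(gw)}^{-1}\,\sigma^{-1}\,\col\big|_{\ArcsFrom(w)}$ on every neighbour, including the parent of $w$ (for which one reads the identity at $p$). Hence it is a bijection of neighbourhoods and $\col\big((w,u)\big)=\sigma\,\altcol\big((gw,gu)\big)$ holds for \emph{every} arc $(w,u)$; that is, $\col=\sigma\altcol g$. A map of a tree that is bijective on each neighbourhood sends geodesics to non-backtracking walks, which are again geodesics, so $g$ preserves distances (hence is injective) and is surjective by local surjectivity together with connectedness; thus $g\in\aut\T$, and as it preserves distance while $d(v,v')$ is even it fixes $\Va$ setwise, giving $g\in(\aut\T)_{\{\Va\}}$ with $gv=v'$.

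For uniqueness, any $h\in(\aut\T)_{\{\Va\}}$ with $hv=v'$ and $\col=\sigma\altcol h$ must satisfy $\altcol\big((hw,hu)\big)=\sigma^{-1}\col\big((w,u)\big)$ on every arc, so $hu$ is forced once $hw$ is known; inducting from $hv=v'$ yields $h=g$. I expect the genuine obstacle to be the propagation identity of the previous paragraph, i.e.\ verifying the colouring condition on the arcs directed back toward $v$. The outward construction only controls arcs directed away from $v$, and it is condition~(iii) --- that each colouring is constant on the arcs into any given vertex --- that allows the single matching imposed by $\sigma$ at $v$ to propagate to every vertex. Without this condition $g$ would still be an automorphism, but it need not satisfy $\col=\sigma\altcol g$.
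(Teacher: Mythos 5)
Your proof is correct and takes essentially the same route as the paper's: both construct $g$ outward from $v$ using the colour-determined local bijections $\altcol \big|_{\ArcsFrom(gw)}^{-1} \sigma^{-1} \col \big|_{\ArcsFrom(w)}$, both rely on condition~(\ref{def:thirdcondition}) to verify the colouring identity on arcs pointing back toward $v$ (your ``propagation identity'' is exactly the paper's case $t(a) \in VB_n$), and both get uniqueness because the colourings force the image of each vertex once its parent's image is fixed. The only difference is bookkeeping --- the paper inducts over balls $B_n$ via graph isomorphisms $g_n : B_n \rightarrow B'_n$, while you induct vertex-by-vertex --- so the mathematical content is identical.
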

\begin{proof} 
Since $\col$ and $\altcol$ are legal colourings, we can always find $\sigma \in  \sym(\sta \cup \stb)_{\{\sta\}}$ satisfying $\sigma \altcol \big|_{\overline{\ArcsFrom}(v')} = \col \big|_{\overline{\ArcsFrom}(v)}$. For each integer $n \geq 0$, let $B_n$ (resp. $B'_n$) be the subtree of $T$ induced by those vertices whose distance in $T$ from $v$ (resp. $v'$) is at most $n$. Because $d(v, v')$ is even, $v$ and $v'$ belong to the same part of the bipartition of $T$.

Let $g_0 : B_0 \rightarrow B'_0$ be the map taking $v$ to $v'$. Our proof that an appropriate element $g \in \aut T$ exists relies on an easy induction argument with the following induction hypothesis: for all integers $n \geq 1$, there exists a graph isomorphism $g_n : B_n \rightarrow B_n'$ such that $\sigma \altcol \big|_{B'_n} g_n = \col \big|_{B_n}$ and $g_n \big|_{B_{n-1}} = g_{n-1}$. The base case is true because $\altcol \big|_{\ArcsFrom(v')}^{-1} \sigma^{-1} \col \big|_{\ArcsFrom(v)}$ is a bijection from $\ArcsFrom(v)$ to $\ArcsFrom(v')$, and so it induces a graph isomorphism $g_1: B_1 \rightarrow B_1'$ with $g_1 v  = v'$.

Suppose, for some $n \geq 1$, that our induction hypothesis holds.
If $w$ is a vertex in $B_n \setminus B_{n-1}$, then there is a unique arc $a$ in $B_n$ whose origin is $w$, and $g_n a$ is the unique arc in $B'_n$ whose origin is $g_n w$.  Now $\col^{'} \big|^{-1}_{\ArcsFrom(g_n w)}  \sigma^{-1} \col \big|_{\ArcsFrom(w)}$ maps $a$ to $g_n a$ and so it
induces a bijection
$h_w : B(w) \setminus VB_n \rightarrow B(g_n w) \setminus VB'_n$. Let $g_{n+1}$ be $g_n$ extended by all these $h_w$, for $w \in B_n \setminus B_{n-1}$, and note that $g_{n+1}$ is a graph isomorphism from $B_{n+1}$ to $B'_{n+1}$ with $g_{n+1} \big|_{B_n} = g_n$.
It remains to show that $\sigma \altcol \big|_{B'_{n+1}} g_{n+1} = \col \big|_{B_{n+1}}$. We need only consider arcs in $B_{n+1} \setminus B_n$. If $a$ is such an arc, then either $t(a) \in VB_n$ or $o(a) \in VB_n$. If $t(a) \in VB_n$, chose an arc $b$ in $B_n$ with $t(b) = t(a)$. Since $t(b) = t(a)$ and $t(g_{n+1} a) = t(g_{n+1} b)$ we have $\col b = \col a$ and $\altcol   g_{n+1} b =  \altcol   g_{n+1} a$. Moreover,  $\sigma \altcol   g_{n+1} b = \col b$ by the induction hypothesis, and so $\col a = \col b = \sigma \altcol   g_{n+1} b = \sigma \altcol   g_{n+1} a$. On the other hand, if $w:=o(a) \in VB_n$ then $g_{n+1} w = g_n w$ and $g_{n+1} t(a) = h_{w} t(a)$. Hence (by definition of $h_w$) we have
$\sigma \altcol   g_{n+1} a = \col a$.
Thus, our induction hypothesis is true for all integers $n \geq 1$.

We now construct an element $g \in (\aut T)_{\{\Va\}}$ that satisfies $\sigma \altcol g = \col$ and $gv = v'$.
For each vertex $w \in VT \setminus \{v\}$ there is a unique $n(x) \in \N$ such that $w \in B_{n(x)} \setminus B_{n(x)-1}$. Define $g : VT \rightarrow VT$ to be such that $gv := v'$ and for all $w \in VT \setminus \{v\}$ set $gx := g_{n(x)}x$. One may easily verify that $g \in (\aut T)_{\{\Vb\}}$.
Moreover, for all $a = (w, w') \in AT$, there exists an $m$ such that $a \in AB_{m}$. Then $ga = (g_{n(w)} w, g_{n(w')} w') = (g_m w, g_m w') = g_m a$. Hence $\sigma \col' g a = \sigma \col' g_{m} a = \col a$.

It remains to prove that $g$ is unique. Suppose $h \in \aut \T$ with  $\sigma \altcol   h = \col$ and $hv = v'$. We claim that if $gh^{-1}w = w$ for some $w \in VT$, then $gh^{-1}$ fixes $B(w)$ pointwise. Indeed, if $a \in \ArcsFrom(w)$ then $gh^{-1} a \in \ArcsFrom(w)$, and  $\sigma \altcol g h^{-1} a = \col h^{-1} a = \sigma \altcol h h^{-1} a = \sigma \altcol a$. Our claim follows immediately because $\altcol \big|_{\ArcsFrom(w)}$ is a bijection. Since $hv = v'$, we have $gh^{-1}v' = v'$, and our claim tells us that $gh^{-1}$ fixes $B(v')$ pointwise. Since $T$ is connected, $gh^{-1}$ fixes $VT$ pointwise, and is therefore the identity.
\end{proof}

The orbits of $\Universal{\gpa}{\gpb}{\col}$ relate naturally to those of $\gpa$ and $\gpb$.

\begin{proposition}\label{prop:products_orbits} Vertices $v, v'$ in $\Va$ (resp. $\Vb$) lie in the same orbit of  $\Universal{\gpa}{\gpb}{\col}$ if and only if $\col \big|_{\overline{\ArcsFrom}(v)}$ and $\col \big|_{\overline{\ArcsFrom}(v')}$ lie in the same orbit of $\gpb$ (resp. $\gpa$).
\end{proposition}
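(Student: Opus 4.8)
The plan is to prove the statement for $v, v' \in \Va$ and then note that the case $v, v' \in \Vb$ is identical after interchanging $(\sta, \gpa)$ with $(\stb, \gpb)$. I would begin with two preliminary observations. Any arc terminating at a vertex of $\Va$ originates at a vertex of $\Vb$, so condition (2) of a legal colouring gives $\col\big|_{\overline{\ArcsFrom}(v)} \in \stb$ for every $v \in \Va$: this is exactly the element whose $\gpb$-orbit we must control. Moreover $\Universal{\gpa}{\gpb}{\col}$ fixes $\Va$ setwise, so its orbits do not mix the two parts, and any two vertices of $\Va$ lie at even distance; hence Lemma~\ref{lem:g_and_two_colourings}, applied with both colourings equal to $\col$, is available throughout.

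For necessity, suppose $g \in \Universal{\gpa}{\gpb}{\col}$ satisfies $gv = v'$. I would pick a neighbour $u \in \Vb$ of $v$ and set $a := (u, v)$, so that $\col(a) = \col\big|_{\overline{\ArcsFrom}(v)}$, while $ga = (gu, v')$ terminates at $v'$ and so $\col(ga) = \col\big|_{\overline{\ArcsFrom}(v')}$. Passing $\col(a)$ through the local permutation $\col\big|_{\ArcsFrom(gu)}\, g\big|_{\ArcsFrom(u)}\, \col\big|_{\ArcsFrom(u)}^{-1}$ --- which lies in $\gpb$ because $u \in \Vb$ --- returns $\col(ga)$. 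Thus an element of $\gpb$ carries $\col\big|_{\overline{\ArcsFrom}(v)}$ to $\col\big|_{\overline{\ArcsFrom}(v')}$, so these two colours share a $\gpb$-orbit.

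For sufficiency, suppose $n \in \gpb$ satisfies $n\,\col\big|_{\overline{\ArcsFrom}(v')} = \col\big|_{\overline{\ArcsFrom}(v)}$. I would define $\sigma \in \sym(\sta \cup \stb)_{\{\sta\}}$ by $\sigma\big|_{\stb} = n$ and $\sigma\big|_{\sta} = \mathrm{id}$, which meets the hypothesis $\sigma\,\col\big|_{\overline{\ArcsFrom}(v')} = \col\big|_{\overline{\ArcsFrom}(v)}$ of Lemma~\ref{lem:g_and_two_colourings}. The lemma then yields a unique $g \in (\aut \T)_{\{\Va\}}$ with $gv = v'$ and $\col = \sigma \col g$. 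The key point is that this one global identity determines every local action of $g$: for each vertex $w$ and each arc $a \in \ArcsFrom(w)$ it gives $\col(ga) = \sigma^{-1}(\col(a))$, so the local permutation $\col\big|_{\ArcsFrom(gw)}\, g\big|_{\ArcsFrom(w)}\, \col\big|_{\ArcsFrom(w)}^{-1}$ equals $\sigma^{-1}$ restricted to $\sta$ when $w \in \Va$ and to $\stb$ when $w \in \Vb$. As $\sigma^{-1}\big|_{\sta} = \mathrm{id} \in \gpa$ and $\sigma^{-1}\big|_{\stb} = n^{-1} \in \gpb$, we conclude $g \in \Universal{\gpa}{\gpb}{\col}$, so $v$ and $v'$ lie in the same orbit.

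The only substantive step is sufficiency, where one must actually build a tree automorphism realising the orbit equality; this is precisely the role of Lemma~\ref{lem:g_and_two_colourings}, and the observation that a constant $\sigma$ forces every local action of $g$ to equal $\sigma^{-1}$ is what reduces membership in $\Universal{\gpa}{\gpb}{\col}$ to the single requirement $\sigma\big|_{\sta} \in \gpa$ and $\sigma\big|_{\stb} \in \gpb$. Necessity is a routine local computation, and the remaining checks (that $\sigma$ fixes $\sta$ setwise, that the relevant distances are even, and the symmetric $\Vb$ case) are immediate.
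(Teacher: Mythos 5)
Your proposal is correct and follows essentially the same route as the paper: sufficiency extends the colour permutation to $\hat{\sigma} \in \sym(\sta \cup \stb)$ acting trivially on $\sta$, invokes Lemma~\ref{lem:g_and_two_colourings} with both colourings equal to $\col$, and observes that $\col = \hat{\sigma}\col g$ forces every local action of $g$ to be $\hat{\sigma}^{-1}$ restricted to $\sta$ or $\stb$; necessity is the same one-arc computation with the local permutation at a neighbour $u \in \Vb$ of $v$. The only (immaterial) difference is that you orient the orbit hypothesis as $n\,\col\big|_{\overline{\ArcsFrom}(v')} = \col\big|_{\overline{\ArcsFrom}(v)}$, exactly matching the lemma's hypothesis, where the paper states it in the inverse direction and silently uses symmetry of the orbit relation.
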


\begin{proof} Suppose $v, v' \in \Va$ (a symmetric argument holds for $v, v' \in \Vb$). If 
there exists $\sigma \in \gpb$ such that $\sigma \col \big|_{\overline{\ArcsFrom}(v)} = \col \big|_{\overline{\ArcsFrom}(v')}$, let $\hat{\sigma}$ be the element in $\sym(\sta \cup \stb)_{\{\stb\}}$ that is $\sigma$ on $\stb$ and trivial on $\sta$. By Lemma~\ref{lem:g_and_two_colourings}, there exists a unique $g \in (\aut \T)_{\{\Va\}}$ such that $gv = v'$ and $\col = \hat{\sigma} \col g$. If $w \in \Va$, then
$\ColouringInducedPerm{g}{w}$
$= \left ( \col  g \right ) \big|_{\ArcsFrom(w)} \col \big|^{-1}_{\ArcsFrom(w)} = \left ( \hat{\sigma}^{-1} \col \right ) \big|_{\ArcsFrom(w)} \col \big|^{-1}_{\ArcsFrom(w)} = \hat{\sigma}^{-1} \big|_{\sta} \in \gpa$,
and if 
$w \in \Vb$ then 
a similar argument shows
$\ColouringInducedPerm{g}{w} \in \gpb$.
Hence $g \in \Universal{\gpa}{\gpb}{\col}$.

Conversely, if $gv = v'$ for some $g \in \Universal{\gpa}{\gpb}{\col}$, fix $(w, v) \in \overline{\ArcsFrom}(v)$ and define
$\sigma := \ColouringInducedPerm{g}{w} \in \gpb$.
Then $\sigma \col (w, v) = \col (gw, v')$. By the definition of $\col$, we have $\col \big|_{\overline{\ArcsFrom}(v)} = \col (w, v)$ and $\col \big|_{\overline{\ArcsFrom}(v')} = \col (gw, v')$. Hence $\sigma \col \big|_{\overline{\ArcsFrom}(v)} = \col \big|_{\overline{\ArcsFrom}(v')}$.
\end{proof}

Any two legal colourings give rise to essentially the same permutation group.

\begin{proposition} \label{prop:different_col_are_conjugate} The groups $\Universal{\gpa}{\gpb}{\col}$ and $\Universal{\gpa}{\gpb}{\altcol}$ are conjugate in $\aut \T$.
\end{proposition}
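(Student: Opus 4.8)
The plan is to produce an explicit element of $\aut \T$ that conjugates $\Universal{\gpa}{\gpb}{\col}$ onto $\Universal{\gpa}{\gpb}{\altcol}$, and Lemma~\ref{lem:g_and_two_colourings} hands us exactly such an element for free. Concretely, I would fix a vertex $v \in \Va$, set $v' := v$ (so the distance between them is zero, which is even), and take $\sigma$ to be the identity. Lemma~\ref{lem:g_and_two_colourings} then supplies a unique $g \in (\aut \T)_{\{\Va\}}$ with $gv = v$ and $\col = \altcol\, g$; this $g$ is the conjugating element. The whole proof is then a verification that $g\, \Universal{\gpa}{\gpb}{\col}\, g^{-1} = \Universal{\gpa}{\gpb}{\altcol}$.

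The heart of the argument is the verification itself, and here the three basic manipulation rules listed just before Lemma~\ref{lem:g_and_two_colourings} do all the work. I would take an arbitrary $h \in \Universal{\gpa}{\gpb}{\col}$ and compute, for each vertex $w$, the local permutation induced by $ghg^{-1}$, namely $\altcol \big|_{\ArcsFrom(ghg^{-1}w)}\, (ghg^{-1}) \big|_{\ArcsFrom(w)}\, \altcol \big|_{\ArcsFrom(w)}^{-1}$, and show it equals the corresponding local permutation $\col \big|_{\ArcsFrom(h g^{-1}w)}\, h \big|_{\ArcsFrom(g^{-1}w)}\, \col \big|_{\ArcsFrom(g^{-1}w)}^{-1}$ induced by $h$ at the vertex $g^{-1}w$. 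Using rule (iii) to rewrite each occurrence of $\altcol$ in terms of $\col$ via $\col = \altcol\, g$ (equivalently $\altcol \big|_{\ArcsFrom(gu)} = \col \big|_{\ArcsFrom(u)}\, g^{-1} \big|_{\ArcsFrom(gu)}$), together with rule (ii) to collapse the conjugate $(ghg^{-1})\big|_{\ArcsFrom(\cdot)}$ into a product of restrictions, the inner copies of $g$ and $g^{-1}$ telescope and cancel. What remains is precisely the local permutation of $h$ at $g^{-1}w$, which lies in $\gpa$ (if $w \in \Va$, equivalently $g^{-1}w \in \Va$ since $g$ preserves the parts) or in $\gpb$ (if $w \in \Vb$) because $h \in \Universal{\gpa}{\gpb}{\col}$. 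This shows $g h g^{-1} \in \Universal{\gpa}{\gpb}{\altcol}$, giving the inclusion $g\,\Universal{\gpa}{\gpb}{\col}\,g^{-1} \subseteq \Universal{\gpa}{\gpb}{\altcol}$.

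For the reverse inclusion I would observe that the setup is symmetric: $g^{-1}$ satisfies $\altcol = \col\, g^{-1}$ and is the element that Lemma~\ref{lem:g_and_two_colourings} produces when the roles of $\col$ and $\altcol$ are interchanged, so the identical computation shows $g^{-1}\,\Universal{\gpa}{\gpb}{\altcol}\,g \subseteq \Universal{\gpa}{\gpb}{\col}$. Combining the two inclusions yields $g\,\Universal{\gpa}{\gpb}{\col}\,g^{-1} = \Universal{\gpa}{\gpb}{\altcol}$, which is the claim.

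The only genuine obstacle is bookkeeping: keeping track of which vertex each restriction $\big|_{\ArcsFrom(\cdot)}$ is anchored at as $g$ and $g^{-1}$ move points around, so that the telescoping cancellation in rule (ii) is applied at the correct vertices. I would guard against sign-of-the-exponent and wrong-vertex errors by carrying the vertex labels explicitly through every step and by checking the membership condition separately, but identically, on the $\Va$ and $\Vb$ parts. There is no conceptual difficulty beyond this; the existence and uniqueness half of the work is entirely subcontracted to Lemma~\ref{lem:g_and_two_colourings}.
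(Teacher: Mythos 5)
Your telescoping computation --- showing that the local action of $ghg^{-1}$ at $w'$ under $\altcol$ equals the local action of $h$ at $g^{-1}w'$ under $\col$ once $\col = \altcol\, g$ is in hand --- is correct and is exactly the paper's computation. The gap is in how you obtain $g$. Lemma~\ref{lem:g_and_two_colourings} does not hand you $v' = v$ and $\sigma = \mathrm{id}$ for free: the lemma only produces $g$ for those $\sigma$ satisfying the compatibility condition $\sigma \altcol \big|_{\overline{\ArcsFrom}(v')} = \col \big|_{\overline{\ArcsFrom}(v)}$, and with $v' = v$ and $\sigma = \mathrm{id}$ this forces $\altcol \big|_{\overline{\ArcsFrom}(v)} = \col \big|_{\overline{\ArcsFrom}(v)}$, which is generally false for two legal colourings. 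For example, let $\tau$ be a nontrivial permutation of $\stb$, let $\hat\tau \in \sym(\sta \cup \stb)_{\{\sta\}}$ be $\tau$ on $\stb$ and the identity on $\sta$, and set $\altcol := \hat\tau \col$; this is a legal colouring, and $\altcol \big|_{\overline{\ArcsFrom}(v)} = \tau\bigl(\col \big|_{\overline{\ArcsFrom}(v)}\bigr) \neq \col \big|_{\overline{\ArcsFrom}(v)}$ for every $v \in \Va$ whose in-colour is moved by $\tau$. Nor can you simply accept whatever nontrivial $\sigma$ the lemma does provide for $v' = v$: from $\col = \sigma \altcol\, g$ your own computation yields
\[
\altcol \big|_{\ArcsFrom(ghw)}\, (ghg^{-1}) \big|_{\ArcsFrom(gw)}\, \altcol \big|_{\ArcsFrom(gw)}^{-1} \;=\; \sigma^{-1} \Bigl( \col \big|_{\ArcsFrom(hw)}\, h \big|_{\ArcsFrom(w)}\, \col \big|_{\ArcsFrom(w)}^{-1} \Bigr) \sigma,
\]
and $\sigma^{-1} \gpa \sigma$ (resp.\ $\sigma^{-1} \gpb \sigma$) need not be contained in $\gpa$ (resp.\ $\gpb$), so conjugation by this $g$ need not carry $\Universal{\gpa}{\gpb}{\col}$ into $\Universal{\gpa}{\gpb}{\altcol}$ at all.

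The repair is small and is exactly what the paper does: keep $\sigma = \mathrm{id}$ but give up $v' = v$. Fix $v \in \Va$ and let $c := \col \big|_{\overline{\ArcsFrom}(v)} \in \stb$. Since $\altcol$ is legal, any $w \in \Vb$ has an arc $a \in \ArcsFrom(w)$ with $\altcol(a) = c$, and then $v' := t(a) \in \Va$ satisfies $\altcol \big|_{\overline{\ArcsFrom}(v')} = c = \col \big|_{\overline{\ArcsFrom}(v)}$. As $v, v' \in \Va$, their distance is even, so Lemma~\ref{lem:g_and_two_colourings} applies with $\sigma = \mathrm{id}$ and gives $g \in (\aut \T)_{\{\Va\}}$ with $gv = v'$ and $\col = \altcol\, g$; your verification then goes through verbatim. (A cosmetic difference: the paper runs the computation for an arbitrary $h \in (\aut \T)_{\{\Va\}}$, so the equality of local actions gives ``$ghg^{-1} \in \Universal{\gpa}{\gpb}{\altcol}$ if and only if $h \in \Universal{\gpa}{\gpb}{\col}$'' and hence both inclusions at once, whereas you argue the reverse inclusion separately by symmetry; either is fine.)
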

\begin{proof} Choose vertices $v, v' \in V\T$ such that $\col \big|_{\overline{\ArcsFrom}(v)}$ and $\altcol \big|_{\overline{\ArcsFrom}(v')}$ are equal. By Lemma~\ref{lem:g_and_two_colourings}, there is a unique element $g \in (\aut \T)_{\{\Va\}}$ such that $gv = v'$ and $\col = \altcol   g$. Choose $h \in (\aut \T)_{\{\Va\}}$, and set $h' := g h g^{-1}$.  If $w' \in V\T$, then $w:= g^{-1} w'$ and $w'$ must both lie in $\Va$, or both lie in $\Vb$, and
\begin{align*}
\altcol \big|_{\ArcsFrom(h' w')} h' \big|_{\ArcsFrom(w')} \altcol \big|_{\ArcsFrom(w')}^{-1}
	& = \altcol \big|_{\ArcsFrom(ghw)} g \big|_{\ArcsFrom(hw)} h \big|_{\ArcsFrom(w)} g^{-1} \big|_{\ArcsFrom(gw)} \altcol \big|_{\ArcsFrom(gw)}^{-1}\\
	&= \col \big|_{\ArcsFrom(hw)} h \big|_{\ArcsFrom(w)} \col \big|_{\ArcsFrom(w)}^{-1}.
\end{align*}
Hence $h' \in \Universal{\gpa}{\gpb}{\altcol}$ if and only if $h \in \Universal{\gpa}{\gpb}{\col}$.
\end{proof}

If $\gpa$ and $\gpb$ are transitive, then $\Universal{\gpa}{\gpb}{\col}$ contains an isomorphic copy of every locally-$(\gpa, \gpb)$ group.

\begin{proposition} \label{prop:H_embeds_in_some_product} If $\gpa$ and $\gpb $ are transitive, and $H \leq \aut \T$ is locally-$(\gpa, \gpb)$, then $H \leq \Universal{\gpa}{\gpb}{\col}$ for some legal colouring $\col$ of $\T$.
\end{proposition}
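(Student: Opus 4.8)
The plan is to build a single legal colouring $\col$ of $\T$ by transporting a chosen local colouring around the tree using elements of $H$, and then to verify the defining membership condition of $\Universal{\gpa}{\gpb}{\col}$ directly. First I would record that $H$ is transitive on each of $\Va$ and $\Vb$: since $\gpa$ and $\gpb$ are transitive and $H$ is locally-$(\gpa,\gpb)$, each stabiliser $H_v$ is transitive on $B(v)$, and chaining such local moves along the (even-length) geodesic between two vertices of the same part, by induction on their distance, moves one to the other. Now fix adjacent base vertices $v_0\in\Va$ and $u_0\in\Vb$. Because $H_{v_0}\big|_{B(v_0)}$ is permutation isomorphic to $\gpa$ and $H_{u_0}\big|_{B(u_0)}$ to $\gpb$, I would choose bijections $\col\big|_{\ArcsFrom(v_0)}\colon\ArcsFrom(v_0)\to\sta$ and $\col\big|_{\ArcsFrom(u_0)}\colon\ArcsFrom(u_0)\to\stb$ that conjugate the induced local actions of $H_{v_0}$ and $H_{u_0}$ \emph{exactly} onto $\gpa$ and $\gpb$ (call such a bijection \emph{admissible}). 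For every $v\in\Va$ choose $g_v\in H$ with $g_v v_0=v$ and set $\col\big|_{\ArcsFrom(v)}:=\col\big|_{\ArcsFrom(v_0)}\,g_v^{-1}\big|_{\ArcsFrom(v)}$, and define $\col$ on $\Vb$ analogously from $u_0$. Each $\col\big|_{\ArcsFrom(v)}$ is then a bijection onto $\sta$ or $\stb$, so conditions (i)--(ii) of a legal colouring hold automatically.

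Membership is then a short computation. For $g\in H$ and $v\in\Va$, writing $\col\big|_{\ArcsFrom(v)}^{-1}=g_v\big|_{\ArcsFrom(v_0)}\,\col\big|_{\ArcsFrom(v_0)}^{-1}$ and substituting the definitions, the elementary identities for restrictions listed above give
\[
\col\big|_{\ArcsFrom(gv)}\,g\big|_{\ArcsFrom(v)}\,\col\big|_{\ArcsFrom(v)}^{-1}
=\col\big|_{\ArcsFrom(v_0)}\,k\big|_{\ArcsFrom(v_0)}\,\col\big|_{\ArcsFrom(v_0)}^{-1},
\]
where $k:=g_{gv}^{-1}\,g\,g_v$ satisfies $kv_0=v_0$ and hence lies in $H_{v_0}$. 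As $\col\big|_{\ArcsFrom(v_0)}$ was chosen admissible, the right-hand side lies in $\gpa$; the symmetric statement holds for $v\in\Vb$ with $\gpb$. Thus $g\in\Universal{\gpa}{\gpb}{\col}$, proving $H\leq\Universal{\gpa}{\gpb}{\col}$ --- provided $\col$ is legal. I would emphasise here that mere admissibility of each $\col\big|_{\ArcsFrom(v)}$ would only place the displayed element in the normaliser of $\gpa$; it is the coherent transport form, which pins $k$ inside the \emph{stabiliser} $H_{v_0}$, that forces membership in $\gpa$ itself.

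The main obstacle is legality, condition (iii): the arcs terminating at a common vertex must receive the same colour. The transport definition refers to no neighbouring data, so legality will fail unless the $g_v$ are chosen coherently. The key is the leftover freedom: replacing $g_v$ by $g_v h$ with $h\in H_{v_0}$ multiplies $\col\big|_{\ArcsFrom(v)}$ on the left by an element of $\gpa$, so as $g_v$ varies the colouring $\col\big|_{\ArcsFrom(v)}$ ranges over a full $\gpa$-coset of bijections; since $\gpa$ is \emph{transitive} on $\sta$, the colour assigned to any single prescribed arc of $\ArcsFrom(v)$ can be made equal to any prescribed element of $\sta$ (and dually for $\gpb$, $\stb$ on $\Vb$). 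This is exactly where transitivity of $\gpa$ and $\gpb$ is used.

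I would therefore construct $\col$ by recursion on $d(v_0,\cdot)$. When reaching a vertex $w$ with parent $p$ (the neighbour one step nearer $v_0$), the common colour $c(w)$ of the arcs into $w$ is already forced to be $\col(p,w)$, while the back-arc $(w,p)$ must receive the value $c(p)$ fixed when $p$ was processed; by the transitivity argument I can pick $g_w$ so that $\col(w,p)=c(p)$. The single constraint at each vertex never conflicts with the others, since distinct vertices at a given distance are non-adjacent and have distinct parents, and imposing the back-arc value at every child is precisely what makes all arcs into each vertex monochromatic, so condition (iii) holds throughout. The two base vertices need only a trivial adjustment: choose $\col\big|_{\ArcsFrom(v_0)}$ and $\col\big|_{\ArcsFrom(u_0)}$ admissible and simply \emph{define} the common incoming colour at $v_0$ to be $\col(u_0,v_0)$. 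Since every choice keeps each local colouring in its transport $\gpa$- or $\gpb$-coset, the membership computation of the previous paragraph is unaffected, and the proof is complete.
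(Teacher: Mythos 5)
Your proposal is correct and is essentially the paper's own argument: fix adjacent base vertices together with bijections conjugating the local actions of their stabilisers exactly onto $\gpa$ and $\gpb$, define $\col$ at every other vertex by transporting these base colourings along elements of $H$ chosen recursively by distance from the base so that all arcs terminating at a common vertex receive one colour, and verify membership by observing that $g_{gv}^{-1}\,g\,g_v$ lies in the base-vertex stabiliser, whose conjugate is exactly $\gpa$ (resp.\ $\gpb$). The only difference is bookkeeping in the coherence step: the paper chooses each transporter in the coset $H_{v_1} h_{v_2}$ (an element of the parent's stabiliser composed with the grandparent's transporter), which makes the back-arc colours match automatically, whereas you take arbitrary transporters and then repair them using the left $\gpa$- or $\gpb$-coset freedom together with transitivity --- the same mechanism in a slightly different arrangement.
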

\begin{proof} Suppose $\gpa$ and $\gpb $ are transitive, and $H \leq \aut \T$ is locally-$(\gpa, \gpb)$, so $H$ has precisely two orbits on $VT$, acting transitively on $\Va$ and $\Vb$. Fix adjacent vertices $\va \in \Va$ and $\vb \in \Vb$. Since $H_{\va} \big|_{B(\va)}$ is permutation isomorphic to $\gpa$ there exists a bijection $\phi : B(\va) \rightarrow \sta$ such that $\phi H_{\va} \big|_{B(\va)} \phi^{-1} = \gpa$. Similarly, there exists a bijection $\psi: B(\vb) \rightarrow \stb$ such that $\psi H_{\vb} \big|_{B(\vb)} \psi^{-1} = \gpb$. For each integer $n \geq 0$, let $S_n$ be the set of vertices in $\T$ whose distance from $\va$ is precisely $n$. For $v \in \bigcup_{i\geq 2}S_i$, there is a unique 
graph path
$[\va, v]_T$ in $T$ between $\va$ and $v$; denote the unique vertex in $B(v) \cap [\va, v]_T$ by $v_{1}$ and the unique vertex in $B(v_{1}) \cap [\va, v_1]_T$ by $v_{2}$.

We inductively construct a subset $J := \{h_v : v \in VT\}$ of $H$ that we will use to define a legal colouring $\col$ such that $H \leq \Universal{\gpa}{\gpb}{\col}$. Each element $h_v \in J$ will have the property that $h_v \va = v$ if $v \in \Va$ and $h_v \vb = v$ if $v \in \Vb$.

Let $h_{\va}$ be the identity, and for each $v \in S_1$, choose $h_v \in H_{\va}$ such that $h_v \vb = v$. Notice that for all $v \in \cup_{i=0}^1S_i$, $h_v \va = v$ if $v \in \Va$ and $h_v \vb = v$ if $v \in \Vb$. Let $n \in \N$, and suppose, for each $v \in \bigcup_{i=0}^n S_{i}$, we have chosen an element $h_v \in H$ such that $h_v \va = v$ if $v \in \Va$ and $h_v \vb = v$ if $v \in \Vb$. For $v \in S_{n+1}$, choose $h \in H_{v_1}$ such that $h v_2 = v$ (which we can do because $H_{v_1}$ is transitive on $B(v_1)$) and set $h_v:= h h_{v_2}$. If $v \in \Va$, then $v_2 \in \Va$ and $h_v \va =  v$, and  similarly if $v \in \Vb$ then $h_v \vb = v$. Note that for all $v \in \bigcup_{i\geq 2} S_i$ we have $h_{v_2} h^{-1}_v = h \in H_{v_1}$, and for all $v, w \in S_1$ we have $h_w, h_v \in H_{\va}$, so $h_w^{-1}h_v \in H_{\va}$.

We now describe a colouring $\col$, and show that it is a legal colouring. Define $\col: AT: \rightarrow \sta \cup \stb$ as follows:
for $v \in \Va$ let $\col \big|_{\ArcsFrom(v)} : \ArcsFrom(v) \rightarrow \sta$ be $a \mapsto \phi h_v^{-1} t(a)$, and for $v \in \Vb$ let $\col \big|_{\ArcsFrom(v)} : \ArcsFrom(v) \rightarrow \stb$ be $a \mapsto \psi h_v^{-1} t(a)$.
It is clear that for all $v \in V\T$ the map $\col \big|_{\ArcsFrom(v)}$ is a bijection, so it remains to show that condition (\ref{def:thirdcondition}) of the definition holds; that is, the image of $\col \big|_{\overline{\ArcsFrom}(v)}$ has cardinality one for all $v \in VT$. We show this first for $\va$, then for all $v \in S_1$, and finally for all $v \in \bigcup_{n \geq 2} S_n$.

If $(w, \va)$ is any arc in $\overline{\ArcsFrom}(\va)$, then $w \in \Vb$ and $\col \big|_{\overline{\ArcsFrom}(\va)}(w, \va) = \col \big|_{\ArcsFrom(w)}(w, \va) = \psi h_w^{-1}\va = \psi \va$ (because $w \in S_1$ so $h_w \in H_{\va}$). Therefore the cardinality of the image of $\col \big|_{\overline{\ArcsFrom}(\va)}$ is one.

If $v \in S_1$ and $
(w, v) \in \overline{\ArcsFrom}(v)$, then either $w = \va$, or $w \in S_2 \subseteq \Va$. If $w = \va$ then $\col \big|_{\overline{\ArcsFrom}(v)}(w, v) = \phi v$. On the other hand, if $w \in S_2$ then $w_1 = v$ and $w_2 = \va$. Hence $\col \big|_{\overline{\ArcsFrom}(v)}(w, v) = \col \big|_{\ArcsFrom(w)}(w, v) = \phi h^{-1}_w v$ and $h_w \in H_{w_1}  h_{w_2} = H_v h_{\va} = H_v$. Thus $\col \big|_{\overline{\ArcsFrom}(v)}(w, v) = \phi v$. Therefore, if $v \in S_1$ then the cardinality of the image of $\col \big|_{\overline{\ArcsFrom}(v)}$ is always one. 

Suppose that $v \in S_n$, for some $n\geq 2$.
Choose $(w, v) \in \overline{\ArcsFrom}(v)$ such that $w \not = v_1$; then $v = w_1$ and $v_1 = w_2$. If $v \in \Va$, then $w \in \Vb$ and $\col \big|_{\overline{\ArcsFrom}(v)}(w, v) = \col \big|_{\ArcsFrom(w)}(w, v) = \psi h_w^{-1} v = \psi h_w^{-1} w_1$. Since $h_w^{-1} \in h_{w_2}^{-1} H_{w_1}$, we have $\col \big|_{\overline{\ArcsFrom}(v)}(w, v) = \psi h^{-1}_{w_2} w_1 = \psi h_{v_1}^{-1} v =  \col \big|_{\overline{\ArcsFrom}(v)}(v_1, v)$. Hence, the cardinality of the image of $\col \big|_{\overline{\ArcsFrom}(v)}$ is one. A symmetric argument shows that the same is true if $v$ is instead chosen from $\Vb$. We have thus demonstrated that $\col$ is a legal colouring.

Finally, we show $H \leq \Universal{\gpa}{\gpb}{\col}$. Choose $g \in H$ and $v \in V\T$.
First, let us suppose that $v \in \Va$. Then $gv \in \Va$, and by construction, $h_{v} \va = v$ and $h_{gv}\va = gv$. Hence $h_{gv}^{-1} g h_v \in H_{\va}$, and therefore $\sigma := (h_{gv}^{-1} g h_v) \big|_{B(\va)} \in H_{\va} \big|_{B(\va)}$. Now $\sigma = h_{gv}^{-1} \big|_{B(g h_v \va)} g \big|_{B(h_v \va)} h_v \big|_{B(\va)} = h_{gv} \big|_{B(\va)}^{-1} g \big|_{B(v)} h_v \big|_{B(\va)}$; therefore
\begin{equation} \label{eq:Sigma}
\sigma = 
h_{gv} \big|^{-1}_{B(\va)} t \big|_{A(gv)}
g \big|_{A(v)}
t \big|^{-1}_{A(v)}     h_v \big|_{B(\va)}.
\end{equation}
Moreover, by our choice of legal colouring we have
\begin{equation} \label{eq:ColouringOne}
	\col \big|_{A(v)} =
\phi \left ( h_v^{-1} \big|_{B(h_v \va)} \right ) t \big|_{A(v)} = \phi \left (  h_v \big|_{B(\va)}^{-1} \right ) t \big|_{A(v)},
\end{equation}
and
\begin{equation} \label{eq:ColouringTwo}
	\col \big|_{A(gv)} = \phi \left ( h_{gv}^{-1} \big|_{B(h_{gv} \va)} \right ) t \big|_{A(gv)} = \phi \left ( h_{gv} \big|_{B(\va)}^{-1} \right ) t \big|_{A(gv)}.
\end{equation}
Using (\ref{eq:Sigma}), (\ref{eq:ColouringOne}) and (\ref{eq:ColouringTwo}) one can easily verify that
$\ColouringInducedPerm{g}{v}$
$= \phi \sigma \phi^{-1}$. Hence $\ColouringInducedPerm{g}{v} \in \phi H_{\va}\big|_{B(\va)} \phi^{-1} = \gpa$.
Now, let us suppose instead that $v \in \Vb$. A similar argument gives us that $gv \in \Vb$ and $h_{gv}^{-1} g h_v \in H_{\vb}$, so
$\ColouringInducedPerm{g}{v}$
$\in \psi H_{\vb} \big|_{B(\vb)} \psi^{-1} = \gpb$. Therefore $g \in \Universal{\gpa}{\gpb}{\col}$.
\end{proof}

In the proposition above, the requirement that $\gpa$ and $\gpb$ be transitive is essential. For example, if $G$ is the automorphism group of the graph $\Gamma$ pictured in Figure~\ref{figure:HEmbeds}, then $G$ is transitive on the vertices of $\Gamma$ and induces a faithful action on its block-cut-vertex tree $T_\Gamma$. In its action on $T_\Gamma$, it is easily seen that the group $G$ has two orbits: one orbit consists of the $T_\Gamma$-vertices corresponding to the vertices of $\Gamma$, and the other consists of those $T_\Gamma$-vertices that correspond to the lobes of $\Gamma$. Choose any pair $v$ and $w$ of adjacent vertices in $T_\Gamma$, and let $\gpa$ denote $G_v \big|_{B(v)}$ and $\gpb$ denote $G_w \big|_{B(w)}$. If it were possible to find a legal colouring $\col$ of $T_\Gamma$ such that $G \leq \Universal{\gpa}{\gpb}{\col}$, then this would contradict Proposition~\ref{prop:products_orbits}, because neither $\gpa$ nor $\gpb$ are transitive.
Another example is the following: if $C_2$ denotes the cyclic group of order $2$ acting on three points, and $T_3$ denotes the trivalent tree, then for any end $\xi$ of $T_3$ the end stabiliser $(\aut T_3)_\xi$ has $C_2$ as its local action but $(\aut T_3)_\xi$ is clearly not conjugate to a subgroup of the Burger--Mozes group $U(C_2) \leq \aut T_3$.

\begin{figure}[h]  
\centering
\includegraphics[width=120mm]{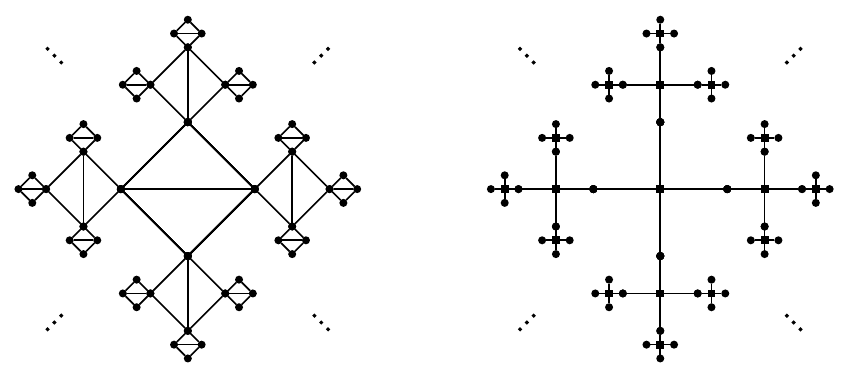}
\caption{An infinite vertex-transitive graph $\Gamma$ (left) and its block-cut-vertex tree $T_\Gamma$ (right)}
\label{figure:HEmbeds}
\end{figure}

\begin{lemma} \label{lem:product_locally_MN} $\Universal{\gpa}{\gpb}{\col}$ is locally-$(\gpa, \gpb)$.
\end{lemma}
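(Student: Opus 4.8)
The plan is to verify the two defining conditions of a locally-$(\gpa,\gpb)$ group directly against the definition of $\Universal{\gpa}{\gpb}{\col}$. The first condition—that the group fixes the parts $\Va$ and $\Vb$ setwise—is immediate, since $\Universal{\gpa}{\gpb}{\col} \leq (\aut\T)_{\{\Va\}}$ by construction. The substance of the lemma is the second condition: for each $v \in VT$ the induced permutation group $(\Universal{\gpa}{\gpb}{\col})_v\big|_{B(v)}$ must be permutation isomorphic to $\gpa$ when $v \in \Va$ and to $\gpb$ when $v \in \Vb$. I would treat the case $v \in \Va$ in full, the case $v \in \Vb$ being symmetric.

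First I would pin down the candidate permutation isomorphism. The bijection $\col\big|_{\ArcsFrom(v)} : \ArcsFrom(v) \to \sta$, composed with the natural bijection $\ArcsFrom(v) \to B(v)$ given by $a \mapsto t(a)$, yields a bijection $\theta : B(v) \to \sta$. A routine check—using that each $g \in (\Universal{\gpa}{\gpb}{\col})_v$ fixes $v$ and hence satisfies $g\,t(a) = t(ga)$—shows that conjugation by $\theta$ carries the induced permutation $g\big|_{B(v)}$ to $\col\big|_{\ArcsFrom(v)}\, g\big|_{\ArcsFrom(v)}\, \col\big|_{\ArcsFrom(v)}^{-1}$ acting on $\sta$. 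The defining membership condition of $\Universal{\gpa}{\gpb}{\col}$, specialised to $gv = v$, says precisely that this element lies in $\gpa$. Thus $\theta$ already exhibits $(\Universal{\gpa}{\gpb}{\col})_v\big|_{B(v)}$ as permutation isomorphic to a \emph{subgroup} of $\gpa$.

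It remains to prove surjectivity, namely that every $\alpha \in \gpa$ is realised, and this is the only real work. Here I would mimic the construction used in the proof of Proposition~\ref{prop:products_orbits}. Given $\alpha \in \gpa$, let $\hat\alpha \in \sym(\sta\cup\stb)_{\{\sta\}}$ act as $\alpha$ on $\sta$ and trivially on $\stb$. Because $v \in \Va$, the single colour $\col\big|_{\overline{\ArcsFrom}(v)}$ lies in $\stb$, so $\hat\alpha$ fixes it and we have $\hat\alpha\,\col\big|_{\overline{\ArcsFrom}(v)} = \col\big|_{\overline{\ArcsFrom}(v)}$. Applying Lemma~\ref{lem:g_and_two_colourings} with $v' = v$, with the same colouring $\col$, and with $\sigma = \hat\alpha$ (legitimate since $d(v,v) = 0$ is even) produces a unique $g \in (\aut\T)_{\{\Va\}}$ satisfying $gv = v$ and $\col = \hat\alpha\,\col\, g$, equivalently $\col g = \hat\alpha^{-1}\col$.

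Finally I would confirm that this $g$ lies in $\Universal{\gpa}{\gpb}{\col}$ and induces $\alpha^{-1}$ on $B(v)$. For any $w \in VT$ the elementary manipulation rules preceding Lemma~\ref{lem:g_and_two_colourings} give $\col\big|_{\ArcsFrom(gw)}\, g\big|_{\ArcsFrom(w)} = (\col g)\big|_{\ArcsFrom(w)} = (\hat\alpha^{-1}\col)\big|_{\ArcsFrom(w)}$, so that $\col\big|_{\ArcsFrom(gw)}\, g\big|_{\ArcsFrom(w)}\, \col\big|_{\ArcsFrom(w)}^{-1}$ is just the restriction of $\hat\alpha^{-1}$ to the image of $\col\big|_{\ArcsFrom(w)}$. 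For $w \in \Va$ this restriction is $\alpha^{-1} \in \gpa$, and for $w \in \Vb$ it is the identity, which lies in $\gpb$; hence $g \in \Universal{\gpa}{\gpb}{\col}$. Taking $w = v$ shows that $g$ induces $\alpha^{-1}$ on $B(v)$ under $\theta$, and as $\alpha$ ranges over $\gpa$ so does $\alpha^{-1}$, giving the required surjectivity. The only delicate point in the whole argument is keeping the bookkeeping of restricted bijections straight in this last computation; everything else is formal.
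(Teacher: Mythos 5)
Your proof is correct and takes essentially the same route as the paper: the induced group embeds into $\gpa$ by conjugating with the colouring bijection, and surjectivity is obtained by applying Lemma~\ref{lem:g_and_two_colourings} to $\hat\alpha$ and $\col$ itself, exactly as the paper does. The only cosmetic differences are that you work directly on $B(v)$ rather than on $\ArcsFrom(v)$ (the paper passes between the two at the very end), and your literal application of the lemma realises $\alpha^{-1}$ rather than $\alpha$, which you correctly note is harmless.
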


\begin{proof} Fix $v \in V\T$.
If $v \in \Va$ (resp. $v \in \Vb$) we claim the
 map $g \big|_{\ArcsFrom(v)} \mapsto \ColouringInducedPerm{g}{v}$
 is an isomorphism from $\left ( \Universal{\gpa}{\gpb}{\col}  \right )_v \big|_{\ArcsFrom(v)}$ to $\gpa$
(resp. $\gpb$). We prove this only for $v \in \Va$ since a similar argument works for $v \in \Vb$.

 It is clear this map is a monomorphism. Fix $\sigma \in \gpa$, and let $\hat{\sigma} \in \sym(\sta \cup \stb)$ be the permutation that equals $\sigma$ on $\sta$ and is the identity on $\stb$. Now $\hat{\sigma} \col$ is a legal colouring, and Lemma~\ref{lem:g_and_two_colourings} guarantees that there exists a unique element $g \in (\aut \T)_{\{\Va\}}$ such that $g v = v$ and $\col g = \hat{\sigma} \col$.
Hence, for all $w \in V\T$ we have  
$\ColouringInducedPerm{g}{w}$
$=  \hat{\sigma} \big|_{\col (\ArcsFrom(w))}$, and from this it follows that $g \in \left ( \Universal{\gpa}{\gpb}{\col}  \right )_v$ and
$\ColouringInducedPerm{g}{v} = \sigma$.
Finally, we observe that the permutation groups induced by $\left ( \Universal{\gpa}{\gpb}{\col}  \right )_v$ on $\ArcsFrom(v)$ and $B(v)$ are permutation isomorphic.
\end{proof}

\begin{lemma} \label{lem:product_is_closed} If $\gpa$ is a closed subgroup of $\sym(\sta)$ and $\gpb$ is a closed subgroup of $\sym(\stb)$, then $\Universal{\gpa}{\gpb}{\col}$ is a closed subgroup of $\aut \T$.
\end{lemma}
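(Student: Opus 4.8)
The plan is to show directly that $\Universal{\gpa}{\gpb}{\col}$ contains all of its limit points inside $\aut \T$. So I would take $g \in \aut \T$ lying in the closure (relative to $\aut \T$) of $U := \Universal{\gpa}{\gpb}{\col}$ and aim to prove $g \in U$. Recall from Section~\ref{subsection:perm_topology} that a basis of neighbourhoods of the identity is given by pointwise stabilisers of finite vertex sets; consequently, for every finite $\Psi \subseteq V\T$ there is an element $g' \in U$ agreeing with $g$ on $\Psi$. The key structural observation is that membership in $U$ is \emph{local at each vertex}: it is governed by the condition $g \in (\aut \T)_{\{\Va\}}$ together with, for every vertex $v$, the requirement that the induced permutation $\sigma_v := \col \big|_{\ArcsFrom(gv)} \, g \big|_{\ArcsFrom(v)} \, \col \big|_{\ArcsFrom(v)}^{-1}$ lie in $\gpa$ (if $v \in \Va$) or in $\gpb$ (if $v \in \Vb$). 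Note that $\sigma_v$ is a genuine element of $\sym(\sta)$ or $\sym(\stb)$, being a composite of three bijections, so the whole argument reduces to verifying these conditions one vertex at a time.

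First I would check that $g$ preserves the bipartition. Given any $v \in \Va$, choose $g' \in U$ agreeing with $g$ on the single vertex $v$; then $gv = g'v \in \Va$ since $g'$ preserves $\Va$, and symmetrically for $v \in \Vb$, so $g \in (\aut \T)_{\{\Va\}}$. Next, fix $v \in \Va$ (the case $v \in \Vb$ being symmetric) and show $\sigma_v \in \gpa$. Since $\gpa$ is closed, it suffices to show $\sigma_v$ lies in the closure of $\gpa$ in $\sym(\sta)$, i.e. that for every finite $F \subseteq \sta$ some element of $\gpa$ agrees with $\sigma_v$ on $F$. Because $\col \big|_{\ArcsFrom(v)}$ is a bijection onto $\sta$, the colours in $F$ correspond to a finite set $W$ of neighbours of $v$; applying the closure hypothesis to the finite set $\Psi := \{v\} \cup W$ yields $g' \in U$ agreeing with $g$ there. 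Then $g'v = gv$, and $g'$ sends each arc $a \in \ArcsFrom(v)$ with $\col(a) \in F$ to the same arc as $g$ does, so the permutation $\sigma'_v$ induced by $g'$ agrees with $\sigma_v$ on $F$. As $g' \in U$ we have $\sigma'_v \in \gpa$, giving the required approximation, whence $\sigma_v \in \overline{\gpa} = \gpa$. The same reasoning gives $\sigma_v \in \gpb$ for $v \in \Vb$, and therefore $g \in U$.

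The main obstacle is precisely that the tree may have infinite valency, so the local condition at a vertex $v$ constrains $g$ on the possibly infinite set $B(v)$ and cannot be certified merely by agreement on one finite neighbourhood. The resolution, and the only place the hypotheses are genuinely used, is that closedness of $\gpa$ and $\gpb$ reduces membership of $\sigma_v$ to agreement on arbitrary \emph{finite} colour-sets $F$, each of which pulls back to a finite set of arcs already matched by a suitable nearby $g' \in U$. Everything else is routine bookkeeping with the bijections $\col \big|_{\ArcsFrom(v)}$, using the elementary identities recorded before Lemma~\ref{lem:g_and_two_colourings}.
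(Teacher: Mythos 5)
Your proof is correct and is essentially the paper's argument run in the complementary direction: the paper shows that the complement of $\Universal{\gpa}{\gpb}{\col}$ in $(\aut \T)_{\{\Va\}}$ is open, while you show that every closure point lies in the group, but both hinge on exactly the same device — pulling a finite colour set $F \subseteq \sta$ back through the bijection $\col \big|_{\ArcsFrom(v)}$ to a finite set of neighbours of $v$, and then invoking closedness of $\gpa$ (resp.\ $\gpb$) in the permutation topology. No gaps.
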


\begin{proof}
One may easily verify that $A:=(\aut T)_{\{\Va\}}$ is a closed subgroup of $\aut T$. Suppose $\gpa$ and $\gpb$ are closed. The two maps $\chi_w : (\aut T)_{\{\Va\}} \rightarrow \sym(\sta)$ and $\chi_w : (\aut T)_{\{\Va\}} \rightarrow \sym(\stb)$ given by $g \mapsto \ColouringInducedPerm{g}{w}$ for, respectively, $w \in \Va$ and $w \in \Vb$, are continuous by Lemma~\ref{lemma:ColourMapContinuous}. Therefore the preimages $\chi_w^{-1}(\gpa)$ for $w \in \Va$ and $\chi_w^{-1}(\gpb)$ for $w \in \Vb$ are closed. In Remark~\ref{remark:DefnOfUinTermsOfTheta}, it is noted that $\Universal{\gpa}{\gpb}{\col}$ can be written as the intersection over all vertices $w \in VT$ of these preimages. Thus $\Universal{\gpa}{\gpb}{\col}$, being the intersection of closed sets, is closed.
\end{proof}

We next prove that $\Universal{\gpa}{\gpb}{\col}$ has a subgroup isomorphic to $\gpa$ and a subgroup isomorphic to $\gpb$. These subgroups are contained in point stabilisers in $\Universal{\gpa}{\gpb}{\col}$. 

If we are given $\mu \in \gpa$, let $\hat{\mu} \in \sym(\sta \cup \stb)$ be such that $\hat{\mu} \big|_\sta = \mu$ and $\hat{\mu} \big|_\stb = 1_{\gpb}$. By Lemma~\ref{lem:g_and_two_colourings}, for each vertex $v \in \Va$ there is a unique automorphism
$g_{\mu, v} \in (\aut T)_{\{\Va\}}$ 
such that $g_{\mu, v} v = v$ and $\col = \hat{\mu} \col g_{\mu, v}$. One may quickly verify that $g_{\mu, v} \in \Universal{\gpa}{\gpb}{\col}$. For $v \in \Vb$ and $\mu \in \gpb$ define $g_{\mu, v}$ similarly.

\begin{proposition} \label{iso_subgroups} If $v \in \Va$ and $w \in \Vb$, then $\hat{\gpa}(v) := \{g_{\mu, v} : \mu \in \gpa\}$ is a subgroup of $\Universal{\gpa}{\gpb}{\col}_v$ and $\hat{\gpb}(w) := \{g_{\tau, w} : \tau \in \gpb\}$ is a subgroup of $\Universal{\gpa}{\gpb}{\col}_w$.
Moreover, $\hat{\gpa}(v)$ is isomorphic to $\gpa$, and $\hat{\gpb}(w)$ is isomorphic to $\gpb$.
\end{proposition}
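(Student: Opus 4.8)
The plan is to show that the assignment $\mu \mapsto g_{\mu, v}$ transports the group structure of $\gpa$ onto $\hat{\gpa}(v)$, and symmetrically for $\gpb$, $w$; since the two halves are identical after interchanging $(\sta, \gpa)$ with $(\stb, \gpb)$, I would write out only the case $v \in \Va$, $\mu \in \gpa$. The workhorse is the defining relation $\col = \hat{\mu}\,\col\, g_{\mu,v}$ together with the observation that the ``hat'' map $\mu \mapsto \hat{\mu}$ is an injective homomorphism $\gpa \to \sym(\sta \cup \stb)_{\{\stb\}}$ (it acts as $\mu$ on $\sta$ and trivially on $\stb$, so $\hat{\mu}\hat{\nu} = \widehat{\mu\nu}$), and of course the existence-and-uniqueness statement of Lemma~\ref{lem:g_and_two_colourings}.

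First I would record a product formula. Rewriting the defining relation as $\col\, g_{\mu,v} = \hat{\mu}^{-1}\col$ and composing, one computes $\col\, (g_{\mu,v} g_{\nu,v}) = \hat{\mu}^{-1}\hat{\nu}^{-1}\col = (\widehat{\nu\mu})^{-1}\col$; since $g_{\mu,v} g_{\nu,v}$ fixes $v$ and lies in $(\aut T)_{\{\Va\}}$, the uniqueness clause of Lemma~\ref{lem:g_and_two_colourings} forces
\[ g_{\mu,v}\, g_{\nu,v} = g_{\nu\mu,\, v}. \]
Taking $\nu = \mu^{-1}$, and noting that $g_{1_\gpa, v}$ is the identity (again by uniqueness, since $\hat{1}$ is trivial), yields $g_{\mu,v}^{-1} = g_{\mu^{-1}, v}$. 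Hence $\hat{\gpa}(v)$ is closed under products and inverses and contains the identity, so it is a subgroup; and because each $g_{\mu,v}$ fixes $v$ and already lies in $\Universal{\gpa}{\gpb}{\col}$ (as noted just before the statement), $\hat{\gpa}(v)$ is in fact a subgroup of $\Universal{\gpa}{\gpb}{\col}_v$, as required.

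Next I would verify injectivity of $\mu \mapsto g_{\mu,v}$: if $g_{\mu,v} = g_{\nu,v}$, then the two defining relations give $\hat{\mu}\,\col\, g_{\mu,v} = \hat{\nu}\,\col\, g_{\mu,v}$, and cancelling the bijection $g_{\mu,v}$ of arcs on the right gives $\hat{\mu}\,\col = \hat{\nu}\,\col$; since $\col$ is surjective onto $\sta \cup \stb$, this forces $\hat{\mu} = \hat{\nu}$ and hence $\mu = \nu$. The product formula shows $\mu \mapsto g_{\mu,v}$ is an \emph{anti}-homomorphism, so to present an honest isomorphism I would instead use $\mu \mapsto g_{\mu^{-1}, v}$, which by the product formula sends $\mu\nu$ to $g_{(\mu\nu)^{-1},v} = g_{\nu^{-1}\mu^{-1},v} = g_{\mu^{-1},v}\, g_{\nu^{-1},v}$ and is a bijection onto $\hat{\gpa}(v)$; thus $\hat{\gpa}(v) \cong \gpa$. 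Interchanging the roles of $\sta, \gpa$ with $\stb, \gpb$ gives the corresponding statements for $\hat{\gpb}(w)$.

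The only delicate point — and the step I would take most care over — is the bookkeeping in the product formula: getting the composition order right under the left-action convention so that the reversal $g_{\mu,v}\, g_{\nu,v} = g_{\nu\mu,v}$ (rather than $g_{\mu\nu,v}$) emerges correctly, and then remembering to compose with inversion so that the final correspondence is a genuine isomorphism rather than an anti-isomorphism. Everything else reduces to routine applications of Lemma~\ref{lem:g_and_two_colourings}.
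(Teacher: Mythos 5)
Your proposal is correct and follows essentially the same route as the paper's own proof: the paper likewise derives the reversed product formula $g_{\mu,v}\,g_{\tau,v} = g_{\tau\mu,v}$ from the uniqueness clause of Lemma~\ref{lem:g_and_two_colourings} (noting, as you implicitly do via the hat map, that $\hat{\tau}\hat{\mu}$ fixes $\stb$ pointwise so the compatibility hypothesis of the lemma holds), deduces $g_{e,v} = 1$ and $g_{\mu,v}^{-1} = g_{\mu^{-1},v}$, proves injectivity of $\mu \mapsto g_{\mu,v}$, and handles the anti-homomorphism issue by composing with inversion. Indeed, the paper's isomorphism $\varphi : \hat{\gpa}(v) \rightarrow \gpa$, $g_{\mu,v} \mapsto \mu^{-1}$, is exactly the inverse of your map $\mu \mapsto g_{\mu^{-1},v}$.
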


\begin{proof} Suppose $v \in \Va$. If $\mu, \tau \in \gpa$, then $g_{\mu, v} g_{\tau, v} v = v = g_{\tau \mu, v} v$ and $\hat{\tau} \hat{\mu} \col g_{\tau \mu, v} = \col = \hat{\tau} (\hat{\mu} \col g_{\mu, v}) g_{\tau, v}$. Since $\hat{\tau} \hat{\mu} $ fixes $\stb$ pointwise, we have $\hat{\tau} \hat{\mu} \col \big|_{\overline{A}(v)} = \col \big|_{\overline{A}(v)}$. It follows immediately from Lemma~\ref{lem:g_and_two_colourings} that $g_{\tau \mu, v} = g_{\mu, v} g_{\tau, v}$. Similarly, it follows from Lemma~\ref{lem:g_and_two_colourings} that if $e$ denotes the identity in $\gpa$, then $g_{e, v} = 1 \in \Universal{\gpa}{\gpb}{\col}$. Finally, we have $g_{\mu, v} g_{\mu^{-1}, v} = g_{\mu^{-1}\mu, v} = g_{e, v} = 1 \in \Universal{\gpa}{\gpb}{\col}$, so $g_{\mu, v}^{-1} = g_{\mu^{-1}, v}$. Hence $\hat{\gpa}(v) \leq \Universal{\gpa}{\gpb}{\col}_v$.  A similar argument holds for $w \in \Vb$ and $\hat{\gpb}(w)$. 

Since $\mu, \tau \in \gpa$, we have $g_{\mu, v} = g_{\tau, v}$ if and only if $\mu = \tau$. Indeed, if $\mu = \tau$, then $\hat{\tau} \col g_{\tau, v} = \col = \hat{\mu} \col g_{\mu, v} = \hat{\tau} \col g_{\mu, v}$, and so $g_{\tau, v} = g_{\mu, v}$ by Lemma~\ref{lem:g_and_two_colourings}. On the other hand, if $g_{\tau, v} = g_{\mu, v}$ then $1 = g_{\mu, v} g_{\tau, v}^{-1} = g_{\tau^{-1} \mu, v}$. Hence $\col = \hat{\tau}^{-1} \hat{\mu} \col g_{\tau^{-1}\mu, v} = \hat{\tau}^{-1} \hat{\mu} \col$, so $\tau$ and $\mu$ must be equal.

Let $\varphi : \hat{\gpa}(v) \rightarrow M$ be given by $g_{\mu, v} \mapsto \mu^{-1}$. This map is obviously surjective, and we have shown it to be well-defined and injective.
For $\mu, \tau \in \gpa$ we have $g_{\mu, v} g_{\tau, v} = g_{\tau \mu, v}$, so $\varphi$ is a homomorphism. A symmetric argument shows that $\hat{\gpb}(w)$ is isomorphic to $\gpb$.
\end{proof}

We now give two
lemmas,
presented without proof, which will be used in the following sections. The first lemma is an immediate consequence of Proposition~\ref{prop:products_orbits} and Lemma~\ref{lem:product_locally_MN}.

\begin{lemma} \label{lemma:edge_orbits} Suppose $v_1, v_2 \in \Va$ and $w_1, w_2 \in \Vb$. Edges $\{v_1, w_1\}$ and $\{v_2, w_2\}$ in $\T$ lie in the same orbit of $\Universal{\gpa}{\gpb}{\col}$ if and only if $v_1, v_2$ lie in the same orbit of $\Universal{\gpa}{\gpb}{\col}$ and $w_1, w_2$ lie in the same orbit of $\Universal{\gpa}{\gpb}{\col}$. \qed
\end{lemma}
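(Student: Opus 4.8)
The plan is to write $U := \Universal{\gpa}{\gpb}{\col}$ and treat the two implications separately, with the forward direction being essentially formal and the reverse direction carrying all the content. For the forward direction I would use only that $U \leq (\aut \T)_{\{\Va\}}$, so $U$ fixes each part of the bipartition setwise. Thus if some $g \in U$ carries the edge $\{v_1, w_1\}$ to $\{v_2, w_2\}$, then, since $v_1, v_2 \in \Va$ while $w_1, w_2 \in \Vb$, we are forced to have $gv_1 = v_2$ and $gw_1 = w_2$. This immediately places $v_1, v_2$ in a common $U$-orbit and $w_1, w_2$ in a common $U$-orbit, with no appeal to the cited results.

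For the reverse direction, assume $v_1, v_2$ lie in one $U$-orbit and $w_1, w_2$ in another. First I would choose $g \in U$ with $gv_1 = v_2$, which exists by hypothesis; this carries $\{v_1, w_1\}$ to $\{v_2, gw_1\}$, so in particular $gw_1 \in B(v_2)$. What remains is to ``rotate'' $gw_1$ onto $w_2$ while fixing $v_2$, that is, to find $h \in U_{v_2}$ with $h(gw_1) = w_2$; then $hg \in U$ sends $\{v_1, w_1\}$ to $\{v_2, w_2\}$, as required. Note that both $gw_1$ and $w_2$ are neighbours of $v_2$, and both lie in the same $U$-orbit as $w_1$ (the former because $g \in U$, the latter by hypothesis), so $gw_1$ and $w_2$ lie in a common $U$-orbit.

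The step requiring care --- and the only place the two cited results enter --- is converting ``$gw_1$ and $w_2$ lie in a common $U$-orbit'' into ``$gw_1$ and $w_2$ lie in a common $U_{v_2}$-orbit on $B(v_2)$'', since membership in a common global orbit is a priori weaker than membership in a common suborbit. By Proposition~\ref{prop:products_orbits}, the $U$-orbit equality of $gw_1$ and $w_2$ (both lying in $\Vb$) yields that $\col \big|_{\overline{\ArcsFrom}(gw_1)}$ and $\col \big|_{\overline{\ArcsFrom}(w_2)}$ lie in a common orbit of $\gpa$. By condition~(\ref{def:thirdcondition}) of a legal colouring these constants coincide with the colours $\col(v_2, gw_1)$ and $\col(v_2, w_2)$ of the arcs emanating from $v_2$ towards $gw_1$ and $w_2$ respectively; equivalently, under the bijection $\col \big|_{\ArcsFrom(v_2)} : \ArcsFrom(v_2) \to \sta$ the neighbours $gw_1$ and $w_2$ correspond to $\gpa$-equivalent points of $\sta$. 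By Lemma~\ref{lem:product_locally_MN} the group $U_{v_2}$ induces on $B(v_2)$, via exactly this bijection, a permutation group isomorphic to all of $\gpa$, so a suitable $h \in U_{v_2}$ with $h(gw_1) = w_2$ does exist. I expect the main obstacle to be precisely this identification of the neighbour-colour seen from $v_2$ with the terminating-arc constant at the neighbour: it is the bridge that lets the global orbit data of Proposition~\ref{prop:products_orbits} feed into the local action of Lemma~\ref{lem:product_locally_MN}, and once that bridge is in place the conclusion follows at once.
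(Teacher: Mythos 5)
Your proof is correct and follows exactly the route the paper intends: the paper states this lemma without proof as ``an immediate consequence'' of Proposition~\ref{prop:products_orbits} and Lemma~\ref{lem:product_locally_MN}, and your argument --- bipartition-preservation for the forward direction, then transporting $v_1$ to $v_2$, using Proposition~\ref{prop:products_orbits} together with condition~(\ref{def:thirdcondition}) of a legal colouring to identify the relevant arc-colours as $\gpa$-equivalent, and invoking the surjectivity of the local action from Lemma~\ref{lem:product_locally_MN} to rotate $gw_1$ onto $w_2$ --- is precisely the fleshed-out version of that deduction. No gaps.
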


Recall that $\T_a$ and $\T_{\overline{a}}$ are the two half-trees of $\T \setminus \{a, \overline{a}\}$. To prove the following lemma, simply check that the obvious choice for $g \in \aut T$ lies in $\Universal{\gpa}{\gpb}{\col}$.

\begin{lemma} \label{lem:prod_has_independence} If $a$ is any arc in $\T$, then for all $h \in \Universal{\gpa}{\gpb}{\col}_{a}$ there exists $g \in \Universal{\gpa}{\gpb}{\col}$ such that $g$ fixes $\T_{\overline{a}}$ pointwise and $g \big|_{\T_a} = h \big|_{\T_a}$. \qed
\end{lemma}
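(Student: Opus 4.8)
The plan is to take the ``obvious'' $g$ suggested by the statement: define $g$ to fix $\T_{\overline{a}}$ pointwise and to coincide with $h$ on $\T_a$. First I would check that this prescription yields a well-defined element of $\aut \T$. Since $h \in \Universal{\gpa}{\gpb}{\col}_{a}$ stabilises the arc $a = (o(a), t(a))$, it fixes both $o(a)$ and $t(a)$ and hence preserves each half-tree of $\T \setminus \{a, \overline{a}\}$; in particular $h$ maps $\T_a$ to itself. Thus the two pieces of the definition are compatible at the overlap (both $g$ and the identity fix $o(a)$ and $t(a)$), and $g$ is a bijection of $V\T$ that preserves adjacency, because the only edge joining the two half-trees is $\{o(a), t(a)\}$ and $g$ fixes both of its endpoints.

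It then remains to verify $g \in \Universal{\gpa}{\gpb}{\col}$, i.e.\ that for every vertex $v$ the local permutation $\col \big|_{\ArcsFrom(gv)} g \big|_{\ArcsFrom(v)} \col \big|_{\ArcsFrom(v)}^{-1}$ lies in $\gpa$ (if $v \in \Va$) or in $\gpb$ (if $v \in \Vb$). I would split into cases according to where $v$ sits relative to $a$. For $v \in \T_{\overline{a}}$ the element $g$ fixes $v$ and every neighbour of $v$: neighbours lying in $\T_{\overline{a}}$ are fixed by construction, and in the single case $v = t(a)$ the remaining neighbour $o(a)$ is fixed because $h$ fixes $o(a)$; hence $g \big|_{\ArcsFrom(v)}$ is the identity and the local permutation is trivial, so it belongs to $\gpa$ or $\gpb$. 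For $v \in \T_a$ with $v \neq o(a)$, all neighbours of $v$ lie in $\T_a$, so $g \big|_{\ArcsFrom(v)} = h \big|_{\ArcsFrom(v)}$ and $gv = hv$, whence the local permutation equals the one produced by $h$, which lies in $\gpa$ or $\gpb$ since $h \in \Universal{\gpa}{\gpb}{\col}$.

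The only point needing care, and the crux of the argument, is the seam vertex $v = o(a)$. Here I would observe that $g$ and $h$ induce the \emph{same} permutation on $\ArcsFrom(o(a))$: the arcs running into $\T_a$ are treated identically because $g = h$ there, and the single remaining arc $(o(a), t(a))$ is fixed by both $g$ and $h$, each of which fixes $o(a)$ and $t(a)$. Since also $g\, o(a) = o(a) = h\, o(a)$, the local permutation $\col \big|_{\ArcsFrom(g\, o(a))} g \big|_{\ArcsFrom(o(a))} \col \big|_{\ArcsFrom(o(a))}^{-1}$ coincides with that of $h$ and hence lies in the appropriate group. This exhausts all vertices, so $g \in \Universal{\gpa}{\gpb}{\col}$, and by construction $g$ fixes $\T_{\overline{a}}$ pointwise and agrees with $h$ on $\T_a$. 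I expect the verification at the two boundary vertices $o(a)$ and $t(a)$ to be the only genuinely non-routine part: everywhere else the defining condition either transfers verbatim from $h$ or is trivially satisfied.
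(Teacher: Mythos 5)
Your proof is correct and takes exactly the approach the paper intends: the paper gives no written proof, remarking only that one should ``simply check that the obvious choice for $g \in \aut \T$ lies in $\Universal{\gpa}{\gpb}{\col}$'', and your piecewise definition of $g$ together with the case analysis at the seam vertices $o(a)$ and $t(a)$ is precisely that verification, carried out in full.
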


For a permutation group $H \leq \sym(\Omega)$ together with some $H$-invariant subset $\Delta \subseteq \Omega$, it is not true in general that the quotient group $H/H_{(\Delta)}$, viewed as a topological group under the quotient topology, is topologically isomorphic to the induced permutation group $H \big|_\Delta$ under its permutation topology. However, for our universal groups we have the following.

\begin{proposition} \label{prop:quotient_iso_to_M} Suppose $\gpa$ and $\gpb$ are closed, with $G:= \Universal{\gpa}{\gpb}{\col}$. If $v \in \Va$ (resp. $v \in \Vb$) then $G_v/G_{(B(v))}$ under the quotient topology is topologically isomorphic to $\gpa$ (resp. $\gpb$) under its permutation topology.
\end{proposition}
\begin{proof} We show that $H := G_v \big|_{B(v)}$ is topologically isomorphic to $G_v/G_{(B(v))}$; the result then follows from Lemma~\ref{lem:product_locally_MN}. 
If $\pi : G_v \rightarrow G_v/G_{(B(v))}$ denotes the quotient map, then the map $\Theta : G_v/G_{(B(v))} \rightarrow H$ given by $\pi(g) \mapsto g \big|_{B(v)}$ is an abstract isomorphism, and it is a routine exercise to check that $\Theta$ is continuous.
To prove that $\Theta^{-1}$ is continuous, it is sufficient to show that for all finite sets $F \subseteq VT$, there is a finite set $F' \subseteq B(v)$ such that $(G_v)_{(F)} \big|_{B(v)}  = (G_v \big|_{B(v)})_{(F')}$.

Given $F \subseteq VT$, let $F'$ be the (possibly empty) set consisting of all vertices in $B(v)$ that lie on a geodesic between $v$ and some vertex in $F$. Let $C$ be the connected component of $T \setminus F'$ containing $v$. Clearly, $(G_v)_{(F)} \big|_{B(v)} \leq (G_v \big|_{B(v)})_{(F')}$. On the other hand, if $g'\big|_{B(v)} \in (G_v \big|_{B(v)})_{(F')}$,
then $g'$ fixes $C$ setwise, and so there exists an automorphism $g$ of $T$ such that $g$ and $g'$ agree on $C$ and $g$ fixes $T\setminus C$ pointwise. One may easily verify that $g \in (G_v)_{(F)}$ and $g \big|_{B(v)} = g' \big|_{B(v)}$.
\end{proof}

%
%
\section{Simplicity}
\label{section:Simplicity}

In his influential paper \cite{tits}, Jacques Tits introduced the following {\em property (P)}, which is sometimes known as {\em Tits' independence property}. Let $G$ act on a (not necessarily locally-finite) tree $T$. If $\mathcal{P}$ is a non-empty finite or infinite 
graph path
in $T$, for each vertex $v$ in $T$ there is a unique vertex $\pi_{\mathcal{P}}(v)$ in $\mathcal{P}$ that is closest to $v$. This gives rise to a well-defined map on $VT$, in which each vertex $v$ is mapped to $\pi_{\mathcal{P}}(v)$. For each vertex $q$ in $\mathcal{P}$, the set $\pi^{-1}_{\mathcal{P}}(q)$ of vertices in $T$ that are mapped to $q$ by $\pi_{\mathcal{P}}$ is the vertex set of a subtree of $T$. The pointwise stabiliser $G_{(\mathcal{P})}$ of $\mathcal{P}$ leaves each of these subtrees invariant, and so we can define $G^q_{(\mathcal{P})}$ to be the subgroup of $\sym(\pi^{-1}_{\mathcal{P}}(q))$ induced by $G_{(\mathcal{P})}$. Thus, we have homomorphisms $\varphi_q: G_{(\mathcal{P})} \rightarrow G_{(\mathcal{P})}^q$ for each $q \in V\mathcal{P}$ from which we obtain the natural homomorphism,
\begin{equation} \label{eq:propertyP}
\varphi: G_{(\mathcal{P})} \rightarrow \prod_{q \in V\mathcal{P}} G_{(\mathcal{P})}^q.
\end{equation}
The group $G$ is said to have property (P) if the homomorphism (\ref{eq:propertyP}) is an isomorphism for every possible choice of $\mathcal{P}$. Intuitively, property (P) means that $G_{(\mathcal{P})}$ acts independently on each of the subtrees branching from $\mathcal{P}$.

\begin{theorem}[{\cite[Th\'{e}or\`{e}me 4.5]{tits}}] \label{theorem:tits} Suppose $T$ is a
tree. If $G \leq \aut T$ satisfies property (P), no proper non-empty subtree of $T$ is invariant under $G$, and no end of $T$ is fixed by $G$, then the group
$G^+ := \langle G_{(v, w)} : \{v, w\} \in ET \rangle$
is simple (and possibly trivial).
\qed
\end{theorem}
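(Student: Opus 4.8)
The plan is to show that every nontrivial normal subgroup $N$ of $G^+$ equals $G^+$. Throughout, for an arc $a$ of $T$ write $U_a$ for the pointwise stabiliser in $G$ of the half-tree $T_{\overline a}$. Since an automorphism fixing every vertex of $T_{\overline a}$ fixes $o(a)$ together with all of its neighbours except possibly $t(a)$, it must fix $t(a)$ as well; hence $U_a \leq G_{(o(a), t(a))}$ and $U_a$ acts only on $T_a$. Applying property (P) to the path $\mathcal{P}$ whose vertex set is the single edge $\{o(a), t(a)\}$ identifies $\pi_{\mathcal{P}}^{-1}(o(a))$ with $VT_a$ and $\pi_{\mathcal{P}}^{-1}(t(a))$ with $VT_{\overline a}$, and so yields the internal direct decomposition $G_{(o(a),t(a))} = U_a \times U_{\overline a}$. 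Consequently $G^+ = \langle U_a : a \in AT\rangle$. Two elementary observations will drive the argument: conjugation permutes these groups, $g U_a g^{-1} = U_{ga}$ for $g \in G$; and nesting is respected, $T_b \subseteq T_a$ implies $U_b \leq U_a$. Note also that $G^+ \trianglelefteq G$, since $G$ permutes the set of edge-fixators.

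First I would establish that the hypotheses force an abundance of hyperbolic elements. A fixed vertex, an invariant line, or an inverted edge each produces a proper nonempty $G$-invariant subtree, while a fixed end is excluded by hypothesis; by the standard classification of group actions on trees, $G$ must therefore fix no end and contain a hyperbolic element. This property passes to nontrivial normal subgroups: if $1 \neq H \trianglelefteq G$, then the (possibly empty) common fixed-vertex set and the set of fixed ends of $H$ are each $G$-invariant, so by the hypotheses they are empty, and $H$ too contains a hyperbolic element. Applying this first to $G^+ \trianglelefteq G$ (assuming $G^+ \neq 1$, the degenerate case being permitted) and then to $N \trianglelefteq G^+$, I obtain a hyperbolic element lying inside $N$.

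The heart of the proof is to show that $N$ contains a half-tree fixator. Pick $1 \neq f \in N$. Since $f$ moves some vertex, there is an edge $\{z, y\}$ with $z$ fixed by $f$ and $fy \neq y$; taking $b = (y, z)$, the half-trees $T_b$ and $fT_b = T_{fb}$ lie on distinct branches at $z$ and are therefore \emph{disjoint}. For every $u \in U_b$ the commutator $[f, u]$ lies in $N$ (because $N \trianglelefteq G^+$ gives $[N, G^+] \leq N$), and it factors as $(fuf^{-1})\, u^{-1}$ with the two factors supported on the disjoint half-trees $T_{fb}$ and $T_b$. Property (P) now lets one disentangle these two independent contributions and promote the family $\{[f,u] : u \in U_b\}$ to the whole of $U_b$ inside $N$; the hyperbolic element of $N$ supplies, for arcs of every $G^+$-orbit, some $f \in N$ and an arc $b$ realising such a disjoint configuration. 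I expect this disentangling step --- upgrading the commutators to the entire fixator $U_b$ using only property (P) and conjugation, with no appeal to any topology on $G$ --- to be the main obstacle, since it is precisely here that Tits' independence property does its essential work.

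Once $U_b \leq N$ for a single arc $b$, the conclusion follows quickly. Conjugating by $G^+$ gives $U_{gb} \leq N$ for every $g \in G^+$, and combining this with a hyperbolic element of $G^+$ that pushes $T_b$ strictly into itself, together with the nesting relation, propagates $U_a \leq N$ to every arc $a$ of $T$. As the $U_a$ generate $G^+$, this forces $N = G^+$. Since an arbitrary nontrivial normal subgroup has thereby been shown to be the whole group, $G^+$ is simple (unless it is trivial), as claimed.
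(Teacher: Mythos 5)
The paper itself does not prove this theorem; it is quoted from \cite{tits} with a \qed, so your attempt has to be measured against Tits' own argument. Your framework does match his: property (P) applied to the one-edge path gives $G_{(o(a),t(a))} = U_a \times U_{\overline a}$ and hence $G^+ = \langle U_a : a \in AT\rangle$; the hypotheses plus the classification of actions on trees produce hyperbolic elements in $G$, in $G^+$, and in any nontrivial $N$ normalized by $G^+$; and the endgame (conjugation plus nesting of half-trees) is standard. Two smaller inaccuracies before the main point: your configuration ``$z$ fixed by $f$, a neighbour $y$ moved'' requires $f$ to be \emph{elliptic}, and nothing established so far guarantees that $N$ contains any nontrivial elliptic element --- the element your classification step produces is hyperbolic (one can still get disjoint half-trees $T_b$ and $fT_b$ from a hyperbolic $f$ by taking $b$ to point at its axis from an off-axis branch, but this must be said); also, ``the set of ends fixed by $H$ is $G$-invariant, hence empty'' needs the uniqueness argument (a group with no hyperbolic element and no fixed vertex fixes exactly one end), since $G$-invariance of a set of several ends is not excluded by the hypothesis.

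The genuine gap is exactly the step you flagged: promoting $\{[f,u] : u \in U_b\} \subseteq N$ to $U_b \leq N$. Property (P) cannot ``disentangle'' this. Property (P) says that the natural map $G_{(\mathcal{P})} \to \prod_q G^q_{(\mathcal{P})}$ is an isomorphism of groups; it does \emph{not} say that $N \cap G_{(\mathcal{P})}$ is the product of its projections, and the projection of an element of $N$ to a single branch need not lie in $N$ --- think of a diagonal subgroup of a direct product, which contains no nontrivial element of either factor. What your disjoint-support configuration actually yields is the Epstein double-commutator identity $[[u,f],v]=[u,v]$ for $u,v \in U_b$ (valid because $fu^{-1}f^{-1}$ is supported on $fT_b$ and so commutes with $v$), whence $[U_b,U_b] \leq N$; this is strictly weaker, since nothing in the hypotheses prevents $U_b$ from being abelian, in which case it gives nothing at all. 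Tits closes precisely this gap by a different mechanism: take $h \in N$ hyperbolic with axis $C$, and apply property (P) to the \emph{infinite} path $C$ --- crucially its surjectivity half --- to realise the formally infinite product $z = \prod_{n \geq 0} h^n u h^{-n}$ as an honest element of $G_{(C)} \leq G^+$, for $u \in G_{(C)}$ supported on the branches along one half of $C$ (well defined because each branch of $C$ meets only finitely many factors). Then $z = u\,(h z h^{-1})$, so $u = [z,h] \in N$; splitting a general $u \in G_{(C)}$ into its positive and negative parts (again using (P)) gives $G_{(C)} \leq N$, and a final minimality argument (every half-tree contains the axis of some hyperbolic element of $N$, since the union of such axes is a $G^+$-invariant subtree) converts fixators of axes into the half-tree fixators $U_a$. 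This telescoping construction is the real content of the theorem, and it is the piece missing from your proposal.
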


\begin{theorem} \label{thm:prod_has_prop_P} Let $\sta$ and $\stb$ be finite or infinite sets whose cardinality is at least two. Suppose $\gpa \leq \sym(\sta)$ and $\gpb \leq \sym(\stb)$ are two permutation groups, and let $\T$ be the $(|\sta|, |\stb|)$-biregular tree. If $\col$ is a
legal colouring with $\sta$ and $\stb$, 
then $\Universal{\gpa}{\gpb}{\col}$ satisfies Tits' independence property (P).
\end{theorem}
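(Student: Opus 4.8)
The map $\varphi$ in (\ref{eq:propertyP}) is injective for any $G \leq \aut T$ and any path $\mathcal{P}$: the fibres $\pi_{\mathcal{P}}^{-1}(q)$, $q \in V\mathcal{P}$, partition $VT$, so an element of $G_{(\mathcal{P})}$ is completely determined by its restrictions to these fibres. The entire content of property (P) is therefore the \emph{surjectivity} of $\varphi$, and this is what I would establish for $G := \Universal{\gpa}{\gpb}{\col}$. The geometric fact to record first is that in a tree the only edges of $T$ joining two distinct fibres $\pi_{\mathcal{P}}^{-1}(q)$ and $\pi_{\mathcal{P}}^{-1}(q')$ are the edges of $\mathcal{P}$ itself: if $\{u,v\} \in ET$ with $\pi_{\mathcal{P}}(u) = q \neq q' = \pi_{\mathcal{P}}(v)$, then tracing the geodesics from $u$ and from $v$ to $\mathcal{P}$ forces $u = q$, $v = q'$, and $q, q'$ adjacent on $\mathcal{P}$. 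Thus each $\pi_{\mathcal{P}}^{-1}(q)$ is a subtree meeting $\mathcal{P}$ only in $q$, and distinct fibres are joined solely along $\mathcal{P}$.

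To prove surjectivity, I would take any family $(h_q)_{q \in V\mathcal{P}}$ with $h_q \in G_{(\mathcal{P})}$ and define $g \colon VT \to VT$ by $gw := h_q w$ whenever $w \in \pi_{\mathcal{P}}^{-1}(q)$. Since each $h_q$ fixes $\mathcal{P}$ pointwise it preserves $\pi_{\mathcal{P}}$, hence each fibre, so $g$ is a well-defined bijection that restricts to $h_q$ on each fibre and fixes $\mathcal{P}$ pointwise. It is an automorphism of $T$: edges inside a fibre are preserved because $h_q \in \aut T$, and the remaining edges are those of $\mathcal{P}$, which $g$ fixes; as each $h_q$ preserves the bipartition, so does $g$, giving $g \in (\aut T)_{\{\Va\}}$.

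The crux is to verify $g \in G$, i.e.\ the local condition defining $\Universal{\gpa}{\gpb}{\col}$ at every vertex. Fix $w \in VT$ and put $q := \pi_{\mathcal{P}}(w)$; I claim $gw = h_q w$ and $g\big|_{\ArcsFrom(w)} = h_q\big|_{\ArcsFrom(w)}$. If $w \notin \mathcal{P}$, then by the geometric fact every neighbour of $w$ lies in the fibre of $q$, so $g$ and $h_q$ agree on $w$ and on all of $B(w)$. If $w = q \in \mathcal{P}$, then $g$ and $h_q$ both fix $w$ and both fix the $\mathcal{P}$-neighbours of $w$, while they agree on the neighbours of $w$ lying off $\mathcal{P}$ (which lie in the fibre of $q$); so again $g\big|_{\ArcsFrom(w)} = h_q\big|_{\ArcsFrom(w)}$. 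In either case
\[
\col\big|_{\ArcsFrom(gw)}\, g\big|_{\ArcsFrom(w)}\, \col\big|_{\ArcsFrom(w)}^{-1}
= \col\big|_{\ArcsFrom(h_q w)}\, h_q\big|_{\ArcsFrom(w)}\, \col\big|_{\ArcsFrom(w)}^{-1},
\]
which lies in $\gpa$ or $\gpb$ (according to the part containing $w$) because $h_q \in G$. Hence $g \in G_{(\mathcal{P})}$ with $\varphi(g) = (h_q)_q$ under the identifications of (\ref{eq:propertyP}), so $\varphi$ is onto.

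The one genuinely load-bearing point is the identity $g\big|_{\ArcsFrom(w)} = h_{\pi_{\mathcal{P}}(w)}\big|_{\ArcsFrom(w)}$ at vertices $w$ lying on $\mathcal{P}$, where one must check that the contributions from the $\mathcal{P}$-edges and from the off-path branches match up; this is exactly where the tree geometry (distinct fibres meeting only along $\mathcal{P}$, and the $h_q$ fixing $\mathcal{P}$ pointwise) does the work. Everything else---well-definedness, the automorphism property, preservation of the bipartition, and injectivity of $\varphi$---is routine bookkeeping. I would also note that this argument is the global strengthening of Lemma~\ref{lem:prod_has_independence}, which handles the case of a single arc, and that the proof is insensitive to whether $\mathcal{P}$ (or the valencies of $T$) is finite or infinite.
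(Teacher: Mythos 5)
Your proof is correct, and it takes a genuinely different (and in one respect more complete) route than the paper. The paper's proof works in the opposite direction: given $g \in G_{(\mathcal{P})}$ and a vertex $p$ of $\mathcal{P}$, it applies Lemma~\ref{lem:prod_has_independence} twice --- once for each arc of $\mathcal{P}$ at $p$ --- to manufacture an element $g_p \in G$ that agrees with $g$ on $\pi^{-1}_{\mathcal{P}}(p)$ and fixes everything else, and then concludes from the decomposition $g = \prod_{p} g_p$ that $\varphi$ is an isomorphism, with the final verification left to the reader. You instead prove surjectivity head-on: glue an arbitrary tuple $(h_q)_{q}$ into a single automorphism and re-verify the defining colouring condition of $\Universal{\gpa}{\gpb}{\col}$ vertex by vertex, using the fact that $g$ agrees with $h_{\pi_{\mathcal{P}}(w)}$ on all of $\ArcsFrom(w)$. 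What the paper's route buys is brevity, since it reuses an already-stated lemma; what yours buys is self-containment and rigor at exactly the point the paper elides. When $\mathcal{P}$ is infinite, ``$\prod_p g_p$'' is not a finite product of group elements, so surjectivity onto the full unrestricted product $\prod_q G^q_{(\mathcal{P})}$ requires checking that the pointwise-glued automorphism actually lies in $G$ --- this is precisely the ``easily verified'' step in the paper, and it is precisely your crux computation. Your argument also makes transparent the real reason the theorem holds with no hypotheses on $\gpa$ and $\gpb$ (in particular no closedness): membership in $\Universal{\gpa}{\gpb}{\col}$ is a purely local, one-vertex-at-a-time condition, so it is preserved under gluing along the fibres of $\pi_{\mathcal{P}}$. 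Note too that your verification is essentially the omitted proof of Lemma~\ref{lem:prod_has_independence} carried out in the general setting, so nothing is lost by bypassing that lemma.
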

\begin{proof} Let $\Path$ be any non-empty
graph path
in $\T$ and let $G$ denote $\Universal{\gpa}{\gpb}{\col}$. Suppose we are given  $g \in G_{(\Path)}$ and $p \in V\Path$. Note that $B(p)$ contains either one or two vertices in $\Path$, so we may write $B(p) \cap V\Path = \{q, q'\}$, where $q, q'$ could be equal. By Lemma~\ref{lem:prod_has_independence} there exists $g' \in G$ such that $g'$ fixes $T_{(q, p)}$ pointwise and agrees with $g$ on $T_{(p, q)}$. The element $g'$ fixes $\Path$ pointwise, and so in particular it lies in $G_{(p, q')}$, and so we may apply the lemma again to deduce the existence of $g_p \in G$ such that $g_p$ fixes $T_{(q', p)}$ pointwise and agrees with $g'$ on $T_{(p, q')}$. The element $g_p$ therefore fixes $V\T \setminus \pi^{-1}(p)$ pointwise, agrees with $g$ on $\pi^{-1}(p)$, and satisfies $g_p \big|_{\pi^{-1}(p)} \in G_{(\Path)}^p$.

Now $g = \prod_{p \in V\Path} g_p$, and so the map given by $g \mapsto \prod_{p \in V\Path} g_p \big|_{\pi^{-1}(p)}$ is a homomorphism from $G_{(\Path)}$ to $\prod_{p \in V\Path} G_{(\Path)}^p$. It is easily verified that this map is an isomorphism, from which it follows that $G$ has property (P).
\end{proof}

There are many mild conditions under which $\Universal{\gpa}{\gpb}{\col}$ leaves invariant no proper non-empty subtree of $T$ and fixes
no end of $T$. For example: $\gpa$ and $\gpb$ contain no fixed points.\\

We will presently cite a result in
Jean-Pierre Serre's
book \cite{serre:trees}, but we must do so with care since Serre's definition of a graph differs from ours in that loops (arcs $a$ that satisfy $o(a) = t(a)$) and multiple arcs (distinct arcs $a, a'$ that satisfy $o(a) = o(a')$ and $t(a) = t(a')$) are permitted. Trees in \cite{serre:trees} cannot contain loops nor multiple arcs, and so we may use the term tree without ambiguity (see \cite[pp. 13--17]{serre:trees}).

A group $G$ acts on a tree $T$ {\em without inversion} if there is no edge $\{v, w\} \in T$ for which $(v, w)$ lies in the orbit $G(w, v)$. Suppose $T$ is a tree and $G$ acts on $T$ without inversion. Following \cite[pp. 25]{serre:trees}, $G \backslash T$ consists of a set of vertices and a set of edges. The vertices (resp. edges) are the orbits of $G$ on the vertices (resp. edges) of $T$. Notions like adjacency and arcs extend naturally to $G \backslash T$. Note that $G \backslash T$ may not be a graph (according to our definition), since there may be more than one edge between two given vertices.

\begin{theorem}[{\cite[Corollary 1 in Section 5.4]{serre:trees}}] \label{thm:serre} Suppose $T$ is a tree, and let $G$ be a group acting on $T$ without inversion. Let $R$ be the group generated by all vertex stabilisers $G_v$, $v \in VT$. Then $R$ is a normal subgroup of $G$, and $G/R$ is isomorphic to the fundamental group of $G \backslash T$. \qed
\end{theorem}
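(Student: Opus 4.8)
\emph{The plan} is to establish the two assertions separately: normality of $R$ is a one-line orbit computation, while the identification of $G/R$ will be obtained by constructing an explicit map $G \to \pi_1(G\backslash T)$, reading off most of its properties directly, and isolating a single inclusion as the genuine difficulty.

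First I would dispose of normality. For any $g \in G$ and any $v \in VT$ we have $g G_v g^{-1} = G_{gv}$, which is again a vertex stabiliser. Since $R$ is generated by the family $\{G_v : v \in VT\}$ and conjugation by $g$ merely permutes this family (via $v \mapsto gv$), it carries $R$ onto itself; hence $gRg^{-1} = R$ for every $g$, so $R \trianglelefteq G$.

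Next I would build the comparison map. Write $Y := G\backslash T$ and let $p : T \to Y$ be the quotient; since $T$ is connected, so is $Y$, and the fundamental group $\pi_1(Y,v_0)$ of the (one-dimensional) graph $Y$ is free. Fix a lift $\tilde v_0$ of a base vertex $v_0$. For $g \in G$ the vertices $\tilde v_0$ and $g\tilde v_0$ lie in one $G$-orbit, so $p$ sends the geodesic $[\tilde v_0, g\tilde v_0]_T$ to a closed edge-walk at $v_0$; let $\Psi(g)$ be its class in $\pi_1(Y,v_0)$. Because cancelling a backtrack $e\overline{e}$ in $T$ projects to cancelling $p(e)\overline{p(e)}$ in $Y$, the class $\Psi(g)$ is unchanged when the geodesic is replaced by any walk from $\tilde v_0$ to $g\tilde v_0$. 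Concatenating $[\tilde v_0, g\tilde v_0]_T$ with $g\,[\tilde v_0, h\tilde v_0]_T$ and using $p\circ g = p$ then gives $\Psi(gh)=\Psi(g)\Psi(h)$, so $\Psi$ is a homomorphism; lifting an arbitrary closed walk at $v_0$ edge by edge ends at some vertex of $p^{-1}(v_0)=G\tilde v_0$, i.e.\ at $g\tilde v_0$, whence $\Psi(g)$ is the given class and $\Psi$ is onto. Moreover $R \subseteq \ker\Psi$ is easy: if $g \in G_v$, then with $c=[\tilde v_0,v]_T$ the walk $c$ followed by $g\overline{c}$ runs from $\tilde v_0$ to $g\tilde v_0$ and projects (again using $p\circ g = p$) to $p(c)\overline{p(c)}$, which is null-homotopic, so $\Psi(g)=1$; as $R$ is generated by such stabilisers, $\Psi$ factors through a surjection $\overline{\Psi}:G/R \to \pi_1(Y)$.

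The hard part will be the reverse inclusion $\ker\Psi \subseteq R$, which is exactly where the tree hypothesis must do real work. To handle it I would choose a spanning tree $T_0$ of $Y$, lift it to a subtree $S\subseteq T$ carried isomorphically onto $T_0$ by $p$ and containing $\tilde v_0$, and for each oriented edge $y$ of $Y$ select $t_y\in G$ moving the relevant lift back into $S$, with $t_y=1$ when $y\in T_0$. A fundamental-domain argument—this is precisely the Bass--Serre structure theorem, Serre's Theorem~13 on which the quoted corollary depends—then shows that $G$ is generated by $\bigcup_{v\in S}G_v$ together with the $t_y$, subject to the graph-of-groups relations. Passing to $G/R$ kills every vertex generator (and with them the edge relations, whose images lie in vertex groups) and kills $t_y$ for $y\in T_0$, leaving the \emph{free} group on $\{t_y : y\notin T_0\}$; since $\overline{\Psi}$ sends each such $t_y$ to the corresponding free generator $[y]$ of $\pi_1(Y)$, it matches free bases and is therefore an isomorphism, giving $G/R\cong\pi_1(G\backslash T)$. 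The crux, and the only nonformal step, is the bookkeeping of the lift $S$ and the coset representatives $t_y$ that yields this presentation of $G$.
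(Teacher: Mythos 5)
The paper offers no proof of this statement at all: it is quoted verbatim from Serre (\cite[Corollary 1 in Section 5.4]{serre:trees}) and stamped with a \verb|\qed|, so the only meaningful benchmark is Serre's own derivation --- and your argument is essentially that derivation, correctly executed, plus one genuinely useful addition. Normality via $gG_vg^{-1}=G_{gv}$ is right. For the isomorphism, you invoke the Bass--Serre structure theorem to present $G$ by the vertex groups $G_v$ ($v\in S$), the elements $t_y$, the edge relations, and $t_y=1$ for $y$ in the spanning tree; killing the vertex groups also trivialises the edge relations (their two sides lie in vertex groups) and leaves the free group on the edges outside the spanning tree, which is a free basis of $\pi_1(G\backslash T)$. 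This is exactly how Serre deduces the corollary from his structure theorem, and leaning on that theorem is not circular, since it is a stronger independent result of which the present statement is a consequence. Your map $\Psi$, defined by projecting the geodesics $[\tilde v_0, g\tilde v_0]_T$ to closed walks in $G\backslash T$, is the part that goes beyond the citation: it gives a canonical surjection $G\to\pi_1(G\backslash T)$ with $R\subseteq\ker\Psi$, so you can finish by matching free bases ($t_yR\mapsto[y]$) rather than merely comparing ranks of free groups. Two small points deserve to be made explicit but are not genuine gaps: (i) adding the relations $G_v=1$ for $v\in S$ to the presentation computes $G/R$ only because $R$ equals the normal closure of $\bigcup_{v\in S}G_v$, which holds since $S$ meets every $G$-orbit of vertices and $gG_vg^{-1}=G_{gv}$; (ii) surjectivity of $\Psi$ tacitly uses that every edge of $G\backslash T$ incident to $v_0$ lifts to an edge of $T$ incident to $\tilde v_0$, which follows because $p^{-1}(v_0)$ is a single $G$-orbit.
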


It is an immediate consequence of this theorem that $G = R$ if and only if $G \backslash T$ is a tree
which holds if and only if $G$ is generated by the vertex groups of $G \backslash T$
(see \cite[Exercise 2 in Section 5.4]{serre:trees}, for example).

As before, let $\sta$ and $\stb$ be finite or infinite sets with cardinality at least two, $\gpa \leq \sym(\sta)$ and $\gpb \leq \sym(\stb)$, denote the $(|\sta|, |\stb|)$-biregular tree by $T$ and choose some legal colouring $\col$.
Our next lemma follows from Proposition~\ref{prop:products_orbits} and Lemma~\ref{lemma:edge_orbits}. In particular, we have that $\Universal{\gpa}{\gpb}{\col} \backslash \T$ contains no loops or multiple edges.

\begin{lemma} \label{lem:QuotientIsAGraph} If $\cda$ (resp. $\cdb$) denotes the number of orbits of $\gpa$ (resp. $\gpb$), then $\Universal{\gpa}{\gpb}{\col} \backslash \T$ is the complete bipartite graph $K_{m, n}$. \qed
\end{lemma}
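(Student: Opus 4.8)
The plan is to identify the quotient graph $\Universal{\gpa}{\gpb}{\col} \backslash \T$ explicitly by describing its vertices and edges in terms of the orbit data of $\gpa$ and $\gpb$, then recognising the result as the complete bipartite graph $K_{m,n}$. Write $G := \Universal{\gpa}{\gpb}{\col}$. Since $G$ preserves the bipartition $(\Va, \Vb)$ of $\T$, every $G$-orbit of vertices is contained in either $\Va$ or $\Vb$, so the vertices of $G \backslash \T$ split into two classes. First I would count these: by Proposition~\ref{prop:products_orbits}, two vertices of $\Va$ lie in the same $G$-orbit precisely when their ``incoming colours'' $\col \big|_{\overline{\ArcsFrom}(\cdot)}$ lie in the same $\gpb$-orbit, and as the incoming colour ranges over all of $\stb$ (by legality of $\col$ together with the biregularity and transitivity of the tree action), the $G$-orbits on $\Va$ are in bijection with the orbits of $\gpb$ on $\stb$. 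Hence there are exactly $\cdb$ vertex-orbits inside $\Va$. Symmetrically, there are exactly $\cda$ vertex-orbits inside $\Vb$. This gives $\cda + \cdb$ vertices in the quotient, partitioned into the two sides of size $\cdb$ and $\cda$.

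Next I would analyse the edges. Because $G$ acts on $\T$ without inversion (the two endpoints of any edge lie in different parts of the bipartition, which $G$ preserves, so $(v,w)$ and $(w,v)$ can never be in the same $G$-orbit), the quotient $G \backslash \T$ is a well-defined object in Serre's sense and each edge of the quotient joins a vertex on the $\Va$-side to a vertex on the $\Vb$-side; in particular the quotient is bipartite with the two sides just described. The key computation is Lemma~\ref{lemma:edge_orbits}: two edges $\{v_1, w_1\}$ and $\{v_2, w_2\}$ of $\T$ (with $v_i \in \Va$, $w_i \in \Vb$) lie in the same $G$-orbit if and only if $v_1, v_2$ lie in the same $G$-orbit and $w_1, w_2$ lie in the same $G$-orbit. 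This says precisely that the orbit of an edge is determined by the ordered pair consisting of the orbit of its $\Va$-endpoint and the orbit of its $\Vb$-endpoint, so distinct $G$-edge-orbits correspond to distinct such pairs.

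It then remains to verify that \emph{every} pair of orbits is realised, i.e.\ that for each $\Va$-orbit $O$ and each $\Vb$-orbit $O'$ there is some edge of $\T$ with one endpoint in $O$ and the other in $O'$. This follows because $\T$ is $(|\sta|,|\stb|)$-biregular and the colouring $\col$ realises, around any fixed vertex, all colours in $\sta$ (resp.\ $\stb$); concretely, fix $v \in O$ and observe that its $|\sta|$ neighbours carry all colours in $\sta$ via $\col \big|_{\ArcsFrom(v)}$, and by Proposition~\ref{prop:products_orbits} (applied to the $\Vb$-side) the $G$-orbits of these neighbours realise all $\cda$ orbits of $\gpa$, hence all $\cda$ of the $\Vb$-vertex-orbits. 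Thus between the $\Va$-side (of size $\cdb$) and the $\Vb$-side (of size $\cda$) there is exactly one edge-orbit for each of the $\cda \cdot \cdb$ pairs, with no loops (bipartiteness) and no multiple edges (each pair realised by a single edge-orbit by Lemma~\ref{lemma:edge_orbits}). This is exactly the structure of $K_{\cda,\cdb}$, completing the identification.

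I expect the main obstacle to be the surjectivity step of the last paragraph, namely confirming that every pair of vertex-orbits actually supports an edge-orbit; the injectivity and the vertex count are more or less immediate from the two cited results, but one must argue carefully that the transitivity of the tree action together with the legality of $\col$ forces all combinations of incoming/outgoing colours — and hence all combinations of the corresponding $\gpa$- and $\gpb$-orbits — to occur simultaneously at some edge. Since the lemma is stated without proof in the excerpt, I would keep this verification brief, citing Proposition~\ref{prop:products_orbits} and Lemma~\ref{lemma:edge_orbits} and remarking that the absence of loops and multiple edges (needed for the intended application) is exactly what bipartiteness and the edge-orbit characterisation deliver.
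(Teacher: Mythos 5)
Your proposal is correct and follows essentially the same route as the paper, which derives the lemma from exactly the two results you cite: Proposition~\ref{prop:products_orbits} (vertex-orbits on $\Va$ and $\Vb$ correspond to $\gpb$- and $\gpa$-orbits of incoming colours, all of which are realised) and Lemma~\ref{lemma:edge_orbits} (edge-orbits are determined by, and realise, pairs of vertex-orbits), with bipartiteness excluding loops. One small remark: your parenthetical appeal to ``transitivity of the tree action'' is unnecessary (and not assumed, since $\gpa$ and $\gpb$ may be intransitive); the surjectivity of incoming colours onto $\stb$ (resp.\ $\sta$) follows purely from the legality of $\col$, namely that $\col\big|_{\ArcsFrom(w)}$ is a bijection together with condition~(\ref{def:thirdcondition}).
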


If $\gpa$ and $\gpb$ are transitive, then Lemma~\ref{lem:QuotientIsAGraph} and \cite[Section 4.1, Theorem 6]{serre:trees} imply that $G:= \Universal{\gpa}{\gpb}{\col}$ has an amalgamated free product structure,
\[G = G_u \ast_{G_{(u, w)}}  G_v,\]
where $u$ and $v$ are any two adjacent vertices in $\T$.\\

Recall that $\Boxproduct{\gpa}{\gpb}{}$ is the subgroup of $\sym(\Vb)$ that is induced by
 $\Universal{\gpa}{\gpb}{}$ acting on $V\T$,
and $\Boxproduct{\gpa}{\gpb}{} \cong \Universal{\gpa}{\gpb}{}$ as topological groups.
Following \cite{mollervonk}, we say a permutation group $G \leq \sym(\VV)$ is {\em generated by point stabilisers} if
$G = \langle G_v : v \in \VV \rangle$.

\begin{theorem} \label{theorem:simplicity} Suppose $\gpa$ and $\gpb$ are permutation groups of (not necessarily finite) degree at least two, both groups are generated by point stabilisers and at least one group is nontrivial. Then
$\Universal{\gpa}{\gpb}{}$
is simple if and only if $\gpa$ or $\gpb$ is transitive.
\end{theorem}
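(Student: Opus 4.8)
The plan is to transfer the whole question to the universal group $G := \Universal{\gpa}{\gpb}{\col}$, whose structure is already well understood from the previous section. First I would record that the induced action of $G$ on $\Vb$ is faithful: if $g \in G$ fixes $\Vb$ pointwise and $v \in \Va$, then $gv \in \Va$ satisfies $B(gv) = gB(v) = B(v)$, so $v$ and $gv$ would share at least $|B(v)| \ge 2$ neighbours; since $T$ is a tree this forces $gv = v$, and hence $g = 1$. Thus $\Boxproduct{\gpa}{\gpb}{} \cong G$ as abstract groups, and it suffices to decide when $G$ is simple. The engine for the forward direction is Tits' Theorem~\ref{theorem:tits}. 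Since $G$ already has property (P) by Theorem~\ref{thm:prod_has_prop_P}, once I verify that $G$ leaves invariant no proper non-empty subtree and fixes no end, Tits yields that $G^{+} := \langle G_{(v,w)} : \{v,w\} \in ET\rangle$ is simple (possibly trivial). The remaining, and decisive, step is to prove $G = G^{+}$.

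For the minimality hypotheses, assume $\gpa$ is transitive (the case of $\gpb$ is symmetric); by Proposition~\ref{prop:products_orbits} this makes $G$ transitive on $\Vb$. No end is fixed: for any $v \in \Va$ the local group $G_v\big|_{B(v)} \cong \gpa$ is transitive on the $\ge 2$ neighbours of $v$, so some $g \in G_v$ moves the first neighbour on a hypothetical fixed ray $[v,\epsilon)_T$, forcing $g\epsilon \neq \epsilon$. No proper non-empty invariant subtree $S$ exists: if $S$ meets $\Vb$, transitivity on $\Vb$ forces $\Vb \subseteq S$, and connectedness then drags in every $\Va$-vertex (each joins two of its $\Vb$-neighbours through itself), so $S = T$; and $S$ cannot be a single vertex, since a group transitive on the infinite set $\Vb$ cannot fix a vertex (all of $\Vb$ would have to be equidistant from it). Hence Tits applies.

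The heart of the matter, and the step I expect to be the main obstacle, is the identity $G = G^{+}$, where both standing hypotheses are used. I would first show $G_v \le G^{+}$ for every vertex. Fix $v \in \Va$ and $g \in G_v$, and use the subgroup $\hat{\gpa}(v) \cong \gpa$ inside $G_v$ supplied by Proposition~\ref{iso_subgroups}. Whenever $\sigma \in \gpa$ fixes a colour $\alpha \in \sta$, the element $g_{\sigma,v}$ fixes $v$ and the neighbour coloured $\alpha$, hence lies in an arc stabiliser and so in $G^{+}$; as $\gpa = \langle \gpa_\alpha : \alpha \in \sta\rangle$ these elements generate $\hat{\gpa}(v)$, giving $\hat{\gpa}(v) \le G^{+}$. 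Because $\hat{\gpa}(v)$ already realises all of $\gpa$ on $B(v)$, I may choose $h \in \hat{\gpa}(v)$ with $h\big|_{B(v)} = g\big|_{B(v)}$; then $gh^{-1}$ fixes $v$ and $B(v)$ pointwise, so it fixes some arc $(v,w)$ and lies in $G^{+}$, whence $g = (gh^{-1})h \in G^{+}$. The symmetric argument using $\gpb = \langle \gpb_\beta\rangle$ handles $v \in \Vb$, so $R := \langle G_v : v \in VT\rangle \le G^{+}$. Finally, since $\gpa$ is transitive, Lemma~\ref{lem:QuotientIsAGraph} gives $G\backslash T = K_{1,n}$, which is a tree, so Serre's Theorem~\ref{thm:serre} yields $G = R$. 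Therefore $G = R \le G^{+} \le G$, i.e. $G = G^{+}$, which is simple and nontrivial (a transitive group on $\ge 2$ points being nontrivial).

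For the converse, suppose neither $\gpa$ nor $\gpb$ is transitive, so both have at least two orbits and $G\backslash T = K_{m,n}$ with $m, n \ge 2$ by Lemma~\ref{lem:QuotientIsAGraph}. This graph is not a tree, so Serre's Theorem~\ref{thm:serre} shows the normal subgroup $R = \langle G_v\rangle$ is proper in $G$; and $R$ is nontrivial because some $G_v\big|_{B(v)}$ is isomorphic to whichever of $\gpa, \gpb$ is nontrivial. Hence $G$ has a proper nontrivial normal subgroup and is not simple, so neither is $\Boxproduct{\gpa}{\gpb}{}$. The delicate point throughout is the inclusion $R \subseteq G^{+}$, which relies precisely on point-stabiliser generation together with the explicit subgroups $\hat{\gpa}(v), \hat{\gpb}(w)$ to lift local elements into genuine arc stabilisers, while the complementary reduction $G = R$ is extracted from the shape of the quotient $K_{m,n}$ via Serre.
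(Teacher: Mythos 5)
Your proposal is correct and takes essentially the same route as the paper: Tits' theorem (via property (P), Theorem~\ref{thm:prod_has_prop_P}) to get simplicity of $G^{+}$, Serre's theorem together with Lemma~\ref{lem:QuotientIsAGraph} to identify exactly when $G = R := \langle G_v : v \in VT\rangle$, and point-stabiliser generation to force $R \leq G^{+}$, with both directions of the equivalence handled identically. The only cosmetic difference is that you lift local point stabilisers into arc stabilisers through the explicit subgroups $\hat{\gpa}(v)$ of Proposition~\ref{iso_subgroups}, whereas the paper argues directly that $G_{(v,w)}\big|_{B(v)} \leq G_v^{+}\big|_{B(v)}$ and invokes generation of $G_v\big|_{B(v)}$ by its point stabilisers; both arguments then conclude with the same correction trick, writing $g = (gh^{-1})h$ with $gh^{-1}$ fixing $B(v)$ pointwise.
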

\begin{proof} Suppose $M$ is a subgroup of $\sym(\sta)$ and $N$ is a subgroup of $\sym(\stb)$; let $\T$
and $\col$ be
as above.
Write $G:= \Universal{\gpa}{\gpb}{\col}$,
recall $G^+ := \langle G_{(v, w)} : \{v, w\} \in ET \rangle$,
and denote the stabiliser of $v \in VT$ in $G^+$ by $G_v^+$.

We claim that $G_v^+ = G_v$ for all $v \in V\T$. Given $v \in V\T$, it follows from Lemma~\ref{lem:product_locally_MN} that the group $G_v \big|_{B(v)}$ is generated by point stabilisers. Moreover, $G_{(v, w)} \big|_{B(v)} \leq G_v^+ \big|_{B(v)}$ for all $w \in B(v)$.
Hence
$G_v^+ \big|_{B(v)} = G_v \big|_{B(v)}$. In particular, $G_v^+$ and $G_v$ both have the same orbits on $B(v)$. Since $G_{(v, w)} \leq G_v^+$ for all $w \in B(v)$, our claim follows.

Hence $R := \langle G_v : v \in V\T \rangle \leq G^+$. Since $\gpa$ or $\gpb$ is nontrivial, $R$ is nontrivial. Moreover, by Theorem~\ref{thm:serre}, $R$ is a normal subgroup of $G$, and $G=R$ if and only if $G \backslash \T$ is a tree.

If $\gpa$ and $\gpb$ are intransitive, then $G \backslash \T$ is not a tree by Lemma~\ref{lem:QuotientIsAGraph}, and so $R$ is a nontrivial proper normal subgroup of $G$. 

Conversely, suppose $\gpa$ or $\gpb$ is transitive. Then $G \backslash \T$ is a tree, and so $G = R = G^+$. Let $\epsilon$ be an end in $\T$, and suppose $T'$ is some non-empty proper subtree of $\T$.  Now $G_v \big|_{B(v)}$ is transitive for all $v$ in one part of the bipartition of $\T$, and so we may choose such a vertex $v$ from $V\T \setminus VT'$.
Clearly $G_v$ does not fix $\epsilon$, nor does $G_v$ leave $T'$ invariant. Thus, by
Theorems~\ref{theorem:tits} and \ref{thm:prod_has_prop_P},
$G = G^+$ is simple.
\end{proof}

Since point stabilisers are maximal in primitive permutation groups, all non-regular primitive permutation groups are generated by any two distinct point stabilisers.

\begin{corollary} If $\gpa$ and $\gpb$ are non-regular primitive permutation groups, then
$\Universal{\gpa}{\gpb}{}$
is simple. \qed
\end{corollary}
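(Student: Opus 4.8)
The plan is to deduce this corollary directly from Theorem~\ref{theorem:simplicity}, so the main task is simply to verify that non-regular primitive groups satisfy the hypotheses of that theorem. Recall that the theorem requires three conditions: each of $\gpa$ and $\gpb$ has degree at least two, both groups are generated by point stabilisers, at least one group is nontrivial, and (for simplicity) at least one of $\gpa, \gpb$ is transitive. Since a primitive group is by definition transitive, the transitivity hypothesis of Theorem~\ref{theorem:simplicity} is immediate, and a primitive group on a set of size at least two is nontrivial with degree at least two. So the only substantive point to check is that a non-regular primitive permutation group is generated by point stabilisers.

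The key observation, already flagged in the sentence preceding the corollary, is that in a primitive group $G \leq \sym(\VV)$ every point stabiliser $G_\alpha$ is a \emph{maximal} subgroup of $G$. First I would fix a point $\alpha \in \VV$ and consider $G_\alpha$. If $G$ is non-regular, then since $G$ is transitive, non-regularity means the point stabilisers are nontrivial; in particular there is some point $\beta \neq \alpha$ and an element $g \in G_\alpha$ with $g\beta \neq \beta$, so $G_\alpha \not\leq G_\beta$ and the two distinct stabilisers $G_\alpha, G_\beta$ do not coincide. Consider the subgroup $H := \langle G_\alpha, G_\beta \rangle$. By maximality of $G_\alpha$, either $H = G_\alpha$ or $H = G$. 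The first case would force $G_\beta \leq G_\alpha$; but $|G_\alpha| = |G_\beta|$ (they are conjugate, as $G$ is transitive), so $G_\beta \leq G_\alpha$ would give $G_\alpha = G_\beta$, contradicting our choice of $\beta$ with $g \in G_\alpha \setminus G_\beta$. Hence $H = G$, and in particular $G = \langle G_\gamma : \gamma \in \VV \rangle$, i.e. $G$ is generated by point stabilisers.

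Having established that both $\gpa$ and $\gpb$ are generated by point stabilisers (applying the above to each), the hypotheses of Theorem~\ref{theorem:simplicity} are met, and since each is transitive, the conclusion that $\Boxproduct{\gpa}{\gpb}{}$ is simple follows immediately. I do not anticipate any serious obstacle here: the corollary is essentially a packaging of Theorem~\ref{theorem:simplicity} together with the standard fact about primitive groups. The only subtlety worth care is the non-regularity assumption, which is exactly what guarantees nontrivial point stabilisers and hence the existence of two \emph{distinct} point stabilisers to generate with; for a regular group every point stabiliser is trivial and the group is certainly not generated by them unless it is itself trivial, so the hypothesis cannot be dropped. This is why the corollary insists on non-regular primitivity rather than mere primitivity.
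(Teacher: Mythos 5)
Your overall route is exactly the paper's: the corollary is Theorem~\ref{theorem:simplicity} plus the observation (recorded in the sentence preceding the corollary) that a non-regular primitive group is generated by any two distinct point stabilisers, via maximality. However, one step in your verification of that observation is invalid in the generality the paper works in. You rule out the case $G_\beta \leq G_\alpha$ by arguing that $G_\alpha$ and $G_\beta$ are conjugate, hence of equal cardinality, hence nested implies equal. That inference only works for finite groups. The paper's primitive groups are explicitly allowed to be infinite with infinite point stabilisers (this is precisely the situation in Remark~\ref{remark:construction}, e.g.\ closed subdegree-finite non-regular primitive groups such as automorphism groups of suitable locally finite graphs), and for infinite groups a subgroup can be conjugate to a proper subgroup of itself: in $\langle a, t \mid tat^{-1} = a^2 \rangle$ one has $t \langle a \rangle t^{-1} = \langle a^2 \rangle \lneq \langle a \rangle$, and inside $\sym(\Z)$ the pointwise stabiliser of $\{0,1,2,\ldots\}$ is carried by the shift to the strictly larger pointwise stabiliser of $\{1,2,3,\ldots\}$. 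So ``equal cardinality and nested'' does not yield equality, and as written your argument does not exclude $H = G_\alpha$.

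The repair is one line, and it explains why maximality alone suffices for the paper's claim: apply maximality to $G_\beta$ instead of $G_\alpha$. You have already produced $g \in G_\alpha \setminus G_\beta$, so $\langle G_\alpha, G_\beta \rangle$ contains $G_\beta$ properly; since $G_\beta$ is maximal by primitivity, $\langle G_\alpha, G_\beta \rangle = G$. (Equivalently: if $G_\beta \leq G_\alpha$, then maximality of $G_\beta$ together with $G_\alpha \neq G$ --- which holds as $G$ is transitive of degree at least two --- forces $G_\alpha = G_\beta$, contradicting your choice of $g$.) With that substitution your proof is complete and coincides with the paper's intended argument; the rest of your verification of the hypotheses of Theorem~\ref{theorem:simplicity} (transitivity, nontriviality, degree at least two) is correct.
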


\begin{remark} For a nontrivial permutation group $\gpa\leq \sym(\sta)$, the Burger--Mozes group $U(\gpa)$ is permutation isomorphic to $\Boxproduct{S_2}{\gpa}{}$ (as noted in Remark~\ref{rem:topo_iso}), and so both groups can be thought of as groups of automorphisms of the $|\sta|$-regular tree $T$. Recall that for $H \leq \aut T$ the group $H^+$ is generated by all the pointwise stabilisers of edges in $T$, and it is easy to see that $H^+ \unlhd H$. Of course $(\Boxproduct{S_2}{\gpa}{})^+$ and $U(\gpa)^+$ are also isomorphic as permutation groups.

If $\rm{Id}$ denotes the trivial subgroup of $S_2$, and $\gpa$ is transitive and generated by point stabilisers, then Theorem~\ref{theorem:simplicity} implies that
$\Boxproduct{\rm{Id}}{\gpa}{}$ is simple, and hence $(\Boxproduct{\rm{Id}}{\gpa}{})^+ = \Boxproduct{\rm{Id}}{\gpa}{}$. For each edge $\{v,w\} \in ET$ we have $(\Boxproduct{S_2}{\gpa}{})_{(v,w)} \leq \Boxproduct{\rm{Id}}{\gpa}{}$, and so it follows that $U(\gpa)^+ \cong (\Boxproduct{S_2}{\gpa}{})^+ = \Boxproduct{\rm{Id}}{\gpa}{}$. Hence $U(\gpa)^+$ is simple when $\gpa$ is nontrivial, transitive and generated by point stabilisers. This fact was already known for finite $\gpa$ (see \cite[Proposition 3.2.1]{BurgerMozes}).
\end{remark}

%
%
\section{Permutational properties}
\label{section:permutational}

Recall the famous primitivity conditions for the unrestricted wreath product $\gpa \Wr_{\stb} \gpb$ in its product action on the set $\sta^{\stb}$ of functions from $\stb$ to $\sta$ (see \cite[Lemma 2.7A]{dixon&mortimer} for example).
\begin{enumerate}
\item
	$\gpa \Wr_{\stb} \gpb$ is transitive if and only if $\gpa$ is transitive; and
\item
	$\gpa \Wr_{\stb} \gpb$ is primitive if and only if $\gpa$ is primitive but not regular, and $\gpb$ is transitive and finite.
\end{enumerate}

Because of
these
criteria, the wreath product can be used to easily build new primitive groups from other primitive groups. It features prominently in the seminal O'Nan--Scott Theorem, which classifies the finite primitive permutation groups (see \cite{liebeck&praeger&saxl:finite_onan_scott}, for example).

Compare the primitivity criteria for the wreath product above with the following
remarkable
result for the box product.

\begin{theorem} \label{thm:PermutationalProperties} Given 
permutation groups $\gpa \leq \sym (\sta)$ and $\gpb \leq \sym (\stb)$ of nontrivial degree,
the permutation group $\Boxproduct{\gpa}{\gpb}{} \leq \sym(\Vb)$ satisfies:
\begin{enumerate}
\item \label{item:product_trans_iff_M_is_trans}
	$\Boxproduct{\gpa}{\gpb}{}$ is transitive if and only if $\gpa$ is transitive; and
\item
	$\Boxproduct{\gpa}{\gpb}{}$ is primitive if and only if $\gpa$ is primitive but not regular, and $\gpb$ is transitive.
\end{enumerate}
\end{theorem}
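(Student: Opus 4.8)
The plan is to work throughout with $\mathbf G:=\Universal{\gpa}{\gpb}{\col}$ and to use that $\Boxproduct{\gpa}{\gpb}{}$ is exactly the (faithful) action of $\mathbf G$ on $\Vb$; faithfulness holds because in a tree two vertices have at most one common neighbour and $|\sta|\ge 2$, so an automorphism fixing $\Vb$ pointwise fixes each $\Va$-vertex as well. Consequently blocks of $\Boxproduct{\gpa}{\gpb}{}$ are precisely the $\mathbf G$-invariant partitions of $\Vb$. For part~(\ref{item:product_trans_iff_M_is_trans}) I would read off the answer from Proposition~\ref{prop:products_orbits}: for $v\in\Vb$ the constant $\col\big|_{\overline{\ArcsFrom}(v)}$ lies in $\sta$, and as $v$ ranges over $\Vb$ it realises \emph{every} element of $\sta$ (given $\alpha\in\sta$ and any $u\in\Va$, the neighbour $v$ with $\col(u,v)=\alpha$ has $\col\big|_{\overline{\ArcsFrom}(v)}=\alpha$); hence $\mathbf G$ is transitive on $\Vb$ iff $\sta$ is a single $\gpa$-orbit, i.e. iff $\gpa$ is transitive. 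I would also record one sublemma for part~(ii): \emph{if $\Phi\ni v$ is a block and $u\in\Va$, then $\Phi\cap B(u)$ is a block of $\mathbf G_u\big|_{B(u)}\cong\gpa$ on $B(u)\cong\sta$} (Lemma~\ref{lem:product_locally_MN}), because the blocks $\{g\Phi:g\in\mathbf G\}$ partition $\Vb$ and intersecting with the $\mathbf G_u$-invariant set $B(u)$ gives a $\mathbf G_u$-invariant partition of which $\Phi\cap B(u)$ is a part.

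For \emph{sufficiency} assume $\gpa$ is primitive and not regular and $\gpb$ is transitive; then $\gpa$ is transitive so $\Boxproduct{\gpa}{\gpb}{}$ is transitive by (i), and I must show any block $\Phi\ni v$ with $|\Phi|\ge 2$ is all of $\Vb$. \textbf{The main obstacle is to force two points of $\Phi$ into a common lobe}, i.e. at distance $2$. I would argue by contradiction: suppose distinct points of $\Phi$ are always at distance $\ge 4$, and take $v,v'\in\Phi$ at the minimal (even) distance $2k_0\ge 4$. Let $u\in\Va$ be the neighbour of $v$ on $[v,v']_T$ and let $y\in B(u)$ be the next vertex on this path, so $y\ne v$ and $v'$ lies beyond $y$. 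Identifying $B(u)\cong\sta$, the point stabiliser $\gpa_y$ is nontrivial (non-regularity) and, since in a primitive non-regular group distinct points have distinct and hence incomparable stabilisers, $\gpa_y$ does not fix $v$; so some $\sigma\in\gpa_y$ sends $v$ to a vertex $v_2\ne v$ of $B(u)$. Applying Tits' independence (Lemma~\ref{lem:prod_has_independence}) to the arc $(u,y)$ I would realise $\sigma$ by an element $g\in\mathbf G$ that fixes the half-tree containing $y$ and $v'$ pointwise and acts as $\sigma$ at $u$. Then $gv'=v'\in\Phi\cap g\Phi$, while $gv=v_2$ satisfies $d(v,v_2)=2<2k_0$, so $v_2\notin\Phi$ although $v_2\in g\Phi$; thus the blocks $\Phi$ and $g\Phi$ overlap without coinciding, a contradiction. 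Hence some lobe $B(u)$ meets $\Phi$ in $\ge 2$ points, and by the sublemma together with primitivity of $\gpa$ we get $B(u)\subseteq\Phi$.

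To complete sufficiency I would \emph{propagate} across lobes using transitivity of $\gpb$: if $B(u)\subseteq\Phi$ and $w\in B(u)$, then for any $u'\in B(w)$ transitivity of $\mathbf G_w\big|_{B(w)}\cong\gpb$ gives $g\in\mathbf G_w\le\mathbf G_{\{\Phi\}}$ with $gu=u'$, so $B(u')=gB(u)\subseteq g\Phi=\Phi$; iterating over the connected tree reaches every $\Va$-vertex and forces $\Phi=\bigcup_u B(u)=\Vb$. (Note the independence step, not $\gpb$-transitivity, is what drives the crux; $\gpb$-transitivity enters only in this propagation.)

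For \emph{necessity} I would assume $\Boxproduct{\gpa}{\gpb}{}$ is primitive, deduce $\gpa$ transitive from (i), and then exhibit a proper nontrivial $\mathbf G$-invariant partition of $\Vb$ whenever one of the remaining hypotheses fails. If $\gpb$ is intransitive, fix a $\gpb$-orbit $\Theta\subsetneq\stb$; the $\Va$-vertices $u$ with $\col\big|_{\overline{\ArcsFrom}(u)}\in\Theta$ form a single $\mathbf G$-orbit (Proposition~\ref{prop:products_orbits}), and deleting all other $\Va$-vertices from $T$ leaves a $\mathbf G$-invariant forest whose components meet $\Vb$ in a partition that is nontrivial (each retained lobe keeps its $\ge 2$ neighbours together) and proper (removing a non-$\Theta$ lobe separates its $\ge 2$ neighbours). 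If $\gpa$ is imprimitive, fix a $\gpa$-block $\Sigma$ with $2\le|\Sigma|<|\sta|$ and join $v,v'$ when they share a lobe and lie in the same $\Sigma$-block there; this relation is $\mathbf G$-invariant, and since the position of $v$ in every incident lobe equals its intrinsic in-colour $\col\big|_{\overline{\ArcsFrom}(v)}$, each join preserves the $\Sigma$-class of that colour, so the connected components lie inside the proper nonempty colour-classes and give a proper nontrivial congruence. Finally, if $\gpa$ is primitive and regular then $\gpa\cong\Z_p$, each lobe $B(u)$ carries a $\mathbf G_u$-invariant $\Z_p$-valued difference on pairs, and summing these differences along tree-paths defines $f\colon\Vb\to\Z_p$ on which $\mathbf G$ acts by translation, so the level sets $f^{-1}(c)$ form a proper nontrivial $\mathbf G$-invariant partition. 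In each case the box product is imprimitive, which contradicts the assumption and forces $\gpb$ transitive and $\gpa$ primitive and not regular, completing the equivalence.
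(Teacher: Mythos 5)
Your proposal is correct, and while your sufficiency argument is essentially the paper's, your necessity argument takes a genuinely different and more self-contained route. For sufficiency you use the same two engines as the paper: the independence property (Lemma~\ref{lem:prod_has_independence}) applied at a vertex on the path between two related points, together with the fact that in a primitive non-regular group a point stabiliser fixes no second point (via Lemma~\ref{lem:product_locally_MN}), and then $\gpb$-transitivity only to propagate; the paper phrases this with an invariant equivalence relation and produces a related pair at distance $2$ directly, whereas you phrase it with blocks and a minimal-distance contradiction --- a cosmetic difference. For necessity, however, the paper works throughout with disconnected orbital graphs: it invokes Higman's theorem \cite{higman} when $\gpa$ is imprimitive, and when $\gpa$ is regular it splits into the cases $|\sta|=2$ (tree bipartition) and $|\sta|\geq 3$, the latter resolved by citing \cite[Theorem 2.5]{me:prim_directed_graphs}. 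You instead exhibit explicit congruences in all three cases: components of the forest obtained by deleting the $\Va$-vertices whose in-colour lies outside a $\gpb$-orbit; the colour-class congruence built from a $\gpa$-block system, which works because condition~(\ref{def:thirdcondition}) of a legal colouring makes the position of $w \in \Vb$ in every incident lobe equal to the single in-colour $\col\big|_{\overline{\ArcsFrom}(w)}$; and, for $\gpa$ primitive regular (hence $\cong \Z_p$), the level sets of a $\Z_p$-valued potential obtained by summing colour differences along tree-paths, on which $\Universal{\gpa}{\gpb}{\col}$ acts by translations. Your route buys self-containedness (no Higman, no external citation, no case split on $|\sta|$; indeed the potential argument works for any regular $\gpa$, not just $\Z_p$), at the cost of two routine verifications you should make explicit: the concatenation identity $f(gw)=f(w)+c_g$ (a median-in-tree computation using antisymmetry of the differences) and the standard fact that a regular primitive group is cyclic of prime order. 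The paper's route is shorter by leaning on known results and keeps the orbital-graph viewpoint that recurs elsewhere in the paper (e.g.\ the discussion of tree-like orbital graphs following the theorem).
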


Despite the striking similarity between these two sets of conditions, $\gpa \Wr_{\stb} \gpb$ and $\Boxproduct{\gpa}{\gpb}{}$ distort the actions of $\gpa$ and $\gpb$ in opposite ways. This is most apparent when $\gpa$ is subdegree-finite and primitive but not regular and $\gpb$ is 
finite
and transitive. All nontrivial orbital graphs of $\Boxproduct{\gpa}{\gpb}{}$ are tree-like: they are locally finite, connected and have infinitely many ends; on the other hand nontrivial orbital graphs of $\gpa \Wr_{\stb} \gpb \leq \sym(\sta^{\stb})$ are locally finite and connected but with at most
one end
(see \cite[Theorem 2.4]{me:jls}). Figure \ref{fig:orbital_graph_vs} shows (left) an orbital graph of $\Boxproduct{S_3}{S_2}{}$ (which is necessarily infinite) and (right) an orbital graph of $S_3 \Wr S_2$ (which is necessarily finite).

\begin{figure}[h]
\centering
\begin{minipage}{.5\textwidth}
  \centering
  \includegraphics[width=.7\linewidth]{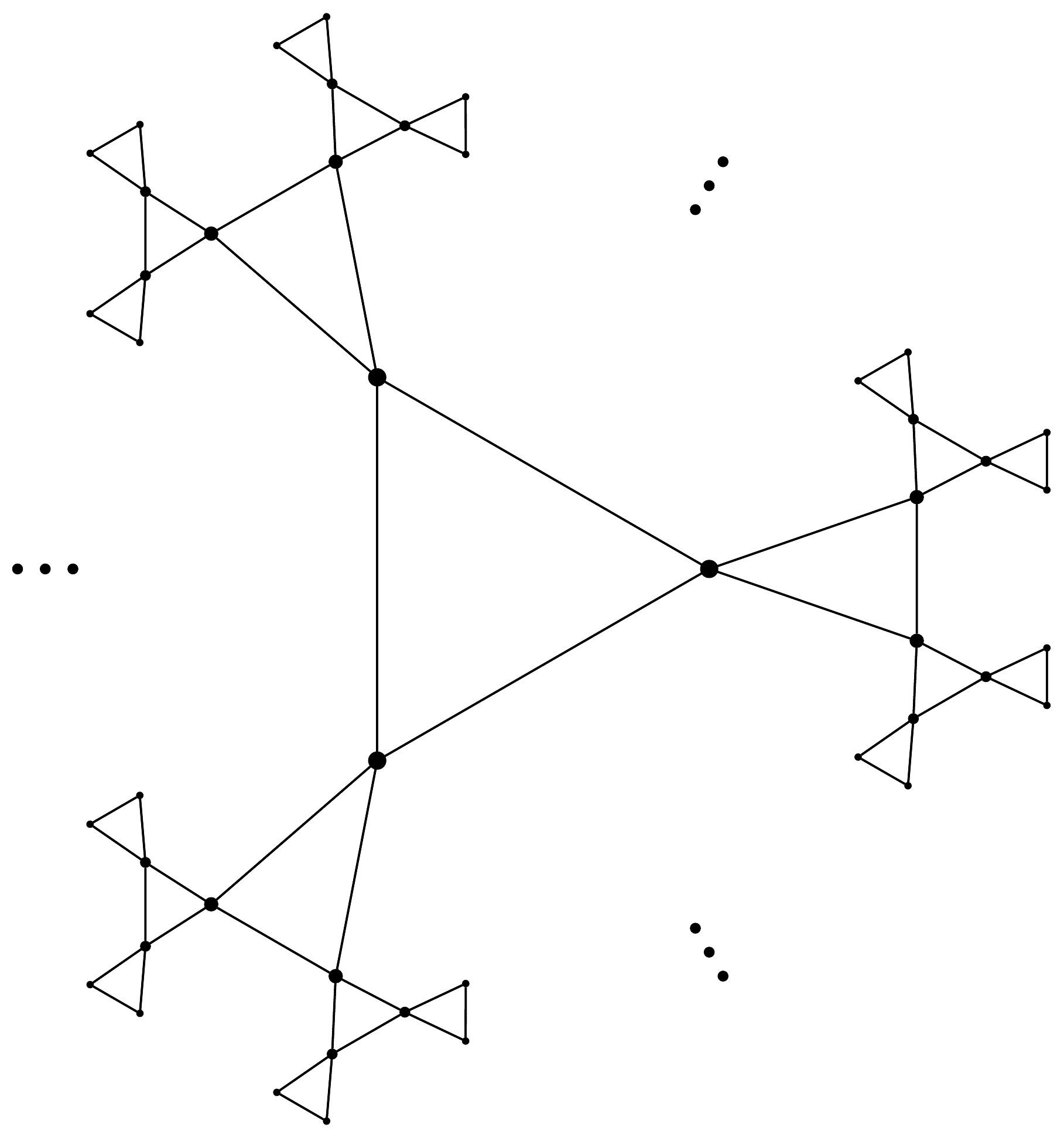}
\end{minipage}%
\begin{minipage}{.5\textwidth}
  \centering
  \includegraphics[width=.4\linewidth]{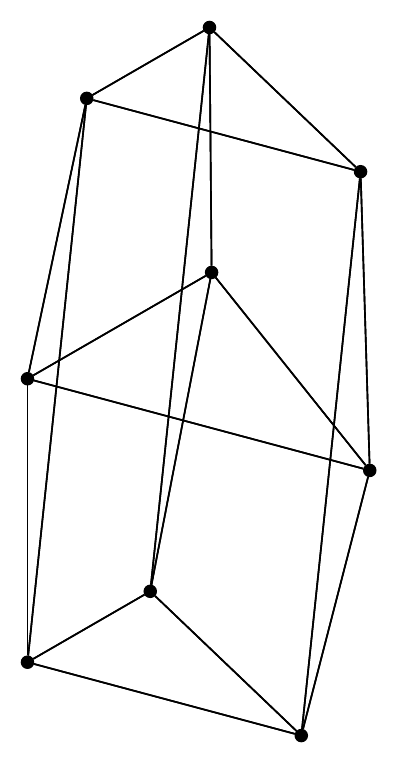}
\end{minipage}
\caption{An orbital graph of $\Boxproduct{S_3}{S_2}{}$ (left) and of $S_3 \Wr S_2$ (right)}
\label{fig:orbital_graph_vs}
\end{figure}

\begin{proof} [Proof of Theorem~\ref{thm:PermutationalProperties}] Part (\ref{item:product_trans_iff_M_is_trans}) is Theorem~\ref{theorem:product_as_aut_T}~(\ref{item:orbits_in_theorem:product_as_aut_T}).
Write $G:=\Universal{\gpa}{\gpb}{\col}$ for some
legal colouring with $\sta$ and $\stb$,
let $\T$ be the $(|\sta|,|\stb|)$-biregular tree.

We show first that if $G$ is primitive on $\Vb$, then $\gpa$ is primitive but not regular and $\gpb$ is transitive. Suppose that $\gpb$ is not transitive on $\stb$.
Fix $v \in \Va$ and distinct vertices $w, w' \in B(v) \subseteq \Vb$. Choose $v' \in B(w)$ such that $\col (w, v)$ and $\col (w, v')$ lie in distinct orbits of $\gpb$. By Proposition~\ref{prop:products_orbits}, the vertex $v'$ does not lie in the orbit $Gv$. Let $\Gamma$ be the orbital graph whose vertex set is $\Vb$ and edge set is the orbit $G\{w, w'\}$. Notice that if two vertices in $\Gamma$ are adjacent, then their distance in $\T$ is two. Therefore, if $x \in \Vb$ lies in the connected component of $\T \setminus \{w\}$ that contains $v'$, then any
graph path
in $\Gamma$ from $x$ to $w$ must contain an edge (in $\Gamma$) between $w$ and some vertex $w'' \in B(v')$. But this implies that $\{w, w''\}$ lies in the orbit $G\{w, w' \}$, which requires that $v' \in Gv$, and we already have that $v' \not \in Gv$. Hence $G$ has a nontrivial orbital graph on $\Vb$ that is not connected. The connected components of this graph give rise to a $G$-invariant equivalence relation on $\Vb$, and so the action of $G$ on $\Vb$ is not primitive. 

Suppose $\gpa$ is not primitive on $\sta$. If $\gpa$ is not transitive then $G$ is not transitive (and therefore not primitive) on $\Vb$ by Proposition~\ref{prop:products_orbits}. Suppose then that $\gpa$ is imprimitive. By D.~G.~Higman's Theorem (\cite[1.12]{higman}), there exists an orbital graph of $\gpa$ that is nontrivial and not connected.
Choose a vertex $v \in \Va$. By Lemma~\ref{lem:product_locally_MN}, 
some nontrivial orbital graph $\Delta$ of $G_v \big|_{B(v)}$ on $B(v)$ is not connected; let $\{w, w'\}$ be an edge in $\Delta$. Now take $\Gamma$ to be the orbital graph whose vertex set is $\Vb$ and whose edge set is the orbit $G\{w, w'\}$. We claim that $\Gamma$ is not connected. Indeed, any pair of adjacent vertices in $\Gamma$ are at distance $2$ in the tree $\T$. Therefore any
graph path in $\Gamma$
between distinct vertices in $B(v)$ contains only vertices in $B(v) = V\Delta$. It follows that $\Gamma$ is not connected, and thus that $G$ is not primitive on $\Vb$.

Suppose that $\gpa$ is regular. Let  $\Gamma$ be a new graph on $\Vb$, in which vertices are adjacent if and only if their distance in $\T$ is two. Thus $\Gamma$ is has connectivity one. If $|\sta| = 2$, then $\Gamma$ is a regular tree and the bipartition of $\Gamma$ is a system of imprimitivity for $G$ on $\Vb$ so the action of $G$ on $\Vb$ is not primitive. If $|\sta| \geq 3$, then the lobes of $\Gamma$ contain at least three vertices, $G \big|_{\Vb} \leq \aut \Gamma$ acts vertex transitively, and the block-cut-vertex tree of $\Gamma$ is $\T$. Choose any vertex $v \in \Va$, and distinct vertices $w, w' \in B(v) \subseteq \Vb$. Because $\gpa$ is regular, $G_{w, v} = G_{w', v}$.
By Theorem~\ref{theorem:MyImprimitiveTreeAction}, the action of $G$ on $V\Gamma = \Vb$ is not primitive.

We have shown that if $G$ is primitive on $\Vb$, then $\gpa$ is primitive but not regular and $\gpb$ is transitive; let us turn our attention to the converse. Suppose that $\gpa$ is primitive and not regular, and $\gpb$ is transitive. Let $\sim$ be a nontrivial $G$-invariant equivalence relation on $\Vb$. Choose distinct $w, w' \in \Vb$ with $w \sim w'$. Let $v$ denote the vertex adjacent to $w'$ in the 
geodesic
$[w, w']_{\T}$, and let $w''$ denote the vertex adjacent to $v$ in $[w, w')_{\T}$. Since $G_v \big|_{B(v)}$ is permutation isomorphic to $\gpa$, it is primitive and not regular on $B(v)$, and so there is an element $h \in G_{v, w''}$ that does not fix $w'$.
By Lemma~\ref{lem:prod_has_independence}, there exists $g \in G$ such that $g$ fixes the half-tree $\T_{(w'', v)}$ pointwise (so in particular $g$ fixes $w$) and 
$gw' = hw'$.
Hence $w' \sim w = gw \sim gw'$, and $d_{\T}(w', gw') = 2$. However, $G_v \big|_{B(v)}$ is primitive, so $\sim$ must be a universal relation on $B(v)$. Since $\gpb$ is transitive, 
it follows that $\sim$ is universal on $\Vb$. Hence there are no proper nontrivial $G$-invariant equivalence relations on $\Vb$, so the action of $G$ on $\Vb$ is primitive.
\end{proof}

Recall that a permutation group is {\em subdegree-finite} if any point stabiliser has only orbits of finite length.

\begin{proposition} \label{prop:finite_subdegrees} Suppose $\gpa$ and $\gpb$ are 
permutation groups of nontrivial degree.
Then $\Boxproduct{\gpa}{\gpb}{}$ is subdegree-finite if and only if $\gpa$ is subdegree-finite and all orbits of $\gpb$ are finite.
\end{proposition}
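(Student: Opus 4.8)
The plan is to work throughout with $G := \Universal{\gpa}{\gpb}{\col}$ acting on $\T$ and to study the orbits of a point stabiliser $G_w$, $w \in \Vb$, on $\Vb$, since $\Boxproduct{\gpa}{\gpb}{} \cong G \big|_{\Vb}$ and these $G_w$-orbits are exactly the suborbits of $\Boxproduct{\gpa}{\gpb}{}$. Because $G \leq \aut \T$ preserves graph distance, every such orbit lies inside a single sphere $\{u \in \Vb : d_{\T}(w,u) = 2k\}$, so $\Boxproduct{\gpa}{\gpb}{}$ is subdegree-finite if and only if $G_w u$ is finite for every $w \in \Vb$ and every $u \in \Vb$. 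The one tool I would use repeatedly is that $G$ is locally-$(\gpa,\gpb)$ (Lemma~\ref{lem:product_locally_MN}): for $v \in \Va$ the group $G_v\big|_{B(v)}$ is permutation isomorphic to $\gpa$ and for $w \in \Vb$ the group $G_w\big|_{B(w)}$ is permutation isomorphic to $\gpb$, with the isomorphisms respecting $\col$, so that the stabiliser in $G_v\big|_{B(v)}$ of a neighbour of colour $x$ corresponds to $\gpa_x$.

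For necessity I would argue the contrapositive using only vertices at distance two from $w$. Fix $w \in \Vb$ and a neighbour $v \in B(w) \subseteq \Va$. If some orbit of $\gpb$ on $\stb$ is infinite, then the $G_w$-orbit of $v$ inside $B(w)$ is infinite; choosing any $u \in B(v)\setminus\{w\}$ (possible since $|\sta|\geq 2$), the nearest-neighbour-to-$w$ map sends $G_w u$ onto $G_w v$, so $|G_w u| \geq |G_w v| = \infty$ and $\Boxproduct{\gpa}{\gpb}{}$ is not subdegree-finite. If instead $\gpa$ is not subdegree-finite, pick $x \in \sta$ with $\gpa_x$ having an infinite orbit, and take $w \in B(v)$ to be the neighbour of colour $x$; then $G_{w,v}\big|_{B(v)}$ is permutation isomorphic to $\gpa_x$, hence has an infinite orbit on $B(v)\setminus\{w\} \subseteq \Vb$, again producing an infinite suborbit. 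Since $\col(v,\cdot)$ is a bijection $B(v)\to\sta$, letting $w$ vary over $B(v)$ realises every $x \in \sta$, so this forces subdegree-finiteness of all of $\gpa$.

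For sufficiency I would assume $\gpa$ is subdegree-finite and every $\gpb$-orbit is finite, fix $w \in \Vb$ and $u \in \Vb$ with geodesic $w = u_0, u_1, \ldots, u_{2k} = u$, and telescope the orbit of $u$ along this path. The equivariant first-step map $gu \mapsto gu_1$ (send $gu$ to the neighbour of $w$ on $[w,gu]_{\T}$) carries $G_w u$ onto $G_w u_1$ with fibres of constant size $|G_{w,u_1}u|$, giving $|G_w u| = |G_w u_1|\cdot|G_{w,u_1}u|$; iterating yields
\[
|G_w u| = \prod_{i=1}^{2k} \big| G_{w,u_1,\ldots,u_{i-1}} \cdot u_i \big|.
\]
Each factor is the orbit of $u_i \in B(u_{i-1})$ under $G_{w,u_1,\ldots,u_{i-1}}$, whose restriction to $B(u_{i-1})$ lies in the stabiliser of $u_{i-2}$ inside $G_{u_{i-1}}\big|_{B(u_{i-1})}$; it is therefore contained in a suborbit of $\gpa$ when $u_{i-1}\in\Va$ and in a suborbit of $\gpb$ when $u_{i-1}\in\Vb$ (the $i=1$ factor being a full $\gpb$-orbit). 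All of these are finite under the hypotheses, so the product of $2k$ finite numbers is finite and $G_w u$ is finite, as required.

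The one genuinely delicate point — the step I expect to be the main obstacle — is that the factors arising at $\Vb$-vertices $u_{i-1}$ are \emph{suborbits} of $\gpb$, whereas the hypothesis controls only the \emph{orbits} of $\gpb$. This is resolved by the elementary observation that a suborbit $\gpb_y z$ is contained in the orbit $\gpb z$, so finiteness of all $\gpb$-orbits already forces finiteness of all $\gpb$-suborbits; no analogous shortcut exists on the $\gpa$ side, which is precisely why subdegree-finiteness of $\gpa$ (and not merely finiteness of its orbits) is the correct hypothesis there. I would also note that, unlike the simplicity and primitivity results, this argument uses only the local structure of $G$ and not Tits' independence property.
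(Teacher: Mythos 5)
Your proof is correct and follows essentially the same route as the paper: both arguments telescope the suborbit size along the geodesic in $\T$, bound each factor by a local orbit or suborbit of $\gpb$ or $\gpa$ via Lemma~\ref{lem:product_locally_MN}, and prove the converse by exhibiting an infinite suborbit at distance two from a local failure of the hypotheses. The only cosmetic differences are that the paper uses two-point stabilisers $G_{w_{i-1},w_i}$ with an inequality where you use full path stabilisers with equality, and the paper silently invokes your explicit observation that finiteness of all $\gpb$-orbits already forces finiteness of all $\gpb$-suborbits.
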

\begin{proof} Let $G:=\Universal{\gpa}{\gpb}{\col}$. Suppose $\gpa$ is subdegree-finite and all orbits of $\gpb$ are finite. Choose distinct $w, w' \in \Vb$ and let $w_0 w_1 \cdots w_n$ denote the
geodesic
$[w, w']_T$. Since $G_{w, w'} = G_{w_0, \ldots, w_n}$ we have 
\[|G_w w'| = |G_{w_0} : G_{w_0, \ldots, w_n}| \leq |G_{w_0} w_1| \cdot \prod_{i=1}^{n-1} |G_{w_{i-1}, w_i} w_{i+1}|. \]
Now $|G_{w_0} w_1|$ is finite because $\gpb$ has only finite obits, and each $|G_{w_{i-1}, w_i} w_{i+1}|$ is finite because both $\gpa$ and $\gpb$ are subdegree-finite. Hence the action of $G$ on $\Vb$ is subdegree-finite.

On the other hand, if $\gpa$ is not subdegree-finite, then for all $v \in \Va$ there exist $w, w' \in B(v) \subseteq \Vb$ such that $G_{w, v}w'$ is infinite. Hence $G_w w'$ is not finite. If $\gpb$ has an infinite orbit, then for all $w \in \Vb$ there exists $v \in B(w) \subseteq \Va$ such that $G_w v$ is infinite. Hence for any $w' \in B(v) \setminus \{w\}$, the orbit $G_w w'$ is infinite.
\end{proof}

%
%
\section{Topological properties of the product}

If we bestow $\sym(\sta)$ and $\sym(\stb)$ with their respective permutation topologies, then 
the box product of $\gpa$ and $\gpb$ (under its usual permutation topology)
 preserves some topological properties of $\gpa$ and $\gpb$ but does not preserve discreteness. In this section, all topological statements are with respect to the permutation topology, which is described in Section~\ref{subsection:perm_topology}. 

\begin{lemma} \label{lem:closure} \label{lem:tree_locally_compact}   Let $T$ be a tree with no vertex of valence one,
and consider $\aut T$ as a topological group under its permutation topology.
Suppose $G \leq \aut T$ is closed and fixes setwise the two parts $\VV_1, \VV_2$ of the bipartition of $T$. Then $G \big|_{\VV_i}$ is closed in $\sym(\VV_i)$ for $i = 1, 2$, and if $G_v \big|_{B(v)}$ is closed for all $v \in VT$ then the following are equivalent:
\begin{enumerate}
\item \label{item1:lem:tree_locally_compact}
	for all $v \in \VV_1$ and $w \in \VV_2$, all point stabilisers in $G_v \big|_{B(v)}$ are compact and $G_w \big|_{B(w)}$ is compact; 
\item \label{item2:lem:tree_locally_compact}
	all point stabilisers in $G\big|_{\VV_2}$ are compact.
\end{enumerate}
\end{lemma}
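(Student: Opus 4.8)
The plan is to exploit the rigidity of trees to move information back and forth between the action of $G$ on all of $VT$ and its action on a single part of the bipartition, and then to read off compactness through the criterion recalled in Section~\ref{subsection:perm_topology}: a closed permutation group is compact exactly when all of its orbits are finite.

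First I would record two consequences of the hypothesis that $T$ has no vertex of valence one. Every vertex then has at least two neighbours, and in a tree two distinct vertices have at most one common neighbour; hence if $g\in G$ fixes two distinct neighbours $w_1,w_2$ of a vertex $v$, then $gv$ is a common neighbour of $w_1,w_2$ and so $gv=v$. This immediately gives that $G$ acts faithfully on $\VV_2$ (an element fixing $\VV_2$ pointwise fixes every neighbour of every $v\in\VV_1$, hence fixes $v$, hence is trivial), and symmetrically on $\VV_1$. The same observation shows the restriction map $\rho\colon G\to\sym(\VV_2)$, $g\mapsto g\big|_{\VV_2}$, is a homeomorphism onto $G\big|_{\VV_2}$: $\rho$ is continuous since $\rho^{-1}$ of a basic neighbourhood is a $G_{(F)}$ with $F$ finite, and conversely, given finite $\Phi\subseteq VT$, the finite set $F:=(\Phi\cap\VV_2)\cup\bigcup_{v\in\Phi\cap\VV_1}\{w_v,w_v'\}$ (two distinct neighbours per $v$) has the property that $g\big|_{\VV_2}$ fixing $F$ forces $g\in G_{(\Phi)}$.

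For the closedness assertion I would argue directly. Let $h$ lie in the closure of $G\big|_{\VV_2}$ in $\sym(\VV_2)$, and for each finite $F\subseteq\VV_2$ pick $g_F\in G$ agreeing with $h$ on $F$. Define $g$ on $VT$ by $g\big|_{\VV_2}:=h$ and, for $v\in\VV_1$, let $g(v)$ be the unique common neighbour of $h(w_1),h(w_2)$ for two neighbours $w_1,w_2$ of $v$; this equals $g_F(v)$ whenever $\{w_1,w_2\}\subseteq F$, so it is well defined and independent of the chosen neighbours. A routine check shows $g\in\aut T$ and $g_F\to g$ in $\sym(VT)$, whence $g\in G$ because $G$ is closed, and $h=g\big|_{\VV_2}\in G\big|_{\VV_2}$; the argument for $\VV_1$ is symmetric. (Alternatively, $\rho$ is a uniform isomorphism onto its image and $G$ is complete, being closed in $\sym(VT)$.)

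For the equivalence I use that a point stabiliser in $G\big|_{\VV_2}$ is $(G\big|_{\VV_2})_w=G_w\big|_{\VV_2}$, which is closed by the first assertion and so is compact iff all $G_w$-orbits on $\VV_2$ are finite; the local-closedness hypothesis lets me pass freely between local compactness and finiteness of local orbits. For (\ref{item1:lem:tree_locally_compact})$\Rightarrow$(\ref{item2:lem:tree_locally_compact}), fix $w\in\VV_2$ and $u\in\VV_2$ with geodesic $w=u_0,u_1,\dots,u_{2k}=u$. Since an automorphism fixing $w$ and $u$ fixes the whole path, $G_{w,u}=G_{u_0,\dots,u_{2k}}$, and telescoping gives $|G_w u|=\prod_{i=1}^{2k}|G_{u_0,\dots,u_{i-1}}u_i|$, each factor an orbit on $B(u_{i-1})$; for $i\geq 2$ the image of $G_{u_0,\dots,u_{i-1}}$ in the local group at $u_{i-1}$ fixes $u_{i-2}$, so that factor is bounded by a point-stabiliser orbit of the local group at $u_{i-1}$, while the first factor is an orbit of $G_w\big|_{B(w)}$. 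By (\ref{item1:lem:tree_locally_compact}) each factor is finite — the $\VV_1$-vertices supply subdegree-finiteness, the $\VV_2$-vertices compactness — so $|G_w u|<\infty$ and $(G\big|_{\VV_2})_w$ is compact. For (\ref{item2:lem:tree_locally_compact})$\Rightarrow$(\ref{item1:lem:tree_locally_compact}), given that every $G_w$ ($w\in\VV_2$) has finite orbits on $\VV_2$: for the $\VV_2$-condition, $v\in B(w)$ has a neighbour $u\neq w$ in $\VV_2$ and $gu\mapsto gv$ is a $G_w$-equivariant surjection $G_w u\to G_w v$, so $|G_w v|\leq|G_w u|<\infty$ and $G_w\big|_{B(w)}$, being closed, is compact; for the $\VV_1$-condition, a point stabiliser $(G_v\big|_{B(v)})_u=G_{v,u}\big|_{B(v)}$ with $u\in B(v)\cap\VV_2$ satisfies $G_{v,u}u'\subseteq G_u u'$ for $u'\in B(v)$, a finite $G_u$-orbit on $\VV_2$, so it too has finite orbits and is compact. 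I expect the telescoping estimate in (\ref{item1:lem:tree_locally_compact})$\Rightarrow$(\ref{item2:lem:tree_locally_compact}) to be the main obstacle: one must track the parity of the path to decide whether each intermediate local group is of the compact ($\VV_2$) or subdegree-finite ($\VV_1$) type, and must correctly identify each partial-stabiliser orbit with a point-stabiliser orbit in the local group; the faithfulness, homeomorphism, and limit constructions are the conceptual crux but are computationally routine.
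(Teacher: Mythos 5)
Your proof is correct. For the equivalence of (\ref{item1:lem:tree_locally_compact}) and (\ref{item2:lem:tree_locally_compact}) you follow essentially the paper's own route: the paper translates both conditions into finiteness of local orbits and suborbits, and then invokes an argument ``similar to the proof of Proposition~\ref{prop:finite_subdegrees}'', which is precisely your telescoping identity $|G_w u| = \prod_{i}|G_{u_0,\dots,u_{i-1}}u_i|$ along the geodesic (with the parity bookkeeping deciding whether each factor is controlled by compactness at a $\VV_2$-vertex or subdegree-finiteness at a $\VV_1$-vertex), together with your converse orbit comparisons; your write-up just makes explicit what the paper leaves as a pointer. Where you genuinely diverge is the closedness of $G\big|_{\VV_i}$. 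The paper sets $A := (\aut T)_{\{\VV_1\}}$, identifies $A\big|_{\VV_1}$ as the full automorphism group of the auxiliary graph $\Gamma$ on $\VV_1$ in which two vertices are adjacent exactly when their distance in $T$ is two (hence $A\big|_{\VV_1}$ is closed in $\sym(\VV_1)$), and then shows the restriction isomorphism $\varphi$ is an open map, so that $\varphi(G) = \varphi(A)\setminus\varphi(A\setminus G)$ is closed in the closed group $\varphi(A)$. You instead take a limit point $h$ of $G\big|_{\VV_2}$, extend it vertex-by-vertex to a map $g$ on $VT$ via unique common neighbours, check $g \in \aut T$ and $g_F \to g$, and conclude from closedness of $G$ in $\aut T$ that $h = g\big|_{\VV_2}$. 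Your construction is more elementary and self-contained: in particular it does not need the reconstruction claim that every automorphism of $\Gamma$ is induced by an automorphism of $T$, which the paper's identification of $A\big|_{\VV_1}$ with $\aut\Gamma$ quietly requires, at the price of the routine but fiddly verification that your extension is a well-defined automorphism. Both arguments rest on the same rigidity fact --- distinct vertices of a tree have at most one common neighbour, so an automorphism fixing two neighbours of $v$ fixes $v$ --- which also underlies your homeomorphism claim and the paper's observation that for every finite $\Phi \subseteq VT$ there is a finite $\Phi' \subseteq \VV_1$ with $A_{(\Phi')} \leq A_{(\Phi)}$.
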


\begin{proof}
Let $A:= (\aut T)_{\{\VV_1\}}$ and let $\varphi : A \rightarrow A \big|_{\VV_1}$ be the map taking $g \in A$ to $g \big|_{\VV_1} \in A \big|_{\VV_1}$. Since the tree $T$ contains no vertices of valency one, the kernel of this map is trivial and $\varphi$ is an isomorphism. 
Moreover, $\varphi$ is a topological isomorphism. This is not difficult to see, because for each finite subset $\Phi \subseteq V\T$, one can find finite subsets $\Phi_1 \subseteq \VV_1$ and $\Phi_2 \subseteq \VV_2$ such that the pointwise stabilisers $A_{(\Phi_1)}$ and $A_{(\Phi_2)}$ fix $\Phi$ pointwise.
It follows that $G \Big|_{\VV_1}$ is closed in $\sym(\VV_1)$. A symmetric argument shows that $G \Big|_{\VV_2}$ is closed in $\sym(\VV_2)$.

Now suppose $G_v \big|_{B(v)}$ is closed for all $v \in VT$. Note that (\ref{item1:lem:tree_locally_compact}) is true if and only if for all $v \in \VV_1$ and $w \in \VV_2$ we have that all orbits of $G_w \big|_{B(w)}$ are finite and all suborbits of $G_v \big|_{B(v)}$ are finite. On the other hand (\ref{item2:lem:tree_locally_compact}) is true if and only if $G \Big|_{\VV_2}$ is subdegree-finite.
An argument similar to that used in the proof of Proposition~\ref{prop:finite_subdegrees} can therefore be used to show that
(\ref{item1:lem:tree_locally_compact}) and (\ref{item2:lem:tree_locally_compact}) are equivalent.
\end{proof}

Henceforth in this section, $T$ is the $(|\sta|,|\stb|)$-biregular tree, $\col$ is a legal colouring of $T$ with $\sta$ and $\stb$, and we bipartition the vertices of $T$ into sets $\Va$ and $\Vb$ in the usual way.

\begin{theorem} \label{thm:TopoProperties1}
Suppose $\gpa \leq \sym(\sta)$ and $\gpb \leq \sym(\stb)$ are
closed with nontrivial degree.
Then $\Boxproduct{\gpa}{\gpb}{}$ is closed in $\sym(\Vb)$ and the following are equivalent:
\begin{enumerate}
\item \label{Item:PtStabCompact}
	every point stabiliser in $\Boxproduct{\gpa}{\gpb}{}$ is compact in $\sym(\Vb)$;
\item
	for all $w \in \Vb$ the point stabiliser $(\Universal{\gpa}{\gpb}{})_w$ is compact in $\aut T$;
\item \label{Item:NCompact}
	$\gpb$ is compact and every point stabiliser in $\gpa$ is compact.
\end{enumerate}
\end{theorem}

\begin{proof} The result follows immediately from Lemma~\ref{lem:product_is_closed}, Lemma~\ref{lem:tree_locally_compact} and the fact that  $\Boxproduct{\gpa}{\gpb}{}$ and $\Universal{\gpa}{\gpb}{}$ are isomorphic as topological groups.
\end{proof}

\begin{theorem} \label{thm:TopoProperties2}
Suppose $\gpa \leq \sym(\sta)$ and $\gpb \leq \sym(\stb)$ are closed with nontrivial degree.
Then the following are equivalent:
\begin{enumerate}
\item \label{item:a:thm:TopoProperties}
	$\Universal{\gpa}{\gpb}{}$ is locally compact;
\item \label{item:b:thm:TopoProperties}
	all point stabilisers in $\gpa$ and all point stabilisers in $\gpb$ are compact; and
\item \label{item:c:thm:TopoProperties}
	the stabiliser in $\Universal{\gpa}{\gpb}{}$ of any edge in $\T$ is compact.
\end{enumerate}
\end{theorem}

\begin{proof}
It is obvious that (\ref{item:c:thm:TopoProperties}) implies (\ref{item:a:thm:TopoProperties}).
Suppose $G := \Universal{\gpa}{\gpb}{\col}$ is locally compact. Then there exists some finite set $\Phi$ of vertices such that $G_{(\Phi)}$ has all orbits finite. Fix $\pta \in \sta$ and recall the definition of the function $\pi_{\mathcal{P}}$ for
graph path
$\mathcal{P}$ in $T$ from Section~\ref{section:Simplicity}. Now $VT \setminus \Phi$ contains infinitely many vertices $v$ such that $\col \big|_{\overline{\ArcsFrom}(v)} = \pta$ and therefore there is an edge $\mathcal{P} := \{v,w\}$ in $T$ such that $\Phi$ is contained in $\pi^{-1}_{\mathcal{P}}(v)$ and $\col \big|_{\overline{\ArcsFrom}(v)} = \pta$. Note that $w \in \Va$. By Theorem~\ref{thm:prod_has_prop_P}, $G$ has Tits' independence property (P) and so the pointwise stabilisers $G_{(\pi^{-1}_{\mathcal{P}}(v))}$ and $G_{(v,w)}$ induce the same permutation group on $B(w)$, and this induced group is permutation isomorphic to $\gpa_\pta$. Now $G_{(\pi^{-1}_{\mathcal{P}}(v))} \leq G_{(\Phi)}$ and all orbits of $G_{(\Phi)}$ are finite, so it follows that all orbits of $\gpa_\pta$ are finite. For $\ptb \in \stb$, a symmetric argument shows that all orbits of $\gpb_\ptb$ are finite. Hence $\gpa_\pta$ and $\gpb_\ptb$ are compact
and we have shown that (\ref{item:a:thm:TopoProperties}) implies (\ref{item:b:thm:TopoProperties}).

Now suppose that all point stabilisers in $\gpa$ and $\gpb$ are compact; in particular this means that all orbits of point stabilisers in $\gpa$ and in $\gpb$ are finite. Choose two adjacent vertices $v,w \in VT$. We claim that $G_{(v,w)}$ has only orbits of finite length, from which it follows that
$G_{(v,w)}$ is compact.
Fix some vertex $u \in VT\setminus \{v,w\}$ and note that either $u$ is closest to $v$, or $u$ is closest to $w$. Without loss of generality, let us suppose that $u$ is closest to $w$; then the unique geodesic from $v$ to $u$ contains $w$. Let $y_0, y_1, y_2, \ldots, y_n$ denote this geodesic, with $y_0 = v, y_1 = w$ and $y_n = u$. Now
\[
|G_{(v,w)} u| = |G_{(y_0,y_1)} : G_{(y_0,y_1,y_2,\ldots,y_n)}| \leq \prod_{i=0}^{n-2} |G_{(y_i, y_{i+1})} y_{i+2}|,
\]
and each orbit $|G_{(y_i, y_{i+1})} y_{i+2}|$ in the product is finite because $y_i, y_{i+2} \in B(y_{i+1})$ and $G_{(y_i, y_{i+1})} \big|_{B(y_{i+1})}$ is permutation isomorphic to a point stabiliser in $\gpa$ or a point stabiliser in $\gpb$.
We have established our claim, and have thus shown that (\ref{item:b:thm:TopoProperties}) implies (\ref{item:c:thm:TopoProperties}).
\end{proof}

An example of a closed subgroup $\gpa \leq \sym(\sta)$ that is locally compact with non-compact point stabilisers is provided by any sharply $2$-transitive group of permutations of a countably infinite set.

\begin{theorem} \label{thm:tree_compactly_generated}
Suppose $\gpa \leq \sym(\sta)$ and $\gpb \leq \sym(\stb)$ have nontrivial degree and, under the permutation topology, both groups are closed with compact point stabilisers. Then all point stabilisers in $\Universal{\gpa}{\gpb}{}$ are compactly generated if and only if $\gpa$ and $\gpb$ are both compactly generated.

Moreover, if $\gpa$ and $\gpb$ are both compactly generated with only finitely many orbits, and $\gpa$ or $\gpb$ is transitive, then $\Universal{\gpa}{\gpb}{}$ is compactly generated.
\end{theorem}

\begin{proof} Write $G:=\Universal{\gpa}{\gpb}{\col}$.
Since $\gpa$ and $\gpb$ are closed with compact point stabilisers, $G$ is locally compact and all edge stabilisers in $G$ are compact by Theorem~\ref{thm:TopoProperties2}.

Let $v$ be a vertex in $\T$, and suppose that $G_v$ is compactly generated. Fix some vertex $w$ adjacent to $v$ in $\T$. If $S$ is a compact generating set for $G_v$, then $\bigcup_{s \in S} s G_{(v,w)}$ is an open cover of $S$, and therefore one can find a finite subset $\{s_1, \ldots s_r\}$ of $S$ such that $G_v = \langle G_{(v,w)}, s_1, \ldots, s_r \rangle$. If $v \in \Va$ then $G_v \big|_{B(v)}$ is permutationally isomorphic to $\gpa$ by Lemma~\ref{lem:product_locally_MN},
and under this isomorphism it is clear that $G_{v,w}$ is mapped to a point stabiliser in $\gpa$ (which is compact by assumption). Hence $\gpa$ is compactly generated. If $v \in \Vb$ then a symmetric argument shows that $\gpb$ is compactly generated.

Now assume instead that $\gpa$ and $\gpb$ are compactly generated. Suppose $v \in \Va$, choose $w \in B(v)$, and set $\pta := \col(v, w) \in \sta$. The stabiliser $\gpa_\pta$ is compact and open. If $S$ is a compact generating set for $\gpa$, then there exists some finite subset $\{s_1, \ldots, s_r\} \subseteq S$ such that
$\gpa = \langle \gpa_\pta, s_1, \ldots, s_r \rangle$.
By Lemma~\ref{lem:product_locally_MN},
$G_v \big|_{B(v)}$ is permutationally isomorphic to $\gpa$ 
and under this isomorphism $G_{(v,w)}\big|_{B(v)}$ is mapped to $\gpa_{\pta}$.
It follows that there exists $\{g_1, \ldots, g_r\} \subseteq G_v$ such that $H := \langle G_{(v,w)}, g_1, \ldots, g_r \rangle$ and $G_v$ induce the same permutation group on $B(v)$.
Hence, for any $g \in G_v$ there exists some $h \in H$ such that $w^g = w^h$, and so $gh^{-1} \in G_{(v,w)} \leq H$. Thus $G_v$ is equal to $H$, and therefore $G_v$ is compactly generated. A symmetric argument shows that if $v \in \Vb$, then $G_v$ is compactly generated.

Finally we note that if $\gpa$ and $\gpb$ have only finitely many orbits and either $\gpa$ or $\gpb$ (or both) is transitive, then
by Lemma~\ref{lem:QuotientIsAGraph},
the quotient $G \backslash \T$ is a finite tree, and so $G$ is generated by the vertex groups of $G \backslash \T$. It follows that there exists a finite set $\Phi \subseteq VT$ such that $G = \langle G_v : v \in \Phi \rangle$. Since $\Phi$ is finite and $G_v$ is compactly generated for all $v \in \Phi$, we have that $G$ is compactly generated.
\end{proof}

Recall that a permutation group is {\em semi-regular} if every point stabiliser is trivial.
A semi-regular action is also called a {\em free} action. 

\begin{theorem} \label{thm:discrete}
Suppose $\gpa \leq \sym(\sta)$ and $\gpb \leq \sym(\stb)$
have nontrivial degree.
Then the following are equivalent: 
(i) $\Universal{\gpa}{\gpb}{}$
is discrete; (ii) $\gpa$ and $\gpb$ are semi-regular; and (iii) $\Universal{\gpa}{\gpb}{}$ acts semi-regularly on the edges of $\T$.
\end{theorem}

\begin{proof} 
Let 
$G:=\Universal{\gpa}{\gpb}{\col}$. Suppose $\gpa$ and $\gpb$ are both semi-regular. If $w, w' \in \Vb$ are at distance two in $\T$, there is a unique element $v \in \Va$ that is adjacent to both vertices, and $G_{w, w'} = G_{w, v, w'}$. By Lemma~\ref{lem:product_locally_MN}, $G_{w, v}$ fixes $B(w)$ and $B(v)$ pointwise, and, since $T$ is connected, $G_{w, v}$ must therefore fix $VT$ pointwise; it is thus trivial. Hence
$\Universal{\gpa}{\gpb}{}$ is discrete.

Let $\Phi \subseteq \Vb$ be any finite set.
Suppose $\gpa$ is not semi-regular, and choose $\pta \in \sta$ such that $\gpa_\pta$ is nontrivial. 
For each $w' \in VT$, the connected components of $T \setminus B(w')$ each contain infinitely many vertices $w \in \Vb$ such that $\col \big|_{\overline{A}(w)} = \pta$, so we may choose a pair of adjacent vertices $v \in \Va, w \in \Vb$ such that $\col(v, w) = \pta$ and $\Phi$ is contained in the half-tree $T_{(w, v)}$.
Choose $\sigma \in \gpa_\pta \setminus \langle 1 \rangle$. By Lemma~\ref{lem:product_locally_MN}, there exists $h \in G_v$ such that $\col \big|_{A(v)} h \big|_{A(v)} \col \big|_{A(v)}^{-1} = \sigma$. Hence $h \in G_{v, w}$ and $h \not \in G_{(B(v))}$. By Lemma~\ref{lem:prod_has_independence}, there exists $g \in G_{v, w}$ such that $g \big|_{T_{(w, v)}}$ is trivial and $g \big|_{T_{(v, w)}} = h \big|_{T_{(v, w)}}$. Hence $g \in G_{(\Phi)}$ is nontrivial. A similar argument shows that if $\gpb$ is not semi-regular, then $G_{(\Phi)}$ is nontrivial. It follows that if $\gpa$ or $\gpb$ is not semi-regular, then
$\Universal{\gpa}{\gpb}{}$
is not discrete.
\end{proof}

\begin{corollary} Suppose $\sta$ and $\stb$ are countable, and $\gpa \leq \sym(\sta)$ and $\gpb \leq \sym(\stb)$ are closed. Then
\[|\Boxproduct{\gpa}{\gpb}{}| \leq \aleph_0 \quad \text{ or }  \quad |\Boxproduct{\gpa}{\gpb}{}| = 2^{\aleph_0},\]
with the former holding if and only if  $\gpa$ and $\gpb$ are semi-regular.
\end{corollary}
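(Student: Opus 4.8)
The plan is to prove the dichotomy by establishing three things: an automatic upper bound $|\Boxproduct{\gpa}{\gpb}{}|\le 2^{\aleph_0}$, the implication ``both semi-regular $\Rightarrow$ countable'', and the implication ``not both semi-regular $\Rightarrow$ exactly $2^{\aleph_0}$''. First I would observe that since $\sta$ and $\stb$ are countable, the $(|\sta|,|\stb|)$-biregular tree $\T$ has countably many vertices, so $\Vb$ is countable and $\sym(\Vb)$ has cardinality $\aleph_0^{\aleph_0}=2^{\aleph_0}$. Thus $|\Boxproduct{\gpa}{\gpb}{}|\le 2^{\aleph_0}$ in every case, and the only content is to rule out cardinalities strictly between $\aleph_0$ and $2^{\aleph_0}$ and to identify when each alternative occurs.

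For the easy direction, suppose $\gpa$ and $\gpb$ are both semi-regular. By Theorem~\ref{thm:discrete} the group $\Boxproduct{\gpa}{\gpb}{}$ is discrete in $\sym(\Vb)$. As $\Vb$ is countable, $\sym(\Vb)$ is second countable in the permutation topology, and a discrete subspace of a second-countable space is countable (each point is isolated by a distinct basic open set). Hence $|\Boxproduct{\gpa}{\gpb}{}|\le\aleph_0$, which gives one half of the ``former holds if and only if'' clause.

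For the converse I would exhibit $2^{\aleph_0}$ distinct elements whenever $\gpa$ or $\gpb$ fails to be semi-regular. Write $G:=\Universal{\gpa}{\gpb}{\col}$ and assume first that $\gpa$ is not semi-regular; fix $\pta\in\sta$ with $\gpa_{\pta}\ne 1$ and a nontrivial $\sigma\in\gpa_{\pta}$ (note this forces $|\sta|\ge 3$). For any $v\in\Va$, let $w\in\Vb$ be its unique neighbour with $\col(v,w)=\pta$. By Lemma~\ref{lem:product_locally_MN} there is $h\in G_{v}$ inducing $\sigma$ on $B(v)$; since $\sigma$ fixes $\pta$ we have $h\in G_{v,w}$. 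Applying Lemma~\ref{lem:prod_has_independence} to the arc $(w,v)$ produces $g\in G$ fixing the half-tree $\T_{(w,v)}$ pointwise and agreeing with $h$ on $\T_{(v,w)}$; because $\sigma$ moves a neighbour of $v$ other than $w$, this $g$ is nontrivial on $\Vb$ and supported inside the half-tree $H:=\T_{(v,w)}$. The case where instead $\gpb$ is not semi-regular is handled by the mirror construction at a vertex $w\in\Vb$, where the gadget moves an $\Va$-neighbour of $w$ and hence (as in the distance-two analysis in the proof of Theorem~\ref{thm:discrete}) moves a $\Vb$-vertex at distance two.

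The heart of the argument, and the step I expect to be the main obstacle, is to place these gadgets on pairwise disjoint half-trees so that they can be combined independently. Using that $\T$ is an infinite tree with branching at the relevant vertices, I would select countably many $v_1,v_2,\dots$ whose support half-trees $H_i:=\T_{(v_i,w_i)}$, together with their attaching neighbours $w_i$, are pairwise disjoint, and take the associated gadgets $g_i\in G$ supported on $H_i$. For each subset $S\subseteq\N$ the product $g_S:=\prod_{i\in S}g_i$ is a well-defined tree automorphism since the supports are disjoint; the decisive verification is that $g_S\in G$. This holds because at every vertex $g_S$ agrees with a single $g_i$ or with the identity, and at each attaching vertex $v_i$ the neighbour $w_i$ is fixed, so the local action of $g_S$ coincides with that of $g_i$ and therefore lies in $\gpa$ (respectively $\gpb$). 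Finally, the assignment $S\mapsto g_S\big|_{\Vb}$ is injective: if $i$ lies in the symmetric difference of $S$ and $S'$, then $g_S$ and $g_{S'}$ disagree on a $\Vb$-vertex moved by $g_i$. This yields $2^{\aleph_0}$ distinct elements, so $|\Boxproduct{\gpa}{\gpb}{}|=2^{\aleph_0}$, completing the dichotomy and showing the former alternative holds exactly when $\gpa$ and $\gpb$ are semi-regular.
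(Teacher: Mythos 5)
Your proposal is correct, but it takes a genuinely different route from the paper's. The paper's proof is a short reduction: by Theorem~\ref{thm:TopoProperties} the group $\Boxproduct{\gpa}{\gpb}{}$ is closed in $\sym(\Vb)$, and a theorem of Evans (\cite[Theorem 1.1]{evans87}), applied with the closed subgroup $\langle 1 \rangle$, immediately yields the dichotomy: either $|\Boxproduct{\gpa}{\gpb}{}| = 2^{\aleph_0}$ or the pointwise stabiliser of some finite subset of $\Vb$ is trivial, i.e.\ the group is discrete; Theorem~\ref{thm:discrete} then converts discreteness into semi-regularity of $\gpa$ and $\gpb$. You replace the appeal to Evans' theorem by two elementary arguments: discreteness implies countability (via second countability of $\sym(\Vb)$; equivalently, a group acting freely on the orbit of a finite tuple from a countable set is countable), and, when semi-regularity fails, an explicit construction of continuum many elements as products $\prod_{i \in S} g_i$ of gadgets with pairwise disjoint half-tree supports, built from Lemma~\ref{lem:product_locally_MN} and Lemma~\ref{lem:prod_has_independence} exactly as in the proof of Theorem~\ref{thm:discrete}. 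Your two key verifications are sound: the local action of $g_S$ at any vertex agrees with that of a single $g_i$ or the identity, so $g_S \in \Universal{\gpa}{\gpb}{\col}$; and the disjoint placement is possible despite $w_i$ being forced to be the $\pta$-coloured neighbour of $v_i$, because any vertex $w \in \Vb$ of in-colour $\pta$ has \emph{every} neighbour $v$ admitting $w$ as its $\pta$-coloured neighbour, and such vertices occur throughout the tree, so the gadgets can be placed inductively in nested complementary half-trees (this step deserves the explicit induction, but it is routine). What your approach buys: it is self-contained, avoiding the descriptive-set-theoretic input, and it never uses the hypothesis that $\gpa$ and $\gpb$ are closed, so it proves the stronger statement that the dichotomy holds for arbitrary nontrivial $\gpa$ and $\gpb$. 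What the paper's approach buys: brevity, and closedness would also let you shortcut your main verification, since $g_S$ is a pointwise limit of its finite subproducts and therefore lies in the closed group $\Universal{\gpa}{\gpb}{\col}$ automatically.
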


\begin{proof} By Theorem~\ref{thm:TopoProperties1}, the group $G := \Boxproduct{\gpa}{\gpb}{}$ is a closed group of permutations of the countably infinite set $\Vb$. Since the trivial group $\langle 1 \rangle$ is closed, we may deduce from a theorem of
David M.~Evans
(\cite[Theorem 1.1]{evans87}) that either $|G| = 2^{\aleph_0}$ or the pointwise stabiliser (in $G$) of some finite subset of $\Vb$ is trivial. 
But the pointwise stabiliser of some finite subset of $\Vb$ is trivial if and only if $G$ is discrete.
The corollary now follows from Theorem~\ref{thm:discrete}.
\end{proof}

%
%
\section{An uncountable set of compactly generated simple groups}

In \cite{CapraceDeMedts}, $\mathcal{S}$ is defined to be the set of
non-discrete, topologically simple, 
totally disconnected, locally compact groups.
Pierre-Emmanuel Caprace and Tom De Medts
remark that they do not know whether $\mathcal{S}$ contains uncountably many pairwise non-isomorphic compactly generated groups. In this section we prove that $\mathcal{S}$
contains uncountably many isomorphism classes
by constructing the first uncountable set of pairwise non-isomorphic simple groups that are non-discrete, compactly generated, totally disconnected and locally compact. In fact, we construct $2^{\aleph_0}$ such groups.

All the simple groups constructed in this paper have compact open subgroups that split nontrivially as direct products. The paper \cite{CapraceReidWillis} provides general results that apply to all compactly generated groups in $\mathcal{S}$ that enjoy the latter property; for example, they are all abstractly simple and non-amenable. Examples of compactly generated groups in $\mathcal{S}$ that are structurally similar to the groups we describe here are due to
Marc Burger and Shahar Mozes
(\cite{BurgerMozes}),
R\"{o}gnvaldur G.~M\"{o}ller and Jan Vonk
(\cite{mollervonk}), and
Christopher Banks, Murray Elder and George A.~Willis
(\cite{BanksElderWillis}).\\

Recall from \cite[pp. 58]{serre:trees} the definition of Serre's property (FA). Suppose $G$ is a group acting on a tree $T$ without inversion. Let $\Fix_G(T)$ be the set of vertices of $T$ that are fixed by all elements in $G$. A group $G$ has property (FA) if $\Fix_G(T)$ is non-empty for any tree $T$ on which $G$ acts without inversion. 
There is a topological version of (FA), which we denote here by (FA$_t$), whereby a topological group $G$ has property (FA$_t$) if every continuous action of $G$ on a tree has either a fixed vertex or a (setwise) fixed edge (see \cite[Definition 2.3.4]{propertyT}, for example). Compact topological groups obviously satisfy (FA$_t$), and it is also clear that a topological group $G$ with a closed normal subgroup $N$ such that $N$ and $G/N$ both have (FA$_t$), itself enjoys (FA$_t$).

A particularly rich class of discrete groups that all have property (FA) is the class of finitely generated torsion groups.

\begin{proposition}[{\cite[Example 6.3.1]{serre:trees}}] \label{serre_fg_torsion_gps_have_FA} A finitely generated torsion group has property (FA). \qed
\end{proposition}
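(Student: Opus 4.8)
The plan is to show that for an arbitrary tree $T$ on which a finitely generated torsion group $G = \langle g_1, \ldots, g_n \rangle$ acts without inversion, the set $\Fix_G(T)$ is non-empty, i.e.\ $G$ fixes a vertex. First I would recall the basic dichotomy for a single tree automorphism acting without inversion: $g$ is either \emph{elliptic}, meaning it fixes some vertex, or \emph{hyperbolic}, meaning it translates a bi-infinite geodesic (its axis) by a positive amount. A hyperbolic element has infinite order, since its $k$-th power translates the axis by $k$ times the translation length. As every element of $G$ has finite order, no element of $G$ is hyperbolic; hence each $g \in G$ is elliptic, and its fixed-point set $\Fix(g)$ is a non-empty subtree of $T$.

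Next I would invoke Serre's product lemma: if $g$ and $h$ are elliptic and the product $gh$ is also elliptic, then $\Fix(g) \cap \Fix(h) \neq \emptyset$. The point is that if the two fixed subtrees were disjoint, there would be a unique geodesic bridge joining them, of some length $d > 0$, and one checks that $gh$ then translates this bridge and is hyperbolic with translation length $2d$; ellipticity of $gh$ therefore forces the subtrees to meet. Applying this to the generators, for each pair $i, j$ the three elements $g_i$, $g_j$ and $g_i g_j$ all lie in the torsion group $G$ and so are all elliptic, whence $\Fix(g_i) \cap \Fix(g_j) \neq \emptyset$.

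Finally I would use the Helly property of subtrees of a tree: a finite family of pairwise-intersecting subtrees has a common vertex. Since the subtrees $\Fix(g_1), \ldots, \Fix(g_n)$ intersect pairwise, the intersection $\bigcap_{i=1}^n \Fix(g_i)$ is non-empty, and any vertex in it is fixed by every generator and hence by all of $G$. As $T$ was arbitrary, $G$ has property (FA). The main obstacle is the product lemma of the second paragraph: establishing that disjoint fixed subtrees force $gh$ to be a hyperbolic translation along the connecting bridge is the genuinely geometric step, where the tree structure (a unique bridge, absence of cycles) is essential, and both the reduction to pairwise intersections and the closing Helly argument rest on it. (Alternatively, one could apply the characterisation stated just above: $G$ is countable and finitely generated, has no quotient isomorphic to $\Z$ because every quotient of a torsion group is torsion while $\Z$ is not, and is not a non-trivial amalgam because any such amalgam contains an element of infinite order; but this route leans on the heavier machinery of \cite[Theorem 15]{serre:trees}.)
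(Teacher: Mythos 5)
Your proof is correct, and since the paper offers no argument of its own here (it simply cites the result to Serre's \emph{Trees}), the right comparison is with Serre's proof --- which is exactly what you have reconstructed: torsion rules out hyperbolic elements, the product lemma (disjoint fixed subtrees of elliptic $g,h$ force $gh$ to be hyperbolic along the bridge) gives pairwise intersection of the $\Fix(g_i)$, and the Helly property for subtrees yields a common fixed vertex for the generators and hence for $G$. No gaps; your aside about the alternative route via the amalgam/$\Z$-quotient characterisation is also accurate, including the observation that it is the heavier path.
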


\begin{theorem}[{\cite[Theorem 28.7]{olshanski}}] \label{thm:monster} For every sufficiently large prime number $p$, there is a continuum of pairwise non-isomorphic infinite groups of exponent $p$ all of whose proper nontrivial subgroups have order $p$. \qed
\end{theorem}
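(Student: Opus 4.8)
The plan is to realise these groups as quotients of the free group of rank two by an inductively constructed infinite set of relators, following Olshanskii's method of graded presentations and van Kampen diagrams. First I would fix a sufficiently large prime $p$ and start from the free group $F$ on two generators. One then builds the defining relators in stages indexed by a rank $i$: at each stage one adjoins relators that force every suitably short element to satisfy $x^p = 1$, and relators that force every two-generated subgroup which is not already cyclic to coincide with the whole group. In the direct limit this makes every proper nontrivial subgroup cyclic of order $p$ and gives exponent $p$. Controlling how these relators interact requires imposing a small-cancellation-type hypothesis on the partial presentations — the graded analogue of a $C'(\lambda)$ condition — under which reduced diagrams over each partial presentation obey a Greendlinger-style lemma.

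The heart of the argument, and the step I expect to be by far the hardest, is precisely the \emph{non-collapse}: verifying that after adjoining all relators the quotient is still infinite, and that its only finite subgroups are the copies of the cyclic group of order $p$. In Olshanskii's framework this is obtained from delicate inductive estimates on the geometry of diagrams over the graded presentation — bounding the sizes of contiguity subdiagrams and ruling out any diagram that would witness an unwanted relation among generators of low rank. I would cite this diagram calculus and the verification of the grading conditions wholesale rather than reproduce it, since on its own it amounts to a monograph's worth of work and is exactly the content abstracted away by the citation to \cite[Theorem 28.7]{olshanski}.

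To pass from a single group to a continuum, I would exploit the freedom in the construction: at infinitely many independent stages there is a genuine binary choice of whether to impose a particular relator, both options remaining compatible with the small-cancellation hypotheses. Indexing the choices by subsets $S \subseteq \N$ produces groups $G(S) = F / N(S)$ with $N(S) \neq N(S')$ for $S \neq S'$, each of which is a Tarski monster of exponent $p$ by the previous step. Finally, to extract $2^{\aleph_0}$ pairwise non-isomorphic groups, I would argue by counting: each $G(S)$ is countable, so for a fixed countable group $H$ an isomorphism $G(S) \cong H$ corresponds to a surjection $F \to H$, and a finitely generated group admits only countably many homomorphisms into a fixed countable group. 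Hence at most countably many of the $N(S)$ arise as kernels of maps onto $H$, so each isomorphism class is realised by at most countably many $S$; since there are $2^{\aleph_0}$ groups in the family, there must be $2^{\aleph_0}$ isomorphism classes among them.
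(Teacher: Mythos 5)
The paper contains no proof of this statement---it is imported verbatim from Ol'Shanski{\u\i}'s monograph as a cited result (hence the immediate tombstone)---so the relevant comparison is with the source's argument, which your sketch follows faithfully: a graded small-cancellation presentation over the rank-two free group $F$ for a single Tarski monster of exponent $p$, independent binary choices of relators at infinitely many ranks to produce $2^{\aleph_0}$ distinct normal subgroups $N(S)$, and the correct counting argument that a fixed countable group admits only countably many homomorphisms from $F$, so each isomorphism class accounts for at most countably many of the kernels. Since both you and the paper defer the genuinely hard content (the non-collapse estimates via contiguity subdiagrams and the verification that both branches of each binary choice respect the grading conditions) to the very same citation, your proposal is correct in outline and takes essentially the same approach.
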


In \cite[pp. 304]{olshanski}
Ol'shanski{\u\i} 
notes that taking $p > 10^{75}$ in the above theorem is sufficient; let us now fix such a prime $p$. The groups whose existence is guaranteed by Theorem~\ref{thm:monster} we shall call
{\em Tarski--Ol'shanski{\u\i} Monsters}.

Let $Q$ be a 
Tarski--Ol'shanski{\u\i} Monster.
It is $2$-generated and torsion, so by Proposition~\ref{serre_fg_torsion_gps_have_FA} it has property (FA). It is easily seen to be simple, since if $N$ is a proper normal subgroup of $Q$, then $Q/N$ is infinite and torsion so it contains a nontrivial finite subgroup $H/N$, with $N < H < Q$.\\

Determining whether or not two box products are isomorphic can be difficult; it will be explored in detail in subsequent papers. For our purposes in this paper, we note two elementary results: Lemma~\ref{lem:subgroup_iso} and Lemma~\ref{lemma:topo_fa}.

\begin{lemma} \label{lem:subgroup_iso} Let $\gpa_1, \gpa_2, \gpb_1, \gpb_2$ be
permutation groups of nontrivial degree, with $\gpa_1$ nontrivial.
Suppose $\gpa_1$ has property (FA), and no nontrivial quotient of $\gpa_1$ is isomorphic to any subgroup of $\gpa_2$ or $\gpb_2$. Then
$\Universal{\gpa_1}{\gpb_1}{}$
and 
$\Universal{\gpa_2}{\gpb_2}{}$
are not (abstractly) isomorphic.
\end{lemma}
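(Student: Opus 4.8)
The plan is to realise both box products as universal groups acting faithfully on biregular trees, and then to use property (FA) to trap a copy of $\gpa_1$ inside a single vertex stabiliser of the second tree. First I would record the reduction: the restriction homomorphism $\Universal{\gpa}{\gpb}{\col} \to \sym(\Vb)$ is injective (exactly as in the proof of Lemma~\ref{lem:closure}, since $\T$ has no vertex of valency one because $|\sta|, |\stb| \geq 2$), so $\Boxproduct{\gpa}{\gpb}{}$ is abstractly isomorphic to $\Universal{\gpa}{\gpb}{\col}$. Hence any abstract isomorphism $\Boxproduct{\gpa_1}{\gpb_1}{} \cong \Boxproduct{\gpa_2}{\gpb_2}{}$ yields an abstract isomorphism $\theta : \Universal{\gpa_1}{\gpb_1}{\col_1} \to \Universal{\gpa_2}{\gpb_2}{\col_2}$, where the two universal groups act faithfully on biregular trees $\T_1$ and $\T_2$; I will argue for a contradiction. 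By Proposition~\ref{iso_subgroups}, fixing any $v \in \Va$ in $\T_1$ gives a subgroup $\hat{\gpa_1}(v) \cong \gpa_1$. Since property (FA) is an invariant of the abstract isomorphism type, the group $P := \theta(\hat{\gpa_1}(v)) \leq \Universal{\gpa_2}{\gpb_2}{\col_2}$ is a nontrivial subgroup with property (FA).

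Next I would use that $\Universal{\gpa_2}{\gpb_2}{\col_2}$ stabilises the bipartition of $\T_2$ and therefore acts on $\T_2$ without inversion; consequently so does $P$, and property (FA) supplies a vertex $u \in V\T_2$ fixed by $P$. The heart of the proof is the local claim: \emph{whenever $P$ fixes a vertex $u'$ of $\T_2$, it fixes every neighbour of $u'$.} To see this, note $P \leq \left ( \Universal{\gpa_2}{\gpb_2}{\col_2} \right )_{u'}$, and restricting the induced-action homomorphism $g \mapsto g \big|_{B(u')}$ to $P$ gives a homomorphism whose image is a subgroup of the local group $\left ( \Universal{\gpa_2}{\gpb_2}{\col_2} \right )_{u'} \big|_{B(u')}$, which by Lemma~\ref{lem:product_locally_MN} is permutation isomorphic to $\gpa_2$ if $u' \in \Va$ and to $\gpb_2$ if $u' \in \Vb$. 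This image is simultaneously a quotient of $P \cong \gpa_1$ and isomorphic to a subgroup of $\gpa_2$ or $\gpb_2$; by hypothesis no nontrivial quotient of $\gpa_1$ has this property, so the image is trivial, i.e.\ $P$ fixes $B(u')$ pointwise.

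Finally, applying the local claim inductively along paths and using that $\T_2$ is connected, the set of $P$-fixed vertices is all of $V\T_2$. Since $\Universal{\gpa_2}{\gpb_2}{\col_2} \leq \aut \T_2$ acts faithfully, it follows that $P$ is trivial, contradicting $P \cong \gpa_1 \neq 1$. I expect the only genuinely delicate step to be the identification of the image of $P$ in the local group as being at once a quotient of $\gpa_1$ and a subgroup of $\gpa_2$ or $\gpb_2$, which is precisely where the quotient hypothesis is consumed; the remaining ingredients (the reduction to $\Universal{\gpa}{\gpb}{\col}$, invariance of property (FA), the no-inversion observation, and the connectedness induction) are routine. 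A minor point requiring care is that $u'$ may lie in either part of the bipartition, so that the relevant local group is $\gpa_2$ or $\gpb_2$ depending on $u'$; both cases are handled because the hypothesis forbids nontrivial quotients of $\gpa_1$ from embedding into either factor.
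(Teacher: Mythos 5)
Your proposal is correct and follows essentially the same route as the paper's proof: both reduce to the universal groups, plant a copy of $\gpa_1$ via Proposition~\ref{iso_subgroups}, use property (FA) together with the no-inversion observation to obtain a fixed vertex in $\T_2$, and then show the fixed-vertex set propagates to all of $\T_2$ because the image in each local group is a quotient of $\gpa_1$ embedding in $\gpa_2$ or $\gpb_2$, hence trivial, contradicting nontriviality of $\gpa_1$ by faithfulness. The only cosmetic difference is that you make explicit two points the paper leaves implicit (the injectivity of the restriction to $\sym(\Vb)$ and the invariance of property (FA) under abstract isomorphism), which is fine.
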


\begin{proof} For $i = 1, 2$, suppose $\gpa_i \leq \sym(\sta_i)$ and $\gpb_i \leq \sym(\stb_i)$ and let $T_i$ denote the $(|\sta_i|, |\stb_i|)$-biregular tree. Let $\col_i$ be a 
legal colouring with $\sta_i$ and $\stb_i$, 
and write $G := \Universal{\gpa_1}{\gpb_1}{\col_1}$ and $H := \Universal{\gpa_2}{\gpb_2}{\col_2}$.

Let us suppose, for a contradiction, that $G$ and $H$ are isomorphic. By Proposition~\ref{iso_subgroups}, $G$ contains a subgroup that is isomorphic to $\gpa_1$, and so $\gpa_1$ is isomorphic to some subgroup $K$ of $H$.

On one hand, $K$ cannot fix any vertex in $T_2$. Indeed, if $K$ fixes some vertex $v \in T_2$, then 
$K / K_{(B(v))} \cong K \big|_{B(v)}$
$\leq H_v \big|_{B(v)}$. Since $H_v \big|_{B(v)}$ is isomorphic to $\gpa_2$ or $\gpb_2$, this is only possible if the quotient $K / K_{(B(v))}$ is trivial; that is, if $K$ fixes $B(v)$ pointwise. But if this is so, then we can repeat this argument, since $K$ now fixes $w \in B(v)$. Since $T_2$ is connected, it follows that $K$ must be trivial, which is absurd.

On the other hand, $K$ must fix a vertex in $T_2$, since $K$ acts on $T_2$ without inversion and has property (FA).
\end{proof}

\begin{lemma} \label{lemma:topo_fa}  Let $\gpa_1, \gpa_2, \gpb_1, \gpb_2$ be closed permutation groups of nontrivial degree such that, under the permutation topology, $\gpb_i$ and all point stabilisers of $\gpa_i$ are compact for $i=1,2$.
Suppose each $\gpa_i$ is transitive, has the topological property (FA$_t$) and admits no nontrivial compact normal subgroups. If $\gpa_1$ and $\gpa_2$ are not topologically isomorphic, then $\Universal{\gpa_1}{\gpb_1}{}$ and $\Universal{\gpa_2}{\gpb_2}{}$ are not topologically isomorphic.
\end{lemma}

\begin{proof} We adopt the notation used in the first paragraph of the proof of Lemma~\ref{lem:subgroup_iso}. For $i = 1,2$, fix $v_i \in V_{\sta_i} \subseteq VT_i$,
and note that by Theorem~\ref{thm:TopoProperties2}, $G_{(B(v_1))}$ and $H_{(B(v_2))}$ are proper compact and normal subgroups of (respectively) the non-compact groups $G_{v_1}$ and $H_{v_2}$.

A compact normal subgroup $K$ of $G_{v_1}$ has only orbits of finite length; the closure of $K \big|_{B(v_1)}$ in $\sym(B(v_1))$ is thus compact and normal in $\gpa_1$, and so by assumption it is trivial. Hence every compact normal subgroup of $G_{v_1}$ is contained in $G_{(B(v_1))}$. A symmetric argument holds for $H_{v_2}$ and $H_{(B(v_2))}$.

Suppose $\varphi: G \rightarrow H$ is a topological isomorphism. The compact group $G_{(B(v_1))}$ has (FA$_t$) and $G_{v_1} / G_{(B(v_1))}$ (which is topologically isomorphic to $\gpa_1$ by Proposition~\ref{prop:quotient_iso_to_M}) also has (FA$_t$). By this, and a symmetric argument for $H$, we see that all point stabilisers in $G$ and $H$ have (FA$_t$).

Now $\varphi$ and $\varphi^{-1}$ are continuous actions of $G$ and $H$ respectively, so there exists $v_1^\ast \in VT_2$ and $v_1^{\ast\ast} \in VT_1$ such that $\varphi(G_{v_1}) \leq H_{v_1^\ast} \leq \varphi(G_{v_1^{\ast\ast}})$. Since $\gpa_1$ is transitive, $G_{v_1}$ fixes no vertex in $T_1\setminus \{v_1\}$ so we must have that $v_1 = v_1^{\ast\ast}$. Hence $\varphi(G_{v_1}) = H_{v_1^\ast}$. Since $G_{v_1}$ is not compact, Theorem~\ref{thm:TopoProperties1} implies that $v_1^\ast \in V_{\sta_2}$.
Let $v_2 := v_1^\ast \in V_{\sta_2}$ and note that $\varphi(G_{v_1}) = H_{v_2}$.

Now $\varphi(G_{(B(v_1))})$ is a compact normal subgroup of $H_{v_2}$ so it is contained in $H_{(B(v_2))}$, and $\varphi^{-1}(H_{(B(v_2))})$ is a compact normal subgroup of $G_{v_1}$ so it is contained in $G_{(B(v_1))}$. Hence
$\varphi(G_{(B(v_1))}) = H_{(B(v_2))}$. It follows then that $G_{v_1} / G_{(B(v_1))}$ and $H_{v_2} / H_{(B(v_2))}$ are isomorphic as topological groups. The result now follows immediately from Proposition~\ref{prop:quotient_iso_to_M}.
\end{proof}

Note that the converse to Lemma~\ref{lemma:topo_fa} is clearly false, as one should expect in light of Theorem~\ref{thm:discrete}. Indeed, a countably infinite simple group $G$ with a finite nontrivial subgroup has two different transitive permutation representations in $\sym(\mathbb{N})$, one with all stabilisers trivial which we denote by $\gpa_1$, and another with all stabilisers finite and nontrivial which we denote by $\gpa_2$. Both $\gpa_1$ and $\gpa_2$ are discrete under the permutation topology and are therefore isomorphic as topological groups. However, if $\gpb$ is a semi-regular permutation group of degree at least two, then the group $\Universal{\gpa_1}{\gpb}{}$ is discrete but $\Universal{\gpa_2}{\gpb}{}$ is not.\\

Let $Q$ be a Tarski--Ol'shanski{\u\i} Monster group and fix a nontrivial proper subgroup $H \leq Q$. The group $Q$ is simple and it acts faithfully and transitively on the coset space $\sta := (Q : H)$. We call the permutation group induced by $Q$ on $\sta$ a {\em Tarski--Ol'shanski{\u\i} Monster permutation group}. By Theorem~\ref{thm:monster}, there are $2^{\aleph_0}$ pairwise (abstractly) non-isomorphic Tarski--Ol'shanski{\u\i} Monster permutation groups.

For the following theorem, note that the trivial subgroup of a permutation group of nontrivial degree is generated by point stabilisers, as is any non-regular primitive permutation group.

\begin{theorem} \label{thm:infinitely_many_simple_groups} 
There are $2^{\aleph_0}$ (abstractly) non-isomorphic groups of the form $\Universal{Q}{\gpb}{}$, where $Q$ is a Tarski--Ol'shanski{\u\i} Monster permutation group and 
$\gpb$ is a finite permutation group of nontrivial degree that is generated by point stabilisers. Furthermore, each such group $\Universal{Q}{\gpb}{}$ under its permutation topology is totally disconnected, locally compact, compactly generated, simple and not discrete.
\end{theorem}

\begin{proof}
Let $Q \leq \sym(\sta)$ be a 
Tarski--Ol'shanski{\u\i} Monster permutation group
and let $\gpb \leq \sym(\stb)$ be a finite permutation group of nontrivial degree $d$ that is generated by point stabilisers.
We think of $Q$ as a topological group under its permutation topology. Note that $Q$ is transitive.
Point stabilisers in $Q$ are finite, so $Q$ is totally disconnected and locally compact, with compact stabilisers. Moreover, $Q$ is finitely generated, so it is compactly generated. 

Let $T$ be the
$(\aleph_0, d)$-biregular tree,
and bipartition the vertices of $T$ into sets $\Va$ and $\Vb$ in the usual way. The group
$\Universal{Q}{\gpb}{}$
is a subgroup of $\aut T$.
Under the permutation topology
it inherits from $\aut T$, the group $\Universal{Q}{\gpb}{}$
is totally disconnected because its action is faithful. It is locally compact by Theorem~\ref{thm:TopoProperties2}, it is compactly generated by Theorem~\ref{thm:tree_compactly_generated}, and it is non-discrete by Theorem~\ref{thm:discrete}.

Any point stabiliser in $Q$ is a maximal subgroup, so $Q$ is generated by point stabilisers; so too is
$\gpb$.
Hence, by Theorem~\ref{theorem:simplicity}, the group
$\Universal{Q}{\gpb}{}$
is simple.

If $Q'$ is another
Tarski--Ol'shanski{\u\i} Monster 
that is not isomorphic to $Q$, then
$\Universal{Q}{\gpb}{}$
and
$\Universal{Q'}{\gpb}{}$
are not isomorphic by Lemma~\ref{lem:subgroup_iso}. The theorem now follows immediately from Theorem~\ref{thm:monster}.
\end{proof}

Recall that $\mathcal{S}$ denotes the class of non-discrete, totally disconnected, locally compact, topologically simple groups. Because all compactly generated groups in $\mathcal{S}$ are Polish groups, and it is well-known that there are precisely $2^{\aleph_0}$ 
many topological isomorphism classes of Polish groups, we have the following corollary. 

\begin{corollary} There are precisely $2^{\aleph_0}$ topological group isomorphism types of compactly generated groups in $\mathcal{S}$.
\qed
\end{corollary}

\begin{remark} \label{remark:construction} Of course, the construction described above works for many groups that are not
Tarski--Ol'shanski{\u\i} Monsters.
The following is a general method for constructing examples of non-discrete
abstractly simple
groups that are totally disconnected, locally compact, and compactly generated.
\begin{enumerate}
\item
Choose $\gpa \leq \sym(\sta)$ to be
closed
transitive and non-regular,
with all point stabilisers compact,
such that $\gpa$ is compactly generated and generated by point stabilisers. Note that we do not require  $M$ to be non-discrete.

Example: Take $M$ be any compactly generated totally disconnected locally compact group and $U$ be a nontrivial compact open subgroup of $M$, properly contained in $M$, such that $M$ is generated by the union of the conjugacy class of $U$ and $U$ has trivial core in $M$. One may then set $X$ to be the coset space $(M:U)$ and view $M$ as a closed subgroup of $\rm{Sym}\,(X)$ via the action by left multiplication. 

Example: Take $\gpa \leq \sym(\sta)$ to be a closed subdegree-finite non-regular primitive permutation group of degree at least three.

\item
	Choose any 
	finite permutation
	group $\gpb \leq \sym(\stb)$ of degree at least two, that is generated by point stabilisers. For example, one could take $\gpb$ be be any finite primitive non-regular permutation group of degree at least three. In the permutation topology, such a group will be totally disconnected and compact.
\item
	Choose any legal colouring $\col$ of the $(|\sta|, |\stb|)$-biregular tree $T$, and let $\Vb$ be
a part of the bipartition of $T$ whose elements have valence $|\stb|$.
\item
	The group
	$\Universal{\gpa}{\gpb}{} \leq \aut T$
	is simple (by Theorem~\ref{theorem:simplicity}), and with the permutation topology it is totally disconnected (because it is faithful), locally compact (by Theorem~\ref{thm:TopoProperties1}), compactly generated (by Theorem~\ref{thm:tree_compactly_generated}) and non-discrete (by Theorem~\ref{thm:discrete}). Point stabilisers
of vertices in $\Vb$
are compact.
\end{enumerate}
\end{remark}

\begin{remark} 
In \cite[Problem 4.3]{willis2007},
George A.~Willis
asks the following. Let $G_1$, $G_2$ be (non-discrete) topologically simple (or simple) totally disconnected locally compact groups. Suppose that there are compact open subgroups $U_i \leq G_i$ ($i = 1, 2$) that are isomorphic. Does it follow that $G_1$ and $G_2$ are isomorphic? It is known (see \cite{BEW} and \cite{CapraceDeMedts}) that the answer to this question is no. However, only countably many pairs $(G_1, G_2)$ demonstrating this have been found. Using the box product, it is easy to construct an uncountable set of such groups $\{G_i : i \in I\}$, in which each group $G_i$ contains a compact open subgroup $U_i \leq G_i$ such that for all $i, j \in I$ the groups $U_i$ and $U_j$ are isomorphic as topological groups but $G_i$ and $G_j$ are non-isomorphic.

Indeed, for two groups
$G:=\Universal{Q}{S_3}{}$ and $H:=\Universal{Q'}{S_3}{}$
taken from the proof of Theorem~\ref{thm:infinitely_many_simple_groups}, any point stabiliser
in $G$ of a vertex $v \in \Vb$
is permutation isomorphic to any point stabiliser in $H$. This is easy to see if you consider the induced actions of G and H on the $(\aleph_0, 3)$-biregular tree $T$.

Thus, if $Q$ and $Q'$ are non-isomorphic, then $G$ and $H$ are non-isomorphic, non-discrete, simple, totally disconnected locally compact groups and moreover $G$ and $H$ have compact open subgroups, $G_v$ and $H_v$ respectively, which are isomorphic as topological groups.
\end{remark} 

\\

{\bf Acknowledgments} The author would like to thank R\"{o}gnvaldur G.~M\"{o}ller
for providing him with an English translation of Tits' paper (\cite{tits}).
The author would also like to thank the anonymous referees for their thoughtful comments and suggestions.

\end{document}